\numberwithin{equation}{section}
\newcommand{\C}{\mathcal{C}}
\newcommand{\D}{\mathcal{D}}
\newcommand{\K}{\mathcal{K}}
\newcommand{\T}{\mathcal{T}}
\newcommand{\U}{\mathcal{U}}
\renewcommand{\mod}{\mathrm{Mod}}
\newcommand{\cov}{\mathrm{Cov}}
\newcommand{\coh}{\mathrm{Coh}}
\newcommand{\op}{\mathrm{Op}}
\newcommand{\rc}{\mathbb{R}\textrm{-}\mathrm{c}}
\newcommand{\CC}{\mathbb{C}}
\newcommand{\R}{\mathbb{R}}
\newcommand{\N}{\mathbb{N}}
\newcommand{\OO}{\mathcal{O}}
\newcommand{\osi}{\stackrel{\sim}{\gets}}
\newcommand{\iso}{\stackrel{\sim}{\to}}
\newcommand{\dbt}{\mathcal{D}\mathit{b}^t}
\newcommand{\db}{\mathcal{D}\mathit{b}}
\newcommand{\ot}{\mathcal{O}^t}
\newcommand{\tho}{\mathit{T}\mathcal{H}\mathit{om}}
\newcommand{\OW}{\OO^\mathrm{w}}
\newcommand{\CW}{\C^{{\infty ,\mathrm{w}}}}
\newcommand{\wtens}{\overset{\mathrm{w}}{\otimes}}
\newcommand{\rh}{\mathit{R}\mathcal{H}\mathit{om}}
\newcommand{\ho}{\mathcal{H}\mathit{om}}
\newcommand{\Ho}{\mathrm{Hom}}
\newcommand{\Rh}{\mathrm{RHom}}
\newcommand{\id}{\mathrm{id}}
\newcommand{\imin}[1]{#1^{-1}}
\newcommand{\lind}[1]{\underset{#1}{\varinjlim}}
\newcommand{\Lind}{\underrightarrow{\lim}}  
\newcommand{\lpro}[1]{\underset{#1}{\varprojlim}}
\newcommand{\exs}[3]{0 \to {#1} \to {#2} \to {#3} \to 0}
\newcommand{\lexs}[3]{0 \to {#1} \to {#2} \to {#3}}
\newcommand{\dt}[3]{{#1} \to {#2} \to {#3} \stackrel{+}{\to}}
\newtheorem{teo}{Theorem}[section]
\newtheorem{cor}[teo]{Corollary}
\newtheorem{oss}[teo]{Remark}
\newtheorem{prop}[teo]{Proposition}
\newtheorem{lem}[teo]{Lemma}
\newtheorem{es}[teo]{Example}
\newtheorem{nt}[teo]{Notations}
\begin{document}
\author{Teresa Monteiro Fernandes and Luca Prelli}
\title{Relative subanalytic sheaves}

\thanks{The research of T.Monteiro Fernandes was supported by
Funda\c c{\~a}o para a Ci{\^e}ncia e Tecnologia, PEst OE/MAT/UI0209/2011.}

\thanks{The research of L. Prelli was supported by Marie Curie grant PIEF-GA-2010-272021.}

\address{}

\address{Teresa Monteiro Fernandes and Luca Prelli\\ Centro de Matem\'atica e Aplica\c{c}\~{o}es Fundamentais e Departamento de Matem\' atica da FCUL, Complexo 2\\ 2 Avenida Prof. Gama Pinto, 1649-003, Lisboa
 Portugal\\
  \textit{tmf@ptmat.fc.ul.pt, lprelli@math.unipd.it}}

\address{}

\subjclass[2010]{18F10, 18F20, 32B20 }

\begin{abstract}
Given a real analytic manifold $Y$, denote by $Y_{sa}$ the associated subanalytic site. Now consider a product $Y=X\times S$. We construct the endofunctor $\mathcal{F}\mapsto \mathcal{F}^{S}$ on the category of sheaves on $Y_{sa}$ and study its properties. Roughly speaking, $\mathcal{F}^S$ is a sheaf on $X_{sa}\times S$. As an application, one can now define sheaves of functions on $Y$ which are tempered or Whitney in the relative sense, that is, only with respect to $X$.\end{abstract}
\maketitle

\tableofcontents

\section*{Introduction}

Let $Y$ be a real analytic manifold. The subanalytic sheaf $\db_Y^t$ of tempered distributions defined by Kashiwara-Schapira (\cite{KS3}) takes its origin in  Kashiwara's  functor $TH$ (\cite{Ka1}) as an essential tool to establish the Riemann-Hilbert correspondance between regular holonomic $\D$-modules and perverse sheaves.

Let  $Y=X\times S$, for some real analytic manifolds $X$ and $S$. In order to study relative perversity (\cite{TS}), it appears that a ``relative'' version of $\dbt_{X \times S}$ is required, i.e. a sheaf $\db^{t,S}_{X \times S}$ such that
$$
\Gamma(U \times V;\db^{t,S}_{X \times S}) \simeq \lpro {W \subset\subset V} \Gamma(U \times W;\dbt_{X \times S}).
$$
In other words, such a sheaf ``forgets'' the growth conditions on $S$.

Let $\mod(\CC_{(X \times S)_{sa}})$ 
be the category 
of subanalytic sheaves on $X \times S$. The aim of this note is to construct a functor  $(\cdot)^{S} : \mod(\CC_{(X \times S)_{sa}})  \to  \mod(\CC_{(X \times S)_{sa}})$ such that, given $F \in \mod(\CC_{(X \times S)_{sa}})$,
\begin{equation}\label{eq}
\Gamma(U \times V;F^S) \simeq \lpro {W \subset\subset V} \Gamma(U \times W;F),
\end{equation}
or, more generally, when $F$ is a bounded complex of subanalytic sheaves and $G$ (resp. $H$) is a bounded complex of $\R$-constructible sheaves on $X$ (resp. $S$), its derived version $(\cdot)^{RS}$ satisfying
\begin{equation}\label{eq}
\rh(G \boxtimes H,F^{RS})  \simeq \rh(\CC_X \boxtimes H,\imin\rho\rh(G \boxtimes \CC_S,F)),
\end{equation}
where $\rho: X \times S \to (X \times S)_{sa}$ is the natural functor of sites.\\

Recall that, by the definition of  $\T$-space introduced in \cite{KS3}  (cf also \cite{EP}), the usual subanalytic site $(X \times S)_{sa}$ can also be regarded as the site $(X \times S)_{\T}$ where $\T$ is the family of all relatively compact subanalytic open subsets. If we consider as $\T'$ the family  of finite unions of open relatively compact subsets of the form $U\times V$, with $U$ subanalytic in $X$ and $V$ subanalytic in $S$, then $X \times S$ becomes a $\T'$-space, and the associated site is the product of sites $X_{sa}\times S_{sa}$. One notes by $\eta$ the morphism of sites $(X\times S)_{sa}\to X_{sa}\times S_{sa}$, by $\rho$ the morphism of sites $X\times S\to (X\times S)_{sa}$ and by $\rho'$ the morphism of sites $X\times S\to X_{sa}\times S_{sa}$.

In this note, to any $\T$-sheaf $F$ (that is, a sheaf on the site associated to $\T$,  or a subanalytic sheaf) we associate canonically  a $\T'$-sheaf $F^{S, \sharp}$ which in some way forgets the dependance of $F$ on the subanalytic factor $S_{sa}$. We then define the relative sheaf $F^S$ as the inverse image by $\eta$ of the $\T'$-sheaf $F^{S, \sharp}$, thus obtaining a subanalytic sheaf on $(X\times S)_{sa}$. This construction leads to a left exact functor $(\cdot)^S$ from the abelian category of subanalytic sheaves on $X\times S$ into itself. Denoting by $(\cdot)^{RS}$ its right derived functor, we prove in Proposition \ref{lem 9} that $(\cdot)^{RS}$ satisfies,
 for $F \in D^b(\CC_{(X \times S)_{sa}})$, $G \in D^b_{\rc}(\CC_X)$ and $H \in D^b_{\rc}(\CC_S)$, natural isomorphisms
\begin{eqnarray*}
\imin \rho \rh(G \boxtimes H,F^{RS}) & \simeq  & \imin \rho \rh(G \boxtimes \rho_!H,F) \\
& \simeq & \rh(\CC_X \boxtimes H,\imin\rho\rh(G \boxtimes \CC_S,F)),
\end{eqnarray*}
In particular, when $G=\CC_X$ and $H=\CC_S$ we have $\imin\rho F \simeq \imin\rho F^{RS}\simeq \rho'^{-1} F^{RS,\sharp}$.\\

We then apply our construction to $\db_{X\times S}^t$  and obtain the subanalytic sheaf $\db_{X\times S}^{t, S}$ of relative tempered distributions with respect to a projection  $f:X\times S\to S$. As the denomination suggests, this is a sheaf on the subanalytic site $(X\times S)_{sa}$, whose sections on open subsets of the form $U\times V$ are  distributions  which extend to  $X\times V$.

The same procedure applies to  construct the subanalytic sheaves  $\C_X^{\infty, t,S}$ of relative tempered $\C^{\infty}$-functions and $\C_X^{\infty,\mathrm{w}, S}$ of relative Whitney $\C^{\infty}$-functions on $X_{sa}$.
 Proposition \ref{lem 9} shows that taking  inverse images on $X\times S$ for the usual topology, we recover respectively the classical sheaves of distributions and $\C^{\infty}$-functions forgetting the relative growth conditions.

When $X$ and $S$ are complex manifolds,  the classical procedure of taking the Dolbeault complex applies to our constructions thus allowing  to define the subanalytic (complexes) ${\mathcal{O}}^{t,S}_{X\times S}$ of relative tempered holomorphic functions and $\OO_{X\times S}^{\mathrm{w}, S}$ of relative Whitney holomorphic functions.\\




As the reader can naturally ask, our method applies only for products of
analytic manifolds (see Remark \ref{controesempio}). We conjecture that with a weaker
notion of subanalytic site as in \cite{GS}, a notion of relative sheaf can be
given for a general smooth function but it will not suit to the
applications we have in scope.

However, the tools we develop here, besides its own interest, are useful to the further understanding of the notion of relative perversity introduced in \cite{TS}.
\\

\section{Complements on subanalytic $\T$-sheaves}\label{S:1}

The results in this section rely on the notion of $\T$-topology. References for details  are made to \cite{KS3} and \cite{EP} from which we keep the notations.


Given a topological space $X$ and a family $\T$ of open subsets of $X$, one says that $X$ is a $\T$-space if $\T$ satisfies the following conditions:
\begin{enumerate}
\item{$\T$ is a basis of the topology of $X$ and $\emptyset\in\T$,}
\item{$\T$ is closed under finite unions and intersections,}
\item{for any $U\in\T$, $U$ has finitely many $\T$-connected components.}
\end{enumerate}
 To $\T$ one associates a Gro\-then\-dieck topology in the following way:
a family $\U=\{U_i\}_i$ in $\T$ is a covering of $U \in \T$ if it admits a finite subcover. One denotes by $X_{\T}$ the associated site and by  $\rho: X \to X_{\T}$  the natural morphism of sites.
There are well defined functors:
\begin{equation}\label{rho}
\xymatrix{\mod(\CC_X)
\ar@ <2pt> [r]^{\mspace{0mu}\rho_*} &
  \mod(\CC_{X_\T}) \ar@ <2pt> [l]^{\mspace{0mu}\imin \rho}. }
\end{equation}

Let us consider the category $\mod(\CC_X)$ of sheaves of
$\CC_X$-modules on $X$, and let us denote by $\K$ the subcategory whose
objects are the sheaves $\oplus_{i \in I} k_{U_i}$ with $I$
finite and $U_i \in \T$ for each $i$. Let $F \in \mod(\CC_X)$.
\begin{itemize}
\item[(i)] $F$ is $\T$-finite if there exists an epimorphism
$G \twoheadrightarrow F$ with $G \in \K$.
\item[(ii)] $F$ is $\T$-pseudo-coherent if for any morphism
$\psi:G \to F$ with $G \in \K$, $\ker \psi$ is $\T$-finite.
\item[(iii)] $F$ is $\T$-coherent if it is both $\T$-finite
and $\T$-pseudo-coherent.
\end{itemize}
Remark that (ii) is equivalent to the same condition with ``$G$ is $\T$-finite" instead of ``$G \in \K$". One denotes by
$\coh(\T)$ the full subcategory of $\mod(k_X)$ consisting of $\T$-coherent sheaves.  $\coh(\T)$ is additive and stable by kernels.

Moreover:
\begin{itemize}
\item{Let $W \in \T$ and let $\CC_{W_\T} \in \mod(\CC_{X_\T})$ be the constant sheaf on $W$. Then $\rho_*\CC_W \simeq \CC_{W_\T}$.}
\item{The functor $\rho_*$ is fully faithful. Moreover its restriction to $\coh(\T)$ is exact.}
\item{A sheaf $F \in \mod(\CC_{X_\T})$ can be seen as a filtrant inductive limit $\lind i \rho_*F_i$ with $F_i \in \coh(\T)$.}
\item{The functors $\Ho(G,\cdot)$, $\ho(G,\cdot)$, with $G \in \coh(\T)$, commute with filtrant $\Lind$.}
\end{itemize}
Finally, recall (cf \cite{EP}) that $F \in \mod(\CC_{X_\T})$ is $\T$-flabby if the restriction morphism $\Gamma(X;F) \to \Gamma(W;F)$ is surjective for each $W \in \T$. $\T$-flabby objects are $\Ho(G,\cdot)$-acyclic for each $G \in \coh(\T)$. \\
Given a real analytic manifold $Y$, let $\op^c(Y_{sa})$ (resp. $\op(Y_{sa})$) denote the family of subanalytic relatively compact open subsets in $Y$ (resp. the family of subanalytic  open subsets in $Y$). Let $Y_{sa}$ denote the associated subanalytic site introduced in \cite{KS3}. The site $Y_{sa}$ is the site $Y_\T$ associated to the family $\T=\op^c(Y_{sa})$  (that is, $Y$ is a $\T$-space and the associated site $Y_{\T}$ coincides with $Y_{sa}$). Accordingly we shall still denote by $\rho$ the natural functor of sites $\rho:Y \to Y_{sa}$  associated to the inclusion $\op(Y_{sa}) \subset \op(Y)$   (without reference to $Y$ unless otherwise specified),  as well as the associated functors $\rho_{*}, \rho^{-1}\,, \rho_!$ introduced in \cite{KS3} (cf also \cite{P}).

 Let us recall the following  facts:
\begin{itemize}
\item{The functor $\rho_!$ is right adjoint to $\imin\rho$. It is fully faithful and exact. Given $F \in \mod(\CC_Y)$, $\rho_!F$ is the sheaf associated to the presheaf $\op(Y_{sa}) \ni U \mapsto F(\overline{U})$.}
\item{The category $\mod_{\rc}(\CC_Y)$ of $\R$-constructible sheaves is $\rho_*$-acyclic (cf \cite{P}).}
\end{itemize}

Let now be given a  sub-family $\T'\subset \op^c(Y_{sa})$   such that $Y$ is still a $\T'$-space.

\begin{itemize}
\item{Denoting by  $Y_{\T'}$  the site   associated to the family $\T'$,  we shall also denote by $\rho':Y\to Y_{\T'}$ the natural functor of sites. A sheaf $F \in \mod(\CC_{Y_{\T'}})$ can be seen as a filtrant inductive limit $\lind i \rho'_*F_i$ with $F_i \in \coh(\T')$.}
\item{We shall denote by $\eta$ the natural functor of sites $Y_{sa}\to Y_{\T'}$.}
\end{itemize}
 One obtains a commutative diagram of sites
\begin{equation} \label{diagram of sites}
\xymatrix{
Y \ar[d]_{\rho'} \ar[r]^\rho & Y_{sa} \ar[dl]^\eta \\
Y_{\T'} &
}
\end{equation}


\begin{oss} One could also consider the site defined by the family of locally finite unions of elements of $\T$ (in the case $\T=\op^c(Y_{sa})$ these are all subanalytic open subsets) and locally finite coverings and make the same construction  using the family $\T'$. Since the associated categories of sheaves are respectively isomorphic to $\mod(\CC_{Y_{\T}})$ and $\mod(\CC_{Y_{\T'}})$ (see Remark 6.3.6 of \cite{KS3})  we will still denote by $Y_\T$ (resp. $Y_{\T'}$) the associated site.
\end{oss}

Let $F$ be a sheaf on $Y_{\T'}$. One defines the (separated) presheaf $\eta^\dagger F$ on $Y_{sa}$ by setting, for $W \in \op(Y_{sa})$,
$$
\eta^\dagger F(W)= \lind {W \subset W'}F(W')
$$
with $W' \in \op(Y_{\T'})$. Let $\imin\eta F$ be the associated sheaf.

\begin{lem}\label{lem: structure of inverse image} Let $F \simeq \lind i\rho'_*F_i \in \mod(\CC_{Y_{\T'}})$ with $F_i \in \coh(\T')$. Then $\imin\eta F \simeq \lind i\rho_*F_i$.
\end{lem}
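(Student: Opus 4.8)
The statement compares two sheaves on $Y_{sa}$, namely $\imin\eta F$ and $\lind i \rho_* F_i$, under the hypothesis $F \simeq \lind i \rho'_* F_i$ with $F_i \in \coh(\T')$. Since both $\imin\eta$ and $\rho_*$ are functors of the appropriate type and $\imin\eta$ commutes with inductive limits (being a left adjoint, via $\eta_*$), it suffices to treat the case of a single $F_i \in \coh(\T')$: I would reduce to proving $\imin\eta(\rho'_* G) \simeq \rho_* G$ for $G \in \coh(\T')$, and then pass to the filtrant colimit on both sides, using that $\rho_*$ — while not exact in general — does commute with filtrant $\Lind$ on the subcategory where things are controlled, or more simply that the colimit $\lind i \rho_* F_i$ is being taken in $\mod(\CC_{Y_{sa}})$ and that $\imin\eta$ preserves it.

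First I would compute the presheaf $\eta^\dagger(\rho'_* G)$ on $W \in \op(Y_{sa})$: by definition it is $\lind{W \subset W'} (\rho'_* G)(W') = \lind{W \subset W'} \Gamma(W'; G)$ where $W'$ ranges over $\op(Y_{\T'})$ (equivalently elements of $\T'$, or their locally finite unions, by the Remark following \eqref{diagram of sites}) containing $W$. The key point is that for $G \in \coh(\T')$ and $W$ a subanalytic relatively compact open set, one has $\Gamma(W; G) \simeq \lind{W \subset W'} \Gamma(W'; G)$ — that is, the sections of a $\T'$-coherent sheaf over an arbitrary subanalytic open $W$ are already the colimit of its sections over $\T'$-open neighborhoods. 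This is where I expect the real work to be, and it should follow from the structure theory of $\coh(\T')$ recalled in Section~\ref{S:1}: a coherent sheaf is built from finitely many pieces $\CC_{U}$ with $U \in \T'$ via kernels and finite direct sums, and for each such generator the identity $\Gamma(W; \CC_U) = \Gamma(W; \CC_{U \cap W})$ together with cofinality of $\T'$-neighborhoods of $W$ (using that $W$ itself, being subanalytic relatively compact, lies in $\op(Y_{sa}) = \T$, and $\T'$-open sets are cofinal among subanalytic open neighborhoods of a relatively compact subanalytic set) gives the claim; the exact-functoriality of $\rho'_*$ on $\coh(\T')$ then propagates it through kernels.

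Granting that, $\eta^\dagger(\rho'_* G)$ is the presheaf $W \mapsto \Gamma(W; G) = \Gamma(W; \rho_* G)$ (the last equality because $\rho_* G$ is by definition the sheaf whose sections over $W \in \op(Y_{sa})$ are $\Gamma(W; G)$), hence $\eta^\dagger(\rho'_* G)$ is already a sheaf isomorphic to $\rho_* G$, so its sheafification $\imin\eta(\rho'_* G)$ is $\rho_* G$ as desired. Finally I would remove the reduction: write $F = \lind i \rho'_* F_i$, note $\eta^\dagger$ commutes with filtrant inductive limits of presheaves and sheafification is exact and commutes with $\Lind$, so $\imin\eta F \simeq \lind i \imin\eta(\rho'_* F_i) \simeq \lind i \rho_* F_i$.

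**Main obstacle.** The crux is the interchange $\Gamma(W; G) \simeq \lind{W \subset W'} \Gamma(W'; G)$ for $G \in \coh(\T')$ and $W$ subanalytic relatively compact open — i.e.\ that coherent $\T'$-sheaves see no difference between $W$ and its $\T'$-neighborhood filter. One must be careful that the colimit is filtrant (guaranteed by stability of $\T'$ under finite intersections and the fact that $Y$ is a $\T'$-space) and that $\T'$-open neighborhoods of the relatively compact subanalytic $\overline W$ are cofinal among all such $W'$; the finite-generation built into $\coh(\T')$ is exactly what makes the colimit stabilize and commute past the kernel defining $G$. Everything else is formal manipulation of sites, adjunctions, and sheafification.
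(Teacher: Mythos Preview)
Your reduction to a single $G \in \coh(\T')$ via commutation of $\imin\eta$ with filtrant $\Lind$ is correct and is exactly how the paper begins. The gap is in your central claim that $\eta^\dagger(\rho'_*G)(W) \simeq \Gamma(W;G)$ for every $W \in \op^c(Y_{sa})$, i.e.\ that the presheaf is already the sheaf $\rho_*G$. This is false. Take $Y=\R\times\R$, so $\T'$ consists of finite unions of open rectangles; let $U=(0,1)^2 \in \T'$, $G=\CC_U$, and $W=(0,1)^2 \setminus \{y=x\}$. Then $W \in \op^c(Y_{sa})$ has two connected components $W_1,W_2$, each contained in $U$, so $\Gamma(W;\CC_U)\simeq\CC^2$. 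But any $W' \in \T'$ with $W \subset W'$ must meet the diagonal: a rectangle $(a,b)\times(c,d)$ avoids $\{y=x\}$ only if $b\le c$ or $d\le a$, hence lies entirely in $\{x<y\}$ or in $\{y<x\}$; a finite union of such rectangles cannot contain the points $(1/2,1/2-\epsilon)$ for all small $\epsilon>0$. Once $W'$ contains a diagonal point it contains a rectangle meeting both $W_1$ and $W_2$, so these lie in the same component of $W'$ and the restriction $\Gamma(W';\CC_U)\to\Gamma(W;\CC_U)=\CC^2$ lands in the diagonal copy of $\CC$. Thus $\lind{W\subset W'}\Gamma(W';\CC_U)\to\Gamma(W;\CC_U)$ is not surjective. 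Your heuristic that ``$\T'$-neighborhoods are cofinal'' does not force the $\T'$-neighborhood filter to detect the component structure of an arbitrary $W\in\T$; sheafification on $Y_{sa}$ is genuinely needed (covering $W$ by $W_1,W_2$ separately recovers $\CC^2$).

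The paper sidesteps this by never computing $\eta^\dagger$. After the same colimit reduction, it uses that $\coh(\T')$ is an abelian subcategory of $\coh(\T)$ on which both $\rho_*$ and $\rho'_*$ are exact; since $\imin\eta$ is exact as well, both functors $\imin\eta\circ\rho'_*$ and $\rho_*$ are exact on $\coh(\T')$, so the comparison (via the counit $\imin\eta\eta_*\rho_*\to\rho_*$) reduces to the generators $\CC_W$, $W\in\T'$. There one has $\rho'_*\CC_W\simeq\CC_{W_{\T'}}$, and the standard fact $\imin\eta\CC_{W_{\T'}}\simeq\CC_{W_\T}$ for morphisms of sites (Proposition 6.3.1 of \cite{KS3}) finishes the argument. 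Your proof can be repaired by making this further reduction and working at the sheafified level rather than at the level of $\eta^\dagger$.
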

\begin{proof} Since the functor of inverse image commutes with $\Lind$ it is enough to check that $\imin\eta\rho'_*F' \simeq \rho_*F'$ with $F' \in \coh(\T')$.

Since (cf \cite{KS3}, Chapter 6) $\coh(\T')$ is an abelian subcategory of $\coh(\T)$ and $\rho_*$ (resp. $\rho'_*$) is exact on $\coh(\T)$ (resp. $\coh(\T')$), we may reduce to the case $F'=\CC_W$, $W \in \T'$.

Let $\CC_{W_\T}$ (resp. $\CC_{W_{\T'}}$) be the constant sheaf on $Y_{sa}$ (resp. $Y_{\T'}$). Then, by Proposition 6.3.1 of \cite{KS3}, (cf also \cite{P})$$
\imin\eta \rho'_*\CC_W \simeq \imin\eta \CC_{W_{\T'}} \simeq \CC_{W_\T} \simeq \rho_*\CC_W.
$$
\end{proof}

\begin{lem}\label{lem: section on product0}
Let $F\in  \mod(\CC_{Y_{\T'}})$. Then, for any  $W \in \op(Y_{\T'})$, $$\Gamma(W;\eta^{-1}F)\simeq \Gamma(W; F).$$
\end{lem}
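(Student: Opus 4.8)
The plan is to reduce the statement to the case of coherent sheaves via the filtrant inductive limit presentation of sheaves on $Y_{\T'}$, and then to invoke Lemma \ref{lem: structure of inverse image}. The naive observation is that, for $W\in\op(Y_{\T'})$, the set $W$ is the smallest element of $\{W'\in\op(Y_{\T'})\mid W\subset W'\}$, so that $\eta^\dagger F(W)\simeq F(W)$ straight from the definition of the presheaf $\eta^\dagger F$; the real content of the lemma is that passing from $\eta^\dagger F$ to the associated sheaf $\eta^{-1}F$ does not change the value at such a $W$. Since the sheafification on $Y_{sa}$ is computed with respect to coverings whose members need not lie in $\T'$, this is not a formality, and I prefer to argue through Lemma \ref{lem: structure of inverse image} rather than directly with $\eta^\dagger F$.

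Concretely, first I would write $F\simeq\lind i\rho'_*F_i$ with $F_i\in\coh(\T')$, as recalled in Section \ref{S:1}. Lemma \ref{lem: structure of inverse image} then yields $\eta^{-1}F\simeq\lind i\rho_*F_i$ in $\mod(\CC_{Y_{sa}})$. Next I would take sections over $W$ on both sides: since $\CC_{W_\T}=\rho_*\CC_W$ belongs to $\coh(\T)$ and $\CC_{W_{\T'}}=\rho'_*\CC_W$ belongs to $\coh(\T')$, the functors $\Gamma(W;\cdot)=\Ho(\CC_{W_\T},\cdot)$ on $\mod(\CC_{Y_{sa}})$ and $\Gamma(W;\cdot)=\Ho(\CC_{W_{\T'}},\cdot)$ on $\mod(\CC_{Y_{\T'}})$ commute with filtrant $\Lind$; moreover $\Gamma(W;\rho_*F_i)\simeq\Gamma(W;F_i)\simeq\Gamma(W;\rho'_*F_i)$ because $W\in\op(Y_{\T'})\subset\op(Y_{sa})$ and $\rho_*$ (resp. $\rho'_*$) sends a sheaf on $Y$ to its restriction to $\op(Y_{sa})$ (resp. $\op(Y_{\T'})$). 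Chaining these isomorphisms gives
\[
\Gamma(W;\eta^{-1}F)\simeq\lind i\Gamma(W;\rho_*F_i)\simeq\lind i\Gamma(W;F_i)\simeq\lind i\Gamma(W;\rho'_*F_i)\simeq\Gamma(W;F),
\]
and all of these isomorphisms are natural in $F$, which is the assertion.

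The step I expect to be the crux is precisely this reduction: everything hinges on the fact that both $\eta^{-1}$ and the sections functors $\Gamma(W;\cdot)$ commute with filtrant $\Lind$ — the former because inverse image of sites does (this is how Lemma \ref{lem: structure of inverse image} itself is obtained), the latter because $\CC_{W_\T}$ and $\CC_{W_{\T'}}$ are coherent. In this way one never has to confront an arbitrary subanalytic covering of $W$, which is the difficulty the colimit argument is designed to avoid; no input from subanalytic geometry beyond what is recalled in Section \ref{S:1} is required.
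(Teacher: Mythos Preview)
Your reduction via Lemma \ref{lem: structure of inverse image} is exactly the paper's strategy, and the chain of isomorphisms you write is correct \emph{when $W$ is relatively compact}. The gap is in the sentence ``since $\CC_{W_\T}=\rho_*\CC_W$ belongs to $\coh(\T)$ and $\CC_{W_{\T'}}=\rho'_*\CC_W$ belongs to $\coh(\T')$''. This is false for general $W\in\op(Y_{\T'})$: by the paper's conventions (see the Remark just before Lemma \ref{lem: structure of inverse image}), $\op(Y_{\T'})$ consists of \emph{locally finite} unions of elements of $\T'$, so $W$ need not be relatively compact. But $\T$-finiteness requires an epimorphism from a finite direct sum $\oplus_{i}\CC_{U_i}$ with each $U_i\in\T$ relatively compact; if $\overline{W}$ is non-compact no such epimorphism exists, so $\CC_W\notin\coh(\T)$ and the commutation of $\Gamma(W;\cdot)$ with filtrant $\Lind$ is not available.

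The paper deals with this by splitting into two cases. For $W$ relatively compact it runs precisely your chain. For arbitrary $W$ it chooses an exhaustion $W=\bigcup_n W_n$ with $W_n=U_n\cap W$ relatively compact in $\T'$, applies the first case to each $W_n$, and then passes to the projective limit $\lpro n$ on both sides (here $\Gamma(W;\cdot)\simeq\lpro n\Gamma(W_n;\cdot)$ since the $W_n$ form a covering). So your argument is the core of the proof, but it needs this extra projective-limit step to cover the non-relatively-compact case; without it the claimed commutation with $\Lind$ is simply not justified.
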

\begin{proof}
 We may write $F \simeq \lind i\rho'_*F_i$ with $F_i \in \coh(\T')$ and by Lemma \ref{lem: structure of inverse image} we have $\imin\eta F \simeq \lind i\rho_*F_i$.

Let us first assume that $W \in \op(Y_{\T'})$ is relatively compact. Then
\begin{eqnarray*}
\Gamma(W;\lind i\rho_*F_i) & \simeq & \lind i\Gamma(W;\rho_*F_i) \\
& \simeq & \lind i\Gamma(W;F_i) \\
& \simeq & \lind i\Gamma(W;\rho'_*F_i) \\
& \simeq & \Gamma(W;\lind i\rho'_*F_i).
\end{eqnarray*}

 Let us consider now an arbitrary $W$.  Then we have $W=\bigcup_nW_n$, with $W_n=U_n  \cap W$, where $\{U_n\}_{n \in \N}$ belongs to $\cov(Y_{\T'})$ and satisfies $U_n\subset\subset U_{n+1}$.
Therefore:
\begin{eqnarray*}
\Gamma(W;\lind i\rho_*F_i) & \simeq & \underset{n}{\varprojlim}\Gamma(W_n; \lind i {\rho_*F_i}) \\
& \simeq & \underset{n}{\varprojlim}\Gamma(W_n; \lind i {\rho'_*F_i}) \\
& \simeq & \Gamma(W;\lind i\rho'_*F_i).
\end{eqnarray*}

\end{proof}

The two following results are straightforward consequences of Lemma \ref{lem: section on product0}:

\begin{cor} The adjunction morphism $\id \to \eta_*\imin\eta$ is an isomorphism. In particular, the functor $\imin\eta:\mod(\CC_{Y_{\T'}}) \to \mod(\CC_{Y_{sa}})$ is fully faithful.
\end{cor}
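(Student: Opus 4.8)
The plan is to evaluate the adjunction (unit) morphism $\alpha_F : F \to \eta_*\imin\eta F$ on sections over the members of the generating family $\T'$, show via Lemma \ref{lem: section on product0} that it is an isomorphism there, and then conclude that it is an isomorphism of sheaves since $\T'$ is a basis of the topology.

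First I would recall that, by the very definition of direct image along the morphism of sites $\eta : Y_{sa} \to Y_{\T'}$, for any $G \in \mod(\CC_{Y_{sa}})$ and any $W \in \op(Y_{\T'}) \subset \op(Y_{sa})$ one has $\Gamma(W;\eta_*G) \simeq \Gamma(W;G)$, the identification being compatible with restriction. Applying this to $G = \imin\eta F$ gives $\Gamma(W;\eta_*\imin\eta F) \simeq \Gamma(W;\imin\eta F)$, and unravelling the construction of $\imin\eta$ (the sheaf associated to the presheaf $\eta^\dagger F(W) = \lind {W \subset W'} F(W')$, with $W' \in \op(Y_{\T'})$) shows that $\alpha_F$ induces on $\Gamma(W;\cdot)$ precisely the canonical morphism $\Gamma(W;F) \to \Gamma(W;\imin\eta F)$; note that for $W \in \op(Y_{\T'})$ the index $W'=W$ already appears in the colimit. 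By Lemma \ref{lem: section on product0} this morphism is an isomorphism for every $W \in \op(Y_{\T'})$, in particular for every $W \in \T'$. Since a morphism of sheaves on $Y_{\T'}$ that is an isomorphism on every object of the generating family $\T'$ is an isomorphism (sheaves on $Y_{\T'}$ are determined by their values on $\T'$, the Grothendieck topology being generated by finite covers by elements of $\T'$), we get that $\alpha_F$ is an isomorphism, functorially in $F$; that is, $\id \to \eta_*\imin\eta$ is an isomorphism of functors.

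For the second assertion I would invoke the standard categorical fact that, given an adjunction $(\imin\eta,\eta_*)$, the left adjoint $\imin\eta$ is fully faithful if and only if the unit $\id \to \eta_*\imin\eta$ is an isomorphism: indeed the triangular identities give, for $F,F' \in \mod(\CC_{Y_{\T'}})$, natural bijections $\Ho(\imin\eta F,\imin\eta F') \simeq \Ho(F,\eta_*\imin\eta F') \simeq \Ho(F,F')$. The only point needing any care — not really an obstacle — is the first step, namely checking that the unit morphism does restrict on sections over $W \in \op(Y_{\T'})$ to the map built from $\eta^\dagger$; this is a routine unravelling of the definitions of $\imin\eta$ (sheafification of $\eta^\dagger F$) and of $\eta_*$. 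Everything else is formal.
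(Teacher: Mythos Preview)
Your proposal is correct and follows exactly the approach the paper intends: the paper gives no proof beyond declaring the corollary a ``straightforward consequence'' of Lemma \ref{lem: section on product0}, and what you have written is precisely the routine unpacking of that consequence (identify $\Gamma(W;\eta_*\imin\eta F)$ with $\Gamma(W;\imin\eta F)$ by definition of $\eta_*$, invoke the lemma, then use the standard unit/fully-faithful equivalence for adjunctions). There is nothing to add or correct.
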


\begin{cor} Let $W \in \T'$ and let $\CC_{W_\T}$ (resp. $\CC_{W_{\T'}}$) be the constant sheaf on $Y_{sa}$ (resp. $Y_{\T'}$). Then
$
\CC_{W_{\T'}} \simeq \eta_*\CC_{W_\T}.
$
\end{cor}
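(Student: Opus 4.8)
The plan is to obtain this isomorphism as a formal consequence of the two facts just established: Lemma~\ref{lem: structure of inverse image}, which computes $\imin\eta$ on sheaves presented as $\lind i \rho'_*F_i$ with $F_i \in \coh(\T')$, and the preceding corollary, which states that the adjunction morphism $\id \to \eta_*\imin\eta$ is an isomorphism. So no new computation is really needed; one only has to assemble these.

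First I would note that, since $W \in \T'$, the sheaf $\CC_W \in \mod(\CC_Y)$ (constant on $W$, extended by zero) belongs to $\coh(\T')$, so that $\CC_{W_{\T'}} \simeq \rho'_*\CC_W$ is already of the form to which Lemma~\ref{lem: structure of inverse image} applies (a one-term filtrant inductive limit). That lemma then yields
$$
\imin\eta\,\CC_{W_{\T'}} \;\simeq\; \rho_*\CC_W \;\simeq\; \CC_{W_\T},
$$
the last identification being the recalled fact that $\rho_*\CC_W$ is the constant sheaf on $W$ in $\mod(\CC_{Y_{sa}})$. Applying $\eta_*$ to this isomorphism gives $\eta_*\CC_{W_\T} \simeq \eta_*\imin\eta\,\CC_{W_{\T'}}$, and the preceding corollary identifies the right-hand side with $\CC_{W_{\T'}}$ via the adjunction morphism. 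Composing these isomorphisms is exactly the assertion.

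I do not expect any genuine obstacle here; the only point worth a line is the elementary remark that constant sheaves supported on elements of $\T'$ are $\T'$-coherent (hence lie in $\coh(\T')$), which is what lets Lemma~\ref{lem: structure of inverse image} engage. If one prefers an argument at the level of sections rather than invoking the adjunction corollary, one can instead check the isomorphism on each $W' \in \op(Y_{\T'})$: writing $\CC_{W_\T} \simeq \imin\eta\,\CC_{W_{\T'}}$ as above, Lemma~\ref{lem: section on product0} gives $\Gamma(W';\eta_*\CC_{W_\T}) \simeq \Gamma(W';\CC_{W_\T}) \simeq \Gamma(W';\CC_{W_{\T'}})$, compatibly with restrictions, which again identifies the two sheaves on $Y_{\T'}$.
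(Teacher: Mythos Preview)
Your proposal is correct and matches the paper's approach. The paper states this corollary as a straightforward consequence of Lemma~\ref{lem: section on product0}, and both routes you outline---via the adjunction isomorphism of the preceding corollary, or directly at the level of sections using Lemma~\ref{lem: section on product0}---are valid realizations of exactly that.
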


Let $\mathcal{I}$ be the subcategory of $\mod(\CC_Y)$ consisting of finite sums $\oplus_i\CC_{W_i}$ with $W_i \in \T'$ connected.

\begin{lem} \label{lem: finite res} Let $F,G \in \mathcal{I}$. Then, given $\varphi:F \to G$, we have $\ker\varphi \in \mathcal{I}$.
\end{lem}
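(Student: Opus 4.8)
The plan is to reduce the statement to a purely combinatorial computation of kernels of ``sum morphisms'' and then argue by induction. The first ingredient is an elementary description of morphisms between the indecomposable generators of $\mathcal{I}$: if $W,W'\in\T'$ with $W$ connected, then $\Ho(\CC_W,\CC_{W'})=\CC$ when $W\subseteq W'$ and $\Ho(\CC_W,\CC_{W'})=0$ otherwise, every nonzero morphism being a scalar multiple of the canonical inclusion $\CC_W\hookrightarrow\CC_{W'}$, which is a monomorphism. Indeed a morphism $\CC_W\to\CC_{W'}$ is a section of $\CC_{W'}$ over $W$, i.e. a locally constant function on $W\cap W'$ whose nonvanishing locus is closed in $W$; connectedness of $W$ forces that locus to be $\emptyset$ or all of $W$, whence $W\subseteq W'$ and the morphism is a scalar times the inclusion.

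Granting the case of a single target summand, the general case follows by iterated kernels: for $\varphi\colon F\to\bigoplus_{j=1}^m\CC_{V_j}$ one has $\ker\varphi=\bigcap_j\ker(\mathrm{pr}_j\circ\varphi)$, computed stepwise via $K_0=F$, $K_j=\ker(K_{j-1}\to\CC_{V_j})$; assuming inductively $K_{j-1}\in\mathcal{I}$, each step is the kernel of a morphism from an object of $\mathcal{I}$ to a single connected $\CC_{V_j}$. For such a $\psi\colon\bigoplus_{i=1}^n\CC_{W_i}\to\CC_V$ the component $\psi_i\colon\CC_{W_i}\to\CC_V$ vanishes unless $W_i\subseteq V$, and after rescaling generators the nonzero components are the canonical inclusions; hence $\ker\psi=\bigl(\bigoplus_{W_i\not\subseteq V}\CC_{W_i}\bigr)\oplus\ker\bigl(\bigoplus_{W_i\subseteq V}\CC_{W_i}\xrightarrow{\mathrm{sum}}\CC_V\bigr)$, the first summand already lying in $\mathcal{I}$. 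So everything reduces to showing $\ker(\bigoplus_{i=1}^k\CC_{W_i}\xrightarrow{\mathrm{sum}}\CC_V)\in\mathcal{I}$ when all $W_i\subseteq V$; observe in passing that the image of this sum morphism is $\CC_{W_1\cup\cdots\cup W_k}\in\mathcal{I}$, as $\T'$ is stable under finite unions.

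I would prove this last assertion by induction on $k$. Splitting off the last summand and using that $\CC_{W_k}\hookrightarrow\CC_V$ is a monomorphism, the projection onto $F'=\bigoplus_{i<k}\CC_{W_i}$ identifies $\ker\varphi$ with the subsheaf $\{(s_i)_{i<k}:\ \sum_{i<k}s_i\text{ is a section of }\CC_{W_k}\}$ of $F'$; a verification on stalks then shows that this subsheaf equals $A+B$ inside $F'$, where $A=\bigoplus_{i<k}\CC_{W_i\cap W_k}$ lies in $\mathcal{I}$ (because $\T'$ is stable under intersection, each $W_i\cap W_k$ being broken into its finitely many connected components, which again lie in $\T'$), and $B=\ker(\bigoplus_{i<k}\CC_{W_i}\xrightarrow{\mathrm{sum}}\CC_V)\in\mathcal{I}$ by the inductive hypothesis; similarly $A\cap B=\ker(\bigoplus_{i<k}\CC_{W_i\cap W_k}\xrightarrow{\mathrm{sum}}\CC_V)$ lies in $\mathcal{I}$ by the inductive hypothesis (fewer summands).

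The main obstacle is the concluding step: one must deduce that $A+B$ — the image of $A\oplus B\to F'$, hence a sheaf trapped between $A\oplus B\in\mathcal{I}$, which surjects onto it, and $F'\in\mathcal{I}$, into which it injects — itself belongs to $\mathcal{I}$. This is not formal, since $\mathcal{I}$ is far from abelian: it is not idempotent-complete, and it is stable neither under cokernels nor under arbitrary subobjects. I expect to settle it by fixing a subanalytic stratification of $Y$ adapted to all the $W_i$ and exploiting the rigidity of $\ker\varphi$ as a subsheaf of $\bigoplus_i\CC_{W_i}$: it is constructible, and its generization morphisms $\ker\varphi_x\to\ker\varphi_{x'}$ (for $x'$ a nearby, more generic point, $x$ in the closure of the stratum of $x'$) are injective and are precisely the inclusions $\ker(\CC^{I(x)}\xrightarrow{\mathrm{sum}}\CC)\hookrightarrow\ker(\CC^{I(x')}\xrightarrow{\mathrm{sum}}\CC)$, where $I(x)=\{i:x\in W_i\}$. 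This rigidity should force $\ker\varphi$ to be a finite direct sum of constant sheaves on open unions of strata belonging to $\T'$, completing the induction.
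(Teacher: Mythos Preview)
Your reduction to a single target summand $G=\CC_W$, and thence to the sum morphism with all $W_i\subseteq W$, matches the paper's opening move. After that the paper takes a shorter, non-inductive route: it observes at the stalk level that if $s=(s_1,\dots,s_l)\in(\ker\varphi)_y$ has $v_is_i\neq 0$ then the tuple $(s_{i'})_{i'\neq i}$ is a germ of $H_i:=\bigoplus_{i'\neq i}\CC_{W_{i'}\cap W_i}$ (since $y\in W_i$, and each nonzero $s_{i'}$ forces $y\in W_{i'}$), and concludes in one line that $\ker\varphi\simeq\bigoplus_{i=1}^l H_i$. So there is no induction on $k$, no $A+B$ decomposition, and no stratification argument in the paper. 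One should note, however, that the paper's final isomorphism is not literally correct as stated: at a point lying in exactly $m$ of the $W_i$ the kernel has rank $m-1$ while $\bigoplus_i H_i$ has rank $m(m-1)$. What the stalk argument actually proves is that $\ker\varphi|_{W_i}\simeq H_i$ for each $i$; the paper's proof is therefore at best elliptical, and the passage from these local identifications to a global direct-sum description is precisely the difficulty you isolated.

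Your own argument has a genuine gap at that same point. You correctly identify the projected kernel as $A+B$ with $A,B,A\cap B\in\mathcal{I}$, but deducing $A+B\in\mathcal{I}$ is not formal, and your proposed stratification fix is only a sketch: that a constructible sheaf with injective generization maps decomposes as a sum of constant sheaves on opens is a nontrivial representation-theoretic statement about the incidence poset (it can fail over rings, and even over a field needs an argument), and you would still have to verify that the resulting opens lie in $\T'$. As a reality check, take $k=3$ with $W_1=(0,2)$, $W_2=(1,3)$, $W_3=(0,3)\subset\R$: the exact sequence $0\to K_{k-1}\to K_k\to\CC_{W_k\cap(W_1\cup\cdots\cup W_{k-1})}\to 0$ underlying your induction does \emph{not} split, yet $K_3\simeq\CC_{W_1\cap W_3}\oplus\CC_{W_2\cap W_3}$ via the sections $(1,0,-1)$ and $(0,1,-1)$ --- a decomposition that neither your $A+B$ analysis nor the paper's $\bigoplus_i H_i$ produces directly. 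So the lemma appears to hold, but neither your approach nor the paper's actually closes the argument as written.
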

\begin{proof} We have $F=\oplus_{i=1}^l\CC_{W_i}$, $G=\oplus_{j=1}^k\CC_{W'_j}$. Composing with the projection $p_j,\, j=1,..., k$ on each factor of $G$, $\ker\varphi$ will be the intersection of the $\ker p_j\circ \varphi$ so that, if each one has the desired form, the same will happen to their intersection.  Therefore it  is sufficient to assume $k=1$, let us say, $G=\CC_{W}$. A morphism $\varphi:F \to G$ is then defined by a sequence $v=(v_{1},\dots,v_{l})$, where $v_i$ is the image by $\varphi$ of the section of $\CC_{W_i}$ defined by $1$ on $W_i$, so $v_{i}=0$ if $W_i\not\subset W$. More precisely, if $s=(s_1,...,s_l)$ is a germ  of $F$ in $y$, we have $\varphi(s_1,..., s_l)=\sum_{i=1}^l{v_i}_{y}{s_i}$. So, given $s=(s_1, ..., s_l)\in\ker\varphi$,  if, for  a given $i$, we have ${v_i}_{y} s_i\neq 0$, then $s$ defines a germ of  $H_i=:\oplus_{i' \neq i}\CC_{W_{i'}\cap W_{i}}$ in $y$.


Accordingly, $\ker\varphi\simeq\oplus_{i=1}^l H_i$.
\end{proof}

Therefore, according to the definition of $\coh(\T')$ and to Lemma \ref{lem: finite res}, any  $F\in\coh(\T')$ admits a finite resolution
$$
I^{\bullet}:=0 \to I_1 \to \cdots \to I_n \to F \to 0
$$
consisting of objects belonging to $\mathcal{I}$.

\begin{lem} \label{lem: coh acyclic} Let us suppose that, for any $U\in\T'$, $\CC_U$ is  $\rho'_*$-acyclic. Then:
\begin{enumerate}

\item{for any
$F \in \coh(\T')$, $F$ is $\rho'_*$-acyclic or, equivalently, $\rho_*F$ is $\eta_*$-acyclic.}
\item{ Let $F \in D^b(\CC_{Y_{\T'}})$. Then $R\Gamma(W;\imin\eta F) \simeq R\Gamma(W;F)$ for each $ W\in \op(Y_{\T'})$. }
\end{enumerate}

\end{lem}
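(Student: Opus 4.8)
The plan is to handle the two assertions separately, in each case reducing to $\coh(\T')$, where the hypothesis of the lemma enters through the finite resolutions by objects of $\mathcal{I}$ recorded above, and then propagating acyclicity --- first along those finite resolutions, and for (2) along filtrant inductive limits.

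For (1), I would start from a finite resolution $0 \to I_1 \to \cdots \to I_n \to F \to 0$ with $I_j \in \mathcal{I}$, as furnished by Lemma~\ref{lem: finite res}. Each $I_j$ is a finite direct sum of sheaves $\CC_U$ with $U \in \T'$, hence is $\rho'_*$-acyclic by hypothesis (a finite direct sum of $\rho'_*$-acyclic sheaves is again $\rho'_*$-acyclic). Splitting the resolution into short exact sequences and running through the long exact sequences of $R^\bullet\rho'_*$ --- an induction on $n$ starting from the left-hand term --- then shows that $F$ itself is $\rho'_*$-acyclic. For the asserted equivalence, I would invoke the commutative diagram of sites~\eqref{diagram of sites}: it gives $\rho' = \eta\circ\rho$, hence $\rho'_* = \eta_*\circ\rho_*$, and since $\imin\rho$ is exact, $\rho_*$ preserves injectives and $R\rho'_* \simeq R\eta_*\circ R\rho_*$. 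Moreover each $\CC_U$ with $U \in \T' \subset \op^c(Y_{sa})$ is $\R$-constructible, hence $\rho_*$-acyclic, so the very same resolution gives $R\rho_*F \simeq \rho_*F$ for $F \in \coh(\T')$. Combining these, $R^i\rho'_*F \simeq R^i\eta_*(\rho_*F)$ for all $i$, so $F$ is $\rho'_*$-acyclic precisely when $\rho_*F$ is $\eta_*$-acyclic.

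For (2), since $\imin\eta$ is exact I would first aim at the isomorphism $R\eta_*\imin\eta F \simeq F$. Writing a sheaf $F$ as $F \simeq \lind i\rho'_*F_i$ with $F_i \in \coh(\T')$, Lemma~\ref{lem: structure of inverse image} gives $\imin\eta F \simeq \lind i\rho_*F_i$. By part (1) each $\rho_*F_i$ is $\eta_*$-acyclic, and $R\eta_*$ commutes with filtrant inductive limits on these sites, so $R^k\eta_*(\imin\eta F) \simeq \lind i R^k\eta_*(\rho_*F_i) = 0$ for $k > 0$; combined with the isomorphism $\id\iso\eta_*\imin\eta$ obtained above, this yields $R\eta_*\imin\eta F \simeq F$ for every sheaf $F$, and then, by truncation and induction on the cohomological amplitude, for every $F \in D^b(\CC_{Y_{\T'}})$. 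Finally, for $W \in \op(Y_{\T'})$ one has $\Gamma(W;\eta_*G) \simeq \Gamma(W;G)$ for every $G \in \mod(\CC_{Y_{sa}})$ directly from the definition of $\eta_*$, and $\eta_*$ preserves injectives, so $R\Gamma(W;G) \simeq R\Gamma(W;R\eta_*G)$ for $G \in D^+(\CC_{Y_{sa}})$; applying this to $G = \imin\eta F$ gives $R\Gamma(W;\imin\eta F) \simeq R\Gamma(W;R\eta_*\imin\eta F) \simeq R\Gamma(W;F)$.

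I expect the one genuinely delicate point to be the commutation of $R\eta_*$ with filtrant inductive limits --- together with, for $W$ not relatively compact, the closely related interchange hidden in the last step, where $\Gamma(W;\cdot)$ has to be computed over an exhaustion $W = \bigcup_n W_n$ as in the proof of Lemma~\ref{lem: section on product0}, with the associated derived inverse limits controlled by a Mittag--Leffler argument. Both ultimately rest on the finiteness of the coverings in the $\T$-topology, which makes cohomology computable by finite \v{C}ech complexes; granting that, the remaining steps are formal.
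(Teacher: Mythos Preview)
Your argument for (1) is essentially the paper's: both use the finite resolution by objects of $\mathcal{I}$ coming from Lemma~\ref{lem: finite res} and propagate acyclicity through the successive quotients, and both derive the equivalence from $R\rho'_*\simeq R\eta_*\circ R\rho_*$ together with $\rho_*$-acyclicity of $\R$-constructible sheaves.

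For (2) your route is genuinely different in organization. You first establish $R\eta_*\imin\eta\simeq\id$ (via commutation of $R\eta_*$ with filtrant $\Lind$ and the underived isomorphism $\id\iso\eta_*\imin\eta$), and then deduce the $R\Gamma(W;\cdot)$ statement from the factorization $R\Gamma_{Y_{sa}}(W;\cdot)\simeq R\Gamma_{Y_{\T'}}(W;\cdot)\circ R\eta_*$. The paper proceeds in the opposite direction: it picks an inductive system of injective resolutions $I_i^\bullet$ of the $F_i$, so that $\lind i\rho_*I_i^\bullet$ and $\lind i\rho'_*I_i^\bullet$ are respectively $\T$-flabby and $\T'$-flabby (hence $\Gamma(W;\cdot)$-acyclic), computes $R\Gamma(W;\cdot)$ directly on these resolutions, and invokes Lemma~\ref{lem: section on product0} for the comparison; the isomorphism $\id\iso R\eta_*\imin\eta$ is then recorded afterwards as a corollary. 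The advantage of the paper's approach is that the delicate point you single out --- commuting derived functors with filtrant $\Lind$, and handling non relatively compact $W$ --- is absorbed into the single statement that filtrant limits of pushed-forward injectives are $\T$-flabby (a fact from \cite{EP}), so no separate Mittag--Leffler or \v{C}ech argument is needed. Your approach is cleaner categorically and yields the corollary first, but to make the commutation rigorous you would in effect reproduce the same flabby-resolution argument.
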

\begin{proof}
$(1)$ The equivalence of the two assertion follows from the fact that $R\rho'_*=R\eta_* \circ R\rho_*$ and $\R$-constructible sheaves (and hence $\T'$-coherent sheaves) are $\rho_*$-acyclic.

Note that the assumption entails that any quotient  $I_1/I_2$ of elements of $\mathcal{I}$  is $\rho_*'$-acyclic, so $F$ is $\rho_*'$-acyclic.


(2) Let $F \in D^b(\CC_{Y_{\T'}})$. By d\'evissage, we may reduce to $F \in \mod(\CC_{Y_{\T'}})$ and we can write $\imin\eta F \simeq \lind i\rho_*F_i$, with $F_i\in\coh({\T'})$.  There exists (see \cite{KS4},  Corollary 9.6.7) an inductive system of injective resolutions $I^\bullet_i$ of $F_i$. By $(1)$ (resp. Lemma 2.1.1 of  \cite{P}) $F_i$ is $\rho'_*$-acyclic (resp. $\rho_*$-acyclic),  hence $\rho'_*I^\bullet_i$ (resp. $\rho_*I^\bullet_i$) is an injective resolution of $\rho'_*F_i$ (resp. $\rho_*F_i$). Then, with the notations of \cite{EP}, $\lind
i\rho'_*I_i^\bullet$ (resp. $\lind
i\rho_*I_i^\bullet$) is a $\T'$-flabby (resp. $\T$-flabby) resolution of $\lind i \rho'_*F_i$ (resp. $\lind i \rho_*F_i$) and hence $\Gamma(W;\cdot)$-acyclic. We
have
\begin{eqnarray*}
R\Gamma(W;\lind i\rho_*F_i) & \simeq & \Gamma(W;\lind i\rho_*I^\bullet_i) \\
& \simeq & \Gamma(W;\lind i\rho'_*I^\bullet_i) \\
& \simeq & R\Gamma(W;\lind i\rho'_*F_i),
\end{eqnarray*}
where the second isomorphism follows from Lemma \ref{lem: section on product0}. \end{proof}

The following two results are straightforward consequences of Lemma \ref{lem: coh acyclic}:

\begin{cor} Under the assumption of Lemma \ref{lem: coh acyclic}, the adjunction morphism $\id \to R\eta_*\imin\eta$ is an isomorphism. In particular, the functor $\imin\eta:D^b(\CC_{Y_{\T'}}) \to D^b(\CC_{Y_{sa}})$ is fully      faithful.
\end{cor}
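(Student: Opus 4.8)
The statement is a formal consequence of Lemma \ref{lem: coh acyclic}(2), so the plan is to reduce it to that computation of derived sections. First, since $\imin\eta$ is exact, $R\eta_*$ is its right adjoint on the bounded derived categories, and the fully faithfulness asserted in the second sentence is equivalent to the first one — that the adjunction unit $\id \to R\eta_*\imin\eta$ is an isomorphism. Hence I would only prove that: for $F \in D^b(\CC_{Y_{\T'}})$ the canonical morphism $F \to R\eta_*\imin\eta F$ is an isomorphism in $D^b(\CC_{Y_{\T'}})$.

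To check this I would test the morphism on derived sections over a generating family for the site $Y_{\T'}$. Since $\T'$ is a basis of the topology closed under finite intersections, the sheaves $\CC_W$ with $W \in \T'$ form a generating family; consequently a morphism in $D^b(\CC_{Y_{\T'}})$ whose cone $C$ satisfies $R\Gamma(W;C)\simeq 0$ for every $W \in \T'$ has vanishing cohomology sheaves (look at the lowest non-vanishing one), hence is an isomorphism. So it suffices to prove that $R\Gamma(W;F) \to R\Gamma(W;R\eta_*\imin\eta F)$ is an isomorphism for each $W \in \T'$.

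For the target I would argue as follows. The functor $\eta_*$ has the exact left adjoint $\imin\eta$, so it sends injectives to injectives; moreover $\Gamma(W;\eta_*(\cdot)) = \Gamma(W;\cdot)$ for $W \in \T' \subset \op(Y_{sa})$, because $\eta$ is the morphism of sites attached to the inclusion $\T' \subset \T$. Combining these, $R\Gamma(W;R\eta_*\imin\eta F) \simeq R\Gamma(W;\imin\eta F)$, and then Lemma \ref{lem: coh acyclic}(2) (whose hypothesis is the one in force here) gives $R\Gamma(W;\imin\eta F) \simeq R\Gamma(W;F)$ for $W \in \op(Y_{\T'})$, in particular for $W \in \T'$.

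The one delicate point — and the main obstacle — is to verify that the isomorphism $R\Gamma(W;R\eta_*\imin\eta F)\simeq R\Gamma(W;F)$ just obtained is the one induced by the adjunction unit, and not merely some abstract identification. For this I would go back to a presentation $F \simeq \lind i \rho'_*F_i$ with $F_i \in \coh(\T')$: since $\rho'=\eta\circ\rho$ as morphisms of sites one has $\eta_*\rho_*=\rho'_*$, while Lemma \ref{lem: structure of inverse image} gives $\imin\eta\rho'_*F_i \simeq \rho_*F_i$, so the unit $\rho'_*F_i \to \eta_*\imin\eta\rho'_*F_i$ is canonically the identity on each coherent piece; passing to the filtrant colimit and then deriving, the restriction of the unit to each $R\Gamma(W;-)$ is exactly the isomorphism supplied in the proof of Lemma \ref{lem: coh acyclic}(2) (which rests on Lemma \ref{lem: section on product0}). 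This closes the argument; note that the non-derived statement $\id \to \eta_*\imin\eta$ is the Corollary already recorded after Lemma \ref{lem: section on product0}.
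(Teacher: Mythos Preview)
Your proof is correct and is exactly the deduction the paper intends: the paper gives no explicit argument, simply declaring the corollary a straightforward consequence of Lemma~\ref{lem: coh acyclic}, and your approach---testing the unit on $R\Gamma(W;\cdot)$ for $W\in\T'$ via $R\Gamma(W;R\eta_*\imin\eta F)\simeq R\Gamma(W;\imin\eta F)\simeq R\Gamma(W;F)$---is precisely how one unpacks this. Your extra verification that the resulting isomorphism is the one induced by the adjunction unit is a reasonable point of care, and your argument via the presentation $F\simeq\lind i\rho'_*F_i$ and Lemma~\ref{lem: structure of inverse image} handles it correctly.
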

Note that  Remark \ref{oss:T} in next section provides an example showing that  the converse $\imin\eta R\eta_*\to\id $ is not in general an isomorphism.
\begin{cor} Assume the conditions of Lemma \ref{lem: coh acyclic}.
Let $W \in \T'$ and let $\CC_{W_\T}$ (resp. $\CC_{W_{\T'}}$) be the constant sheaf on $Y_{sa}$ (resp. $Y_{\T'}$). Then
$
\CC_{W_{\T'}} \simeq R\eta_*\CC_{W_\T}.
$
\end{cor}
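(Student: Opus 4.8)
The plan is to deduce the isomorphism directly from Lemma \ref{lem: coh acyclic}(1) together with the factorization of the push-forward functors supplied by the commutative diagram of sites (\ref{diagram of sites}). Recall first that, since $W\in\T'\subset\T=\op^c(Y_{sa})$, the constant sheaf $\CC_{W_\T}$ on $Y_{sa}$ is $\rho_*\CC_W$ and the constant sheaf $\CC_{W_{\T'}}$ on $Y_{\T'}$ is $\rho'_*\CC_W$ (these are the identifications recalled at the beginning of Section \ref{S:1}, legitimate precisely because $W$ belongs to both families $\T$ and $\T'$). Moreover $\CC_W$ is $\R$-constructible, hence $\rho_*$-acyclic, so that $R\rho_*\CC_W\simeq\rho_*\CC_W\simeq\CC_{W_\T}$; and $\CC_W$ is $\T'$-coherent, so under the standing hypothesis Lemma \ref{lem: coh acyclic}(1) says that $\CC_W$ is $\rho'_*$-acyclic, that is, $R\rho'_*\CC_W\simeq\rho'_*\CC_W\simeq\CC_{W_{\T'}}$.

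Now since $\rho'=\eta\circ\rho$ one has the Grothendieck composition formula $R\rho'_*\simeq R\eta_*\circ R\rho_*$ (already invoked in the proof of Lemma \ref{lem: coh acyclic}(1)). Chaining the three observations above,
\[
R\eta_*\CC_{W_\T}\simeq R\eta_*R\rho_*\CC_W\simeq R\rho'_*\CC_W\simeq\CC_{W_{\T'}},
\]
which is the asserted isomorphism. Equivalently, one could argue through the Corollary immediately preceding the statement: by Lemma \ref{lem: structure of inverse image} we have $\imin\eta\CC_{W_{\T'}}\simeq\imin\eta\rho'_*\CC_W\simeq\rho_*\CC_W\simeq\CC_{W_\T}$, and then evaluating the isomorphism $\id\iso R\eta_*\imin\eta$ on $\CC_{W_{\T'}}$ yields $\CC_{W_{\T'}}\iso R\eta_*\imin\eta\CC_{W_{\T'}}\simeq R\eta_*\CC_{W_\T}$.

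I do not expect any genuine obstacle here: all the homological substance has already been absorbed into Lemma \ref{lem: coh acyclic}. The only points deserving a line of care are the two identifications $\CC_{W_\T}\simeq\rho_*\CC_W$ and $\CC_{W_{\T'}}\simeq\rho'_*\CC_W$, and the remark that $\CC_W$ lies in $\coh(\T')$ (so that Lemma \ref{lem: coh acyclic}(1) applies to it); both are routine within the $\T$-topology formalism of \cite{KS3}.
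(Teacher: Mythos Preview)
Your proposal is correct and matches the paper's intent: the paper simply declares the corollary a ``straightforward consequence of Lemma \ref{lem: coh acyclic}'' without writing out a proof, and both of your routes (via the factorization $R\rho'_*\simeq R\eta_*\circ R\rho_*$ applied to $\CC_W$, or via the preceding corollary $\id\iso R\eta_*\imin\eta$ evaluated on $\CC_{W_{\T'}}$) are exactly the kind of one-line deductions the authors have in mind. If anything, the very shortest version is to note that Lemma \ref{lem: coh acyclic}(1) says $\rho_*\CC_W=\CC_{W_\T}$ is $\eta_*$-acyclic, so $R\eta_*\CC_{W_\T}\simeq\eta_*\CC_{W_\T}$, and the latter is $\CC_{W_{\T'}}$ by the earlier (non-derived) corollary.
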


As a consequence of Lemma \ref{lem: structure of inverse image} we obtain:

\begin{cor}\label{C:25}
Assume the conditions of Lemma \ref{lem: coh acyclic}.
Let $F \in D^b(\coh(\T'))$. Then
$\imin\eta R\rho'_*F \iso R\rho_*F.$
\end{cor}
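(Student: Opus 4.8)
The plan is to reduce the statement $\imin\eta R\rho'_*F \iso R\rho_*F$ for $F \in D^b(\coh(\T'))$ to the already-established Lemma \ref{lem: structure of inverse image}, using the acyclicity results gathered in Lemma \ref{lem: coh acyclic}. First I would observe that both sides are way-out functors, so by d\'evissage (the standard truncation argument, using that $\coh(\T')$ is abelian and stable by kernels) it suffices to treat the case $F \in \coh(\T')$. So assume $F \in \coh(\T')$.

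Next I would compute $R\rho'_*F$. By Lemma \ref{lem: coh acyclic}(1), under the standing assumption that $\CC_U$ is $\rho'_*$-acyclic for every $U \in \T'$, the object $F$ is itself $\rho'_*$-acyclic, hence $R\rho'_*F \simeq \rho'_*F$, which lives in $\mod(\CC_{Y_{\T'}}) \subset \coh(\T')$-image and in particular is (up to the identification via $\rho'_*$ being fully faithful and exact on $\coh(\T')$) just $F$ regarded as a $\T'$-sheaf. Similarly, $\R$-constructible sheaves — and hence $\T'$-coherent sheaves — are $\rho_*$-acyclic (recalled in the excerpt, cf.\ \cite{P}), so $R\rho_*F \simeq \rho_*F$. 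Thus both derived pushforwards collapse to ordinary pushforwards, and the claim becomes the non-derived identity $\imin\eta \rho'_*F \simeq \rho_*F$ — but this is precisely Lemma \ref{lem: structure of inverse image} applied to $F \simeq \lind i \rho'_*F_i$ with the trivial one-term inductive system $F_i = F$. (Alternatively, invoke Lemma \ref{lem: structure of inverse image} after writing $F$ as a constant inductive limit; the content is the same.)

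The one point that needs a little care — and the place I expect the main subtlety — is making sure the equivalence $R\rho'_*F \simeq \rho'_*F$ is respected by $\imin\eta$ as a functor between derived categories, i.e.\ that one may genuinely replace $R\rho'_*$ by $\rho'_*$ \emph{inside} the composite $\imin\eta \circ R\rho'_*$. This is fine because $\imin\eta$ is exact (it is an inverse image functor), so it sends the quasi-isomorphism $\rho'_*F \to R\rho'_*F$ to a quasi-isomorphism; concretely, choosing an injective resolution $F \to I^\bullet$ in $\mod(\CC_{Y_{\T'}})$, the terms $\rho'_*I^j$ are $\eta_*$-acyclic by Lemma \ref{lem: coh acyclic}(1) only after checking the resolution stays inside the acyclic class — but since $F$ is $\rho'_*$-acyclic and $\coh(\T')$ admits the finite $\mathcal{I}$-resolutions from Lemma \ref{lem: finite res}, one reduces the acyclicity bookkeeping to objects $\CC_W$, $W \in \T'$, where everything is known. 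Assembling: $\imin\eta R\rho'_*F \simeq \imin\eta \rho'_*F \simeq \rho_*F \simeq R\rho_*F$, with the middle isomorphism from Lemma \ref{lem: structure of inverse image} and the outer two from the acyclicity statements, and the compatibility with the derived structure handled as above.

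A remark worth inserting: the hypothesis of Lemma \ref{lem: coh acyclic} is used twice and essentially — once to kill $R^{>0}\rho'_*$ on coherent sheaves and once (implicitly, via $R\rho'_* = R\eta_* \circ R\rho_*$) to identify the $\eta_*$-acyclic objects — so the corollary is genuinely a statement about $\T'$-spaces for which the representable sheaves behave well, which is exactly the situation realized in the next section for $\T' = $ finite unions of boxes $U \times V$.
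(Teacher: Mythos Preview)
Your proof is correct and follows essentially the same route as the paper: the chain $\imin\eta R\rho'_*F \simeq \imin\eta \rho'_*F \simeq \rho_*F \simeq R\rho_*F$, with the outer isomorphisms coming from $\rho'_*$- and $\rho_*$-acyclicity of $\T'$-coherent sheaves and the middle one from Lemma \ref{lem: structure of inverse image}. The only differences are cosmetic: the paper does not bother with d\'evissage (since every term of a complex in $D^b(\coh(\T'))$ is already acyclic for both $\rho'_*$ and $\rho_*$, the derived pushforwards are computed termwise directly), and your paragraph worrying about compatibility of $\imin\eta$ with the quasi-isomorphism is superfluous --- $\imin\eta$ is exact, which you yourself note, so no further bookkeeping with injective resolutions is needed.
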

\begin{proof}
We have the chain of isomorphisms
$$
\imin\eta R\rho'_* F  \simeq  \imin\eta \rho'_* F 
 \simeq  \rho_*F 
 \simeq  R\rho_*F,
$$
where the first and the last isomorphisms follow since $\rho_*'$ and $\rho_*$ are acyclic on $\coh(\T')$ and the second isomorphism is part of the proof of Lemma \ref{lem: structure of inverse image}.
\end{proof}

\section{The case of a product}
Hereafter we will consider the case where $Y$ is a product $X\times S$ of real analytic manifolds.
On $X\times S$ it is natural to consider the family $\T'$ consisting of finite unions of open relatively compact subsets of the form $U\times V$ which makes  $X\times S$  a $\T'$-space. The  associated  site $Y_{\T'}$ is nothing more than the product of sites $X_{sa}\times S_{sa}$.
 Let $p_1:X \times S \to X$ and $p_2: X \times S \to S$ be the projections.

Note that $W \in \op(X_{sa}\times S_{sa})$ is a locally finite union of relatively compact subanalytic open subsets of the form $U \times V$, $U \in \op(X_{sa})$, $V \in \op(S_{sa})$. Accordingly to Section 1, we denote by  $\eta:(X \times S)_{sa} \to X_{sa} \times S_{sa}$ the natural functor of sites associated to the inclusion $\op(X_{sa} \times S_{sa}) \hookrightarrow \op((X \times S)_{sa})$.


We shall need the following result:
\begin{lem}\label{L:26}
Let $F,G$ be objects of $D^b(\coh({\T'}))$. Then $\rh(F,G)$ is an object of $D^b(\coh(\T'))$.
\end{lem}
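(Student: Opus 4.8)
The plan is to reduce the statement to the elementary building blocks of $\coh(\T')$, namely the sheaves $\CC_W$ with $W = U \times V$ a relatively compact subanalytic open box, and to compute $\rh(\CC_{U\times V}, \CC_{U'\times V'})$ explicitly. First I would observe that, by Lemma \ref{lem: finite res}, every object of $\coh(\T')$ admits a finite resolution by objects of $\mathcal{I}$, i.e.\ by finite direct sums of sheaves $\CC_W$ with $W \in \T'$ connected; moreover a connected $W \in \T'$ is a finite union of relatively compact boxes, but since we only need a resolution in $D^b(\coh(\T'))$ it suffices to work with the $\CC_W$, $W \in \T'$. Hence, using that $D^b(\coh(\T'))$ is a triangulated subcategory (stability under kernels plus the finite resolutions give stability under the relevant cones), both $F$ and $G$ may be replaced by bounded complexes whose terms lie in $\mathcal{I}$, and by additivity and the long exact sequence in $\rh$ we reduce to the case $F = \CC_W$, $G = \CC_{W'}$ with $W, W' \in \T'$.

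The key computation is then $\rh(\CC_W, \CC_{W'})$. Writing $W = \bigcup_i U_i \times V_i$ and $W' = \bigcup_j U'_j \times V'_j$ as finite unions of boxes, one resolves $\CC_W$ and $\CC_{W'}$ by the \v{C}ech-type complexes built from the intersections of the boxes (finite intersections of boxes are again boxes), so one is reduced to $\rh(\CC_{U\times V}, \CC_{U'\times V'})$ for single boxes. Using the product structure, $\CC_{U\times V} \simeq \CC_U \boxtimes \CC_V$ and similarly for the target, and $\rh$ of external products over a product decomposes as an external product of the $\rh$'s computed on the factors $X$ and $S$ separately. On each factor one is left with $\rh(\CC_A, \CC_B)$ for $A,B$ relatively compact subanalytic open subsets of a single real analytic manifold; this is a classical $\R$-constructible computation — it is $R\Gamma_A(\CC_B) \simeq R\Gamma_A(\overline{A}; \CC_B)$ restricted appropriately, concretely $\CC_{\overline{B}\cap A}$-type terms — and in any case it is an $\R$-constructible sheaf on the factor with support contained in a subanalytic set, hence lies in $\coh(\T_{X})$ (resp.\ $\coh(\T_S)$) after the usual truncation. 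Taking external products back, $\rh(\CC_{U\times V},\CC_{U'\times V'})$ is a bounded complex of objects of $\coh(\T')$, since external products of coherent sheaves on the two factors are $\T'$-coherent by the very definition of $\T'$.

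Assembling the pieces: the \v{C}ech reductions and the finite resolutions from Lemma \ref{lem: finite res} are all finite, so at no stage do we leave boundedness, and $\coh(\T')$ being additive and stable under kernels (as recalled in Section \ref{S:1}) guarantees that the cohomology sheaves of the resulting complex are $\T'$-coherent; thus $\rh(F,G) \in D^b(\coh(\T'))$. The main obstacle I anticipate is the bookkeeping in the box-intersection step — making sure that the \v{C}ech resolutions are compatible with the tensor/external-product decomposition and that the single-factor computation $\rh(\CC_A,\CC_B)$ genuinely produces an $\R$-constructible, hence (after truncation, which is harmless in the bounded derived category) $\T$-coherent, sheaf rather than something merely with subanalytic cohomology. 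One must also be slightly careful that ``connected $W\in\T'$'' used in $\mathcal{I}$ need not be a single box, but since a connected relatively compact subanalytic open set is a finite union of boxes the \v{C}ech argument still applies verbatim. Everything else is a routine dévissage.
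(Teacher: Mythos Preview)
Your proposal is correct and follows essentially the same route as the paper: reduce by d\'evissage to $F=\CC_{U\times V}$, $G=\CC_{U'\times V'}$ for single boxes, use the external-product decomposition $\rh(\CC_U\boxtimes\CC_V,\CC_{U'}\boxtimes\CC_{V'})\simeq\rh(\CC_U,\CC_{U'})\boxtimes\rh(\CC_V,\CC_{V'})$ (the paper cites Proposition~3.4.4 of \cite{KS1}), and conclude via $\R$-constructibility on each factor (the paper phrases this last step as ``replace by almost free resolutions in the sense of \cite{KS1}'', which is exactly your passage from compactly supported $\R$-constructible to $\T$-coherent). Your write-up is more explicit about the \v{C}ech reductions, but the mathematical content is the same.
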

\begin{proof}
We may assume that $F\simeq \CC_U$ and $G\simeq \CC_V$ for some $U,V \in\T'$.
 Moreover, it is sufficient to consider $U$ and $V$ respectively of the form $U=U_1\times W_1$ and $V=U_2\times W_2$. Then, as a consequence of Proposition 3.4.4 of \cite{KS1} we have $$\rh(\CC_U, \CC_V)\simeq\rh(\CC_{U_1}, \CC_{U_2})\boxtimes\rh(\CC_{W_1},\CC_{W_2}).$$
 Since $\rh(\CC_{U_1}, \CC_{U_2})$ (resp. $\rh(\CC_{W_1},\CC_{W_2}))$ are $\R$-constructible complexes respectively on $X$ and $S$, replacing $\rh(\CC_{U_1}, \CC_{U_2})$

 \noindent (resp. $\rh(\CC_{W_1},\CC_{W_2}))$ by almost free resolutions in the sense of \cite{KS1} we conclude that  $\rh(\CC_U, \CC_V)$ belongs to $D^b(\coh(\T'))$
and the result follows.

\end{proof}

\begin{prop}\label{L:2}
For any $U\in\T'$, $\CC_{U}$ is  $\rho'_*$-acyclic.
\end{prop}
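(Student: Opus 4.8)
The plan is to reduce the statement to a concrete computation about $\R$-constructible sheaves on the product site, exploiting the product structure $\T' \ni U = U_1 \times W_1$ and the fact that $\R$-constructible sheaves are $\rho_*$-acyclic on each factor separately. Recall that $\rho'_* = \eta_* \circ \rho_*$ by the commutative diagram of sites \eqref{diagram of sites}, and that $\rho_*$ is already known to be acyclic on $\coh(\T)$ (in particular on $\CC_U$ for $U \in \T' \subset \op^c(Y_{sa})$). Hence it suffices to prove that $\rho_* \CC_U = \CC_{U_\T}$ is $\eta_*$-acyclic, i.e. that $R^k\eta_* \CC_{U_\T} = 0$ for $k > 0$.

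First I would set up the computation of $R\eta_*$. Since $\imin\eta$ is exact and left adjoint to $\eta_*$ (Lemma \ref{lem: structure of inverse image} and its corollary), $\eta_*$ sends injectives to injectives, and $R\eta_*$ can be computed by a $\T$-flabby resolution. Given $W \in \op(Y_{\T'}) = \op(X_{sa}\times S_{sa})$, I would compute $R\Gamma(W; R\eta_* \CC_{U_\T}) \simeq R\Gamma(W; \CC_{U_\T})$, the latter being sections over the subanalytic site $(X\times S)_{sa}$, and compare with $R\Gamma(W; \CC_{W_{\T'}}$-type sections$)$ on the product site. Concretely, for $W = U' \times V'$ a basic element of $\T'$, the sections $R\Gamma(U' \times V'; \CC_{U_\T}) = R\Gamma(U' \times V'; \CC_{(U_1\times W_1)_\T})$ should be computed by a Künneth-type argument: using that $\CC_{U_1 \times W_1} \simeq \CC_{U_1} \boxtimes \CC_{W_1}$ and Proposition 3.4.4 of \cite{KS1} together with the $\rho_*$-acyclicity of $\R$-constructible sheaves on $X$ and $S$, one gets $R\Gamma(U'\times V'; \CC_{(U_1\times W_1)_\T}) \simeq R\Gamma(U'; \CC_{U_1}) \otimes R\Gamma(V'; \CC_{W_1})$, concentrated in degrees where the individual factors are, with no new higher cohomology introduced by the product site versus the product of sites. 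The key point is that this matches $R\Gamma(U' \times V'; \CC_{(U_1\times W_1)_{\T'}})$ computed on $X_{sa}\times S_{sa}$, which by definition of the product site is the derived tensor product of the factor-wise sections.

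The main obstacle I anticipate is handling the passage from basic rectangles $U' \times V'$ to arbitrary $W \in \op(X_{sa}\times S_{sa})$, which are only \emph{locally finite} unions of such rectangles, and controlling the Mayer–Vietoris / Čech spectral sequence for such covers so that it degenerates consistently on both sites. Here I would use the standard exhaustion $W = \bigcup_n W_n$ with $W_n \subset\subset W_{n+1}$ from the proof of Lemma \ref{lem: section on product0}, reduce to relatively compact $W$ where finite covers by rectangles suffice, and invoke that finite intersections of rectangles are again rectangles (indeed in $\T'$), so the Čech complex for the cover lives entirely in $\coh(\T')$ and its cohomology can be computed compatibly. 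The $\varprojlim$ over $n$ is then controlled because the transition maps are surjective on sections (flabbiness), so $\varprojlim^{(1)}$ vanishes. Combining, $R\eta_*\CC_{U_\T}$ has sections over every $W \in \op(Y_{\T'})$ agreeing with those of $\CC_{U_{\T'}}$, forcing $R^k\eta_*\CC_{U_\T} = 0$ for $k>0$; equivalently $\rho_*\CC_U$ is $\eta_*$-acyclic, and therefore $\CC_U$ is $\rho'_*$-acyclic.

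Alternatively, and perhaps more cleanly, one can invoke Lemma \ref{L:26}: since $\rh(F,G) \in D^b(\coh(\T'))$ for $F, G \in D^b(\coh(\T'))$, one reduces directly to showing the analogue of \eqref{diagram of sites}-compatibility for $\rh(\CC_U, \CC_V)$-type objects, for which Corollary \ref{C:25} ($\imin\eta R\rho'_* F \iso R\rho_* F$ for $F \in D^b(\coh(\T'))$) — together with full faithfulness of $\imin\eta$ — gives $R\rho'_* F \simeq \rho'_* F$ once we know the corresponding statement on each analytic factor, which is precisely the $\rho_*$-acyclicity of $\R$-constructible sheaves cited in the excerpt. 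I would present the first (direct Künneth) approach as the proof, as it does not risk circularity with the section-1 corollaries.
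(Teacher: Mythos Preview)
Your reduction to $\eta_*$-acyclicity of $\rho_*\CC_U$ via $R\rho'_*=R\eta_*\circ R\rho_*$ is fine, and the K\"unneth computation $R\Gamma(U'\times V';\CC_{U_1\times W_1})\simeq R\Gamma(U';\CC_{U_1})\otimes R\Gamma(V';\CC_{W_1})$ over a field is correct. But from that point on the argument is confused. The assertion that $R\Gamma(U'\times V';\CC_{(U_1\times W_1)_{\T'}})$ is ``by definition of the product site'' the derived tensor product of factor-wise sections is not a definition and would itself require proof. More importantly, even granting that the derived sections on $Y_{sa}$ and on $Y_{\T'}$ agree over every rectangle, this does \emph{not} force $R^k\eta_*\CC_{U_\T}=0$: the K\"unneth tensor product is not concentrated in degree zero for general $U',V'$, so your sentence ``$R\eta_*\CC_{U_\T}$ has sections over every $W$ agreeing with those of $\CC_{U_{\T'}}$, forcing $R^k\eta_*\CC_{U_\T}=0$'' is a non sequitur. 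What you must show is that the presheaf $U'\times V'\mapsto H^k(U'\times V';\CC_U)$ has zero $\T'$-sheafification for $k>0$. The K\"unneth decomposition can indeed do this---write a class as a finite sum $\sum_m\alpha_m\otimes\beta_m$ with $\deg\alpha_m+\deg\beta_m=k>0$; for each term one factor has positive degree and, by $\rho_*$-acyclicity of $\R$-constructible sheaves on $X_{sa}$ (resp.\ $S_{sa}$), dies on a finite subanalytic cover of $U'$ (resp.\ $V'$); the product refinement of these covers, which consists of rectangles in $\T'$, kills the class---but this step is absent from your proposal. The discussion of general $W$, exhaustions and $\varprojlim^1$ is then unnecessary, since vanishing of an associated sheaf is local.

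For comparison, the paper takes a direct topological route with no K\"unneth: it produces, via the triangulation theorem applied separately to $X$ and $S$, a basis of $\T'$ by products $U'\times V'$ of open stars compatible with $U_1$ and $V_1$; for such $W$ one checks by a contractibility argument (using the distinguished triangle $R\Gamma(W;\CC_{U_1\times V_1})\to R\Gamma(W;\CC_Y)\to R\Gamma(W;\CC_{Y\setminus U_1\times V_1})\stackrel{+}{\to}$ and the fact that $W$ and $W\setminus(U_1\times V_1)$ are contractible) that $R\Gamma(W;\CC_U)$ is concentrated in degree zero. This exhibits a basis on which ordinary cohomology already vanishes, which is stronger than what the sheafification argument needs. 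Your alternative via Corollary~\ref{C:25} is, as you suspect, genuinely circular: that corollary is stated under the hypothesis of Lemma~\ref{lem: coh acyclic}, which is exactly the present proposition.
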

\begin{proof}
It is sufficient to consider $U=U_1\times V_1$, with $U_1\in\op(X_{sa})$ and $V_1\in\op(S_{sa})$. The sheaf $R^j\rho'_*\CC_{U}$ is the sheaf associated to the
presheaf $W \mapsto R^j\Gamma(W;\CC_{U})$ so it is sufficient  to show that
$R^j\Gamma(W;\CC_{U})=0$ for $j \neq 0$ on a family of generators $W$ of the topology of $Y_{\T'}$. In particular, we may assume that $W\in\T'$, so $W=U'\times V'$.

We use the notations of \cite{KS1}.
By the triangulation
theorem there exist a simplicial complex $(K_X,\Delta_X)$, a simplicial complex $(K_S,\Delta_S)$, a
subanalytic homeomorphism $\psi_S:|K_S| \iso S$  compatible with $U_1$  and a subanalytic homeomorphism $\psi_S:|K_X| \iso X$  compatible with $V_1$ such that $U'$  is a finite union of the images by $\psi_X$  of open stars of $|K_X|$ and $V'$ is a a finite union of the images by $\psi_S$  of open stars of $|K_S|$.
So we may assume that $U'$  is the image of an open star compatible with $U_1$ and similarly that $V'$ is the image of an open star compatible with $V_1$.  On the other hand, it is clear by the assumption on $U_1$ (resp. on $V_1$)  and by the construction of an open star with a given center, that $U'\setminus U_1$ always contracts in the center of $U'$ (resp. $V'\setminus V_1$ contracts in the center of $V'$). Indeed, if the center of $U'$ belongs to $U_1$, then $U'\subset U_1$. Otherwise, the contraction of $U'$ in its center restricts to a contraction of $U'\setminus U_1$.
Consider the distinguished triangle
$$
\dt{R\Gamma(W;\CC_{U_1\times V_1})}{R\Gamma(W;\CC_{Y})}{R\Gamma(W;\CC_{Y \setminus U_1\times V_1})}.
$$
It is clear that $U' \times V'$ contracts to the product of the centers respectively of $U'$ and $V'$. On the other hand  the space $(U' \times V') \setminus (U_1 \times V_1)=(U'\setminus U_1)\times V'\cup U'\times (V'\setminus V_1)$
 is a union of closed contractible subspaces such that  the contraction coincide on their intersection, hence it is contractible.
  It follows that  $R\Gamma(W;\CC_{Y}) \simeq R\Gamma(W;\CC_{W})$ and that $R\Gamma(W;\CC_{Y \setminus U_1\times V_1})\simeq R\Gamma(W \setminus U_1\times V_1;\CC_{W \setminus U_1\times V_1})$ are concentrated in degree zero.
This implies that $R\Gamma(W;\CC_{U_1\times V_1})$ is concentrated in degree zero as well.
\end{proof}

In view of Lemma \ref{lem: coh acyclic} we have
\begin{cor}\label{C:T} For any $F \in \coh(\T')$, $F$ is $\rho'_*$-acyclic.
\end{cor}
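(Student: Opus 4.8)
The plan is to deduce this as the formal conjunction of the two preceding results. First I would note that Proposition \ref{L:2} establishes exactly the standing hypothesis of Lemma \ref{lem: coh acyclic}: for every $U \in \T'$ the sheaf $\CC_U$ is $\rho'_*$-acyclic. With that input in place, part $(1)$ of Lemma \ref{lem: coh acyclic} applies verbatim and gives that every $F \in \coh(\T')$ is $\rho'_*$-acyclic, which is the assertion.

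For completeness I would recall (rather than reprove) the mechanism inside Lemma \ref{lem: coh acyclic}$(1)$, so that the reader sees why Proposition \ref{L:2} is the only thing needed. By Lemma \ref{lem: finite res} and the definition of $\coh(\T')$, any $F \in \coh(\T')$ admits a finite resolution
$$
0 \to I_1 \to \cdots \to I_n \to F \to 0
$$
with $I_k \in \mathcal{I}$; since $\CC_U$ is $\rho'_*$-acyclic for each $U \in \T'$, every quotient $I_1/I_2$ of objects of $\mathcal{I}$ is $\rho'_*$-acyclic, and a dévissage along this resolution then propagates $\rho'_*$-acyclicity to $F$.

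I do not expect any real obstacle: all the work is already done, in the triangulation-and-contraction argument of Proposition \ref{L:2} and in the dévissage of Lemma \ref{lem: coh acyclic}. The only point to be careful about is purely bookkeeping, namely to make explicit that the phrase ``for any $U \in \T'$, $\CC_U$ is $\rho'_*$-acyclic'' in the hypothesis of Lemma \ref{lem: coh acyclic} is literally the statement of Proposition \ref{L:2}, so that the invocation of the lemma is legitimate without any further verification.
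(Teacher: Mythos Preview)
Your proposal is correct and follows exactly the paper's approach: the corollary is stated immediately after Proposition~\ref{L:2} with the preface ``In view of Lemma~\ref{lem: coh acyclic} we have,'' i.e., Proposition~\ref{L:2} supplies the hypothesis of Lemma~\ref{lem: coh acyclic} and part~(1) of that lemma yields the claim. Your added recollection of the d\'evissage mechanism is faithful to the proof of Lemma~\ref{lem: coh acyclic}(1) and is merely expository padding beyond what the paper writes.
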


\begin{nt} Since every $F \in \coh(\T')$ is $\rho'_*$-acyclic and $\rho'_*$ is fully faithful, we can identify
$D^b(\coh(\T'))$ with its image in $D^b(\CC_{Y_{\T'}})$. When there
is no risk of confusion we will write $F$ instead of $\rho'_*F$,
for $F \in D^b(\coh(\T'))$.
\end{nt}

After Corollary \ref{C:T} we have:
\begin{cor} \label{cor:2} Let $F \in D^b(\CC_{Y_{\T'}})$. Then $R\Gamma(W;\imin\eta F) \simeq R\Gamma(W;F)$ for each $ W\in \op(Y_{\T'})$. In particular $\id \iso R\eta_*\imin\eta$.
\end{cor}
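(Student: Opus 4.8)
The statement to prove is Corollary \ref{cor:2}: for $F \in D^b(\CC_{Y_{\T'}})$ one has $R\Gamma(W;\imin\eta F) \simeq R\Gamma(W;F)$ for every $W \in \op(Y_{\T'})$, and consequently $\id \iso R\eta_*\imin\eta$. The plan is simply to invoke Lemma \ref{lem: coh acyclic}(2), whose hypothesis is exactly that $\CC_U$ is $\rho'_*$-acyclic for all $U \in \T'$; this hypothesis has just been verified in Proposition \ref{L:2} (and restated via Lemma \ref{lem: coh acyclic} as Corollary \ref{C:T}). So the first sentence of Corollary \ref{cor:2} is immediate.

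For the ``in particular'' clause, I would note that $R\eta_*\imin\eta F$ is the object of $D^b(\CC_{Y_{\T'}})$ whose sections over $W \in \op(Y_{\T'})$ are $R\Gamma(W;\imin\eta F)$, since $W$ ranges over a generating family of the site $Y_{\T'}$ and $R\eta_*$ is computed on such $W$ by $R\Gamma(W;-)$ applied after $\imin\eta$ (equivalently, $R\Gamma(W; R\eta_* G) \simeq R\Gamma(\eta^{-1}(W)\,; G)$, and $\eta^{-1}$ takes the object $W$ of $Y_{\T'}$ to the corresponding object of $Y_{sa}$). Combining this with the isomorphism $R\Gamma(W;\imin\eta F)\simeq R\Gamma(W;F)$, functorial in $W$, gives that the adjunction morphism $F \to R\eta_*\imin\eta F$ induces an isomorphism on $R\Gamma(W;-)$ for every $W$ in a generating family, hence is an isomorphism in $D^b(\CC_{Y_{\T'}})$. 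This is essentially a restatement of the first Corollary following Lemma \ref{lem: coh acyclic}, so alternatively one can just cite that Corollary together with Corollary \ref{C:T}.

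There is no real obstacle here: the content has been front-loaded into Proposition \ref{L:2}, whose proof (the triangulation/open-star contractibility argument) is the substantive step. Corollary \ref{cor:2} is purely a matter of recording that the hypothesis of Lemma \ref{lem: coh acyclic} now holds in the product setting, so the conclusions of that lemma and of the Corollary following it apply verbatim. The only point requiring a moment's care is the passage from the pointwise-in-$W$ isomorphism to the isomorphism of objects in the derived category, which is standard since the $W \in \op(Y_{\T'})$ generate the topology and one is comparing via the canonical adjunction morphism.

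\begin{proof}
By Proposition \ref{L:2}, $\CC_U$ is $\rho'_*$-acyclic for every $U \in \T'$, so the hypothesis of Lemma \ref{lem: coh acyclic} is satisfied. Assertion (2) of that lemma gives $R\Gamma(W;\imin\eta F) \simeq R\Gamma(W;F)$ for each $W \in \op(Y_{\T'})$ and $F \in D^b(\CC_{Y_{\T'}})$. Since the $W \in \op(Y_{\T'})$ form a generating family of the site $Y_{\T'}$ and $R\Gamma(W; R\eta_* \imin\eta F) \simeq R\Gamma(W; \imin\eta F)$, the adjunction morphism $F \to R\eta_*\imin\eta F$ induces an isomorphism on $R\Gamma(W;\cdot)$ for all such $W$, hence is an isomorphism in $D^b(\CC_{Y_{\T'}})$. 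Thus $\id \iso R\eta_*\imin\eta$.
\end{proof}
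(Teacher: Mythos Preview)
Your proof is correct and follows the same route as the paper: the statement is recorded there without a separate proof, simply as an immediate consequence of Corollary \ref{C:T} (equivalently Proposition \ref{L:2}) together with Lemma \ref{lem: coh acyclic}(2) and the Corollary following it. You have merely spelled out the passage from the $R\Gamma(W;\cdot)$-isomorphism to $\id\iso R\eta_*\imin\eta$, which the paper leaves implicit.
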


\begin{oss}\label{oss:T} Remark that while $\id \iso R\eta_*\imin\eta$, $\imin\eta R\eta_*\iso\id$ does not hold in general. This can be illustrated with the following example: let $X=S=\R$ and let $\overline{B}_1$ be the closed unit ball centered at the origin. It is easy to check that
$$
\eta_*\CC_{\overline{B}_1} \simeq \lind {W \supset \overline{B}_1}\rho'_*\CC_{\overline{W}}
$$
with $W \in \T'$. Then
$$
\imin\eta\eta_*\CC_{\overline{B}_1} \simeq \imin\eta \lind {W \supset \overline{B}_1}\rho'_*\CC_{\overline{W}} \simeq \lind {W \supset \overline{B}_1}\rho_*\CC_{\overline{W}} \not\simeq \CC_{\overline{B}_1},
$$
where the second isomorphism follows from Lemma \ref{lem: structure of inverse image}.
\end{oss}

\begin{lem}\label{L:28}
Let $F \in D^b(\CC_{Y_{\T'}})$ and let $G\in D^b(\coh(\T'))$. Then $$\eta^{-1}\rh(\rho_*'G, F)\simeq \rh(\rho_*G,\eta^{-1}F).$$
\end{lem}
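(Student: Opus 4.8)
The plan is to reduce to the "coherent" case and then use the explicit descriptions of inverse image and internal hom available from Section \ref{S:1}. First I would reduce the complex $F$ to a sheaf: both sides are cohomological functors in $F$, so by dévissage it suffices to treat $F \in \mod(\CC_{Y_{\T'}})$, and then write $F \simeq \lind i \rho'_*F_i$ with $F_i \in \coh(\T')$ (using the description of sheaves on $Y_{\T'}$ recalled in Section \ref{S:1}). Since $G \in D^b(\coh(\T'))$, by Lemma \ref{lem: finite res} and the remarks following it we may further replace $G$ by a bounded complex of objects of $\mathcal{I}$, and then — since $\rh(-,F)$ sends the complex to a complex in the other variable — reduce to $G = \CC_W$ with $W \in \T'$ connected, in fact (as in the proof of Lemma \ref{L:26}) to $W = U_1 \times V_1$ a product.

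Next I would compute both sides. On the left, $\rh(\rho'_*\CC_W, F)$: by Lemma \ref{L:26} (applied after writing $F$ as a filtered limit of coherent objects and using that $\rh(G,-)$ with $G$ coherent commutes with filtered $\Lind$), $\rh(\rho'_*\CC_W, F)$ is again (a filtered limit of) objects in $D^b(\coh(\T'))$, so by Corollary \ref{C:25} / Lemma \ref{lem: structure of inverse image} its $\eta^{-1}$ is computed termwise by replacing $\rho'_*$ with $\rho_*$. On the right, $\rh(\rho_*\CC_W, \eta^{-1}F)$: using $\eta^{-1}F \simeq \lind i \rho_*F_i$ (Lemma \ref{lem: structure of inverse image}) and $\rho_*\CC_W \simeq \CC_{W_\T}$, together with the fact that $\Ho(G,-)$ and $\ho(G,-)$ commute with filtered $\Lind$ for $G$ coherent, one reduces the right-hand side to $\lind i \rh(\CC_{W_\T}, \rho_*F_i)$. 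So the lemma comes down to the identity $\eta^{-1}\rh(\rho'_*\CC_W, \rho'_*F_i) \simeq \rh(\rho_*\CC_W, \rho_*F_i)$ for $W \in \T'$ and $F_i \in \coh(\T')$, and by the same coherence/exactness bookkeeping to the case $F_i = \CC_{W'}$, $W' \in \T'$ — i.e. to $\eta^{-1}\rh(\CC_{W_{\T'}}, \CC_{W'_{\T'}}) \simeq \rh(\CC_{W_\T}, \CC_{W'_\T})$.

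This last statement I would prove by reducing $W, W'$ to products (as above) and invoking the Künneth-type splitting of Proposition 3.4.4 of \cite{KS1} exactly as in the proof of Lemma \ref{L:26}: $\rh(\CC_W, \CC_{W'})$ decomposes as an external product of $\R$-constructible complexes on $X$ and on $S$, each representable by almost free (hence $\coh(\T')$-flavored) resolutions, so $\rh(\rho'_*\CC_W, \rho'_*\CC_{W'})$ lies in $D^b(\coh(\T'))$, and Corollary \ref{C:25} identifies its $\eta^{-1}$ with the corresponding object built from $\rho_*$, which is precisely $\rh(\rho_*\CC_W, \rho_*\CC_{W'})$ since $\rho_*$ is fully faithful and exact on $\coh(\T')$ and commutes with the relevant external products.

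The main obstacle I expect is bookkeeping rather than a conceptual difficulty: one must be careful that $\rh$ in the first variable is only well-behaved (commutes with the filtered limits coming from $\eta^{-1}F$, and lands in $\coh(\T')$) because the first argument is coherent, and that all the reductions — $F$ to a sheaf, the sheaf to $\lind i \rho'_*F_i$, $G$ to $\CC_W$, $W$ to a product — are compatible with applying $\eta^{-1}$ and taking $\rh$. Concretely, the delicate point is justifying that $\eta^{-1}$ commutes with $\rh(\rho'_*G, -)$ on the nose, which is what Corollary \ref{C:25} (together with Lemma \ref{L:26}) is designed to give once everything has been pushed down to $D^b(\coh(\T'))$.
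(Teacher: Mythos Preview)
Your proposal is correct and follows essentially the same approach as the paper: d\'evissage on $F$, writing $F$ as a filtered colimit $\lind i \rho'_*F_i$ with $F_i \in \coh(\T')$, and then invoking Lemma \ref{L:26} together with Corollary \ref{C:25} to identify $\eta^{-1}$ of the coherent pieces. The paper's argument is slightly more streamlined in that it never reduces $G$ to $\CC_W$ or $F_i$ to $\CC_{W'}$: since Lemma \ref{L:26} already gives $\rh(G,F_i) \in D^b(\coh(\T'))$ for arbitrary $G \in D^b(\coh(\T'))$, Corollary \ref{C:25} yields $\eta^{-1}\rho'_*\rh(G,F_i) \simeq \rho_*\rh(G,F_i)$ directly, so your final K\"unneth reduction is unnecessary (though not wrong).
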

\begin{proof}



 Let $F \in D^b(\CC_{Y_{\T'}})$. By d\'evissage, we may reduce to $F \in \mod(\CC_{Y_{\T'}})$. So $F$ satisfies $F \simeq \lind i\rho'_*F_i$  with $F_i\in\coh({\T'})$ and we can write $\imin\eta F \simeq \lind i\rho_*F_i$.

 We have
\begin{eqnarray*}
H^j\eta^{-1}\rh(\rho_*'G, F) & \simeq & H^j\eta^{-1}\rh(\rho_*'G, \lind i\rho_*'F_i) \\
& \simeq & \lind i\,H^j\eta^{-1}\rho_*'\rh(G, F_i) \\
& \simeq & H^j\lind i\rho_*\rh(G, F_i) \\
& \simeq & H^j\rh(\rho_*G, \lind i\rho_*F_i) \\
& \simeq & H^j\rh(\rho_*G, \eta^{-1}F),
\end{eqnarray*}
where the third isomorphism follows by Lemma \ref{L:26} and Corollary \ref{C:25}.
\end{proof}

We end this section with a result detailing  the behaviour of $\rho_*$ and $\rho'_*$ under tensor product:

\begin{lem}  Let $F \in D^b_{\rc}(\CC_X)$ and $G \in D^b(\CC_S)$. Then
\begin{enumerate}
\item{$\imin {p_1}\rho_*F \otimes \imin {p_2} R\rho_*G \simeq R\rho_*(\imin {p_1}F \otimes \imin {p_2}G)$,}
\item{$\rho'_*\imin {p_1}F \otimes R\rho'_* \imin {p_2} G \simeq R\rho'_*(\imin {p_1}F \otimes \imin {p_2}G)$.}
\end{enumerate}
\end{lem}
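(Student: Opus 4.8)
The plan is to reduce both isomorphisms to the $\R$-constructible case on $X$ (resp.\ on $S$) and then compare on generators, exactly in the spirit of Lemmas \ref{L:26} and \ref{L:2}. First I would recall that $R\rho_*$ computes hypercohomology of subanalytic open sets and commutes with filtrant $\Lind$ on those sheaves for which it applies nicely; in particular, since $F \in D^b_{\rc}(\CC_X)$ is $\rho_*$-acyclic (as recalled in Section \ref{S:1}), $R\rho_* F \simeq \rho_* F$, and likewise $\rho'_*\imin{p_1}F \simeq R\rho'_*\imin{p_1}F$ because $\imin{p_1}F$ is $\R$-constructible on $X \times S$ hence also $\rho_*$-acyclic, and its $\T'$-pushforward is controlled by Corollary \ref{C:T}. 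This removes the derived functor on the first factor in both statements and leaves one to check the tensor-product formula with $R\rho_*G$ (resp.\ $R\rho'_*\imin{p_2}G$), which is the genuinely derived object since $G$ is only bounded, not $\R$-constructible.

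For part (1), I would proceed by dévissage to reduce $F$ to a single sheaf $\CC_U$ with $U \in \op(X_{sa})$, indeed (using triangulation as in Proposition \ref{L:2}) to $U$ an open star, and $G$ to a single injective or flabby object, so that both sides are computed by honest sections over generators $U' \times V'$ of the $\T$-topology. The key computation is the projection-type formula $R\Gamma(U' \times V'; \imin{p_1}\CC_U \otimes \imin{p_2}G) \simeq R\Gamma(U'; \CC_U|_{U'}) \otimes R\Gamma(V'; G|_{V'})$, which follows from the Künneth formula for sheaf cohomology on a product together with the fact that $R\Gamma(U'; \CC_{U \cap U'})$ is concentrated in degree zero and is a free $\CC$-module of finite rank (contractibility of open stars, as established inside the proof of Proposition \ref{L:2}); the left-hand side of (1), sectioned over $U' \times V'$, unwinds via $\rho_*$ and $R\rho_*$ to the same tensor product. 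Matching the restriction maps gives the isomorphism of presheaves, hence of the associated sheaves on $(X\times S)_{sa}$. Part (2) is then formally identical, working on the site $Y_{\T'} = X_{sa}\times S_{sa}$ instead: one replaces $R\Gamma(W;-)$ on $(X\times S)_{sa}$ by $R\Gamma(W;-)$ on $Y_{\T'}$ for $W \in \T'$, uses that $\CC_U$ ($U \in \T'$) is $\rho'_*$-acyclic by Proposition \ref{L:2}, and invokes Corollary \ref{cor:2} (equivalently Lemma \ref{L:28} and Corollary \ref{C:25}) to pass between the two sites; alternatively, one can deduce (2) directly from (1) by applying $R\eta_*$ to both sides of (1) and using $\imin\eta R\eta_* \not\simeq \id$ — so this shortcut must be handled with care — or more safely by re-running the generator computation on $Y_{\T'}$ from scratch.

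The main obstacle I expect is the bookkeeping of the Künneth isomorphism with the correct naturality: one needs $R\Gamma$ of the external tensor product to split as a tensor product of the factor-wise $R\Gamma$'s \emph{compatibly with all restriction maps among the generators $U'\times V'$}, and since $G$ is an arbitrary bounded complex (not $\R$-constructible) one cannot reduce $G$ to a finite free object and must argue with a flabby or injective resolution of $\imin{p_2}G$, checking that $\imin{p_1}\CC_U \otimes (\text{flabby on }S)$ stays acyclic for $R\Gamma$ over the relevant opens. This is where the finite-rank freeness of $R\Gamma(U';\CC_{U\cap U'})$ in a single degree — the content borrowed from Proposition \ref{L:2} — is essential, since tensoring a flabby resolution with a finite free module in degree zero preserves the property of computing derived sections. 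Once that compatibility is pinned down, both formulas fall out by comparing presheaves on a generating family and sheafifying.
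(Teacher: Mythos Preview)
Your approach for (1) has a structural gap. You propose to verify both sides on derived sections over products $U' \times V'$, but these are generators for the site $Y_{\T'}=X_{sa}\times S_{sa}$, \emph{not} for $(X\times S)_{sa}$; arbitrary relatively compact subanalytic opens of $X\times S$ are not finite unions of products. Even if your K\"unneth computation goes through (and the reduction to open stars does give $R\Gamma(U';\CC_{U\cap U'})$ finite free in degree zero, as you need), what you obtain is only an isomorphism after applying $R\eta_*$ to both sides of (1). Since $\imin\eta R\eta_*\to\id$ is not an isomorphism in general (Remark~\ref{oss:T}), this does not recover (1) unless you already know that both sides lie in the essential image of $\imin\eta$---and for the right-hand side $R\rho_*(\imin{p_1}F\otimes\imin{p_2}G)$ that is essentially the content of the lemma. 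The same obstruction blocks deducing (1) from (2) via $\imin\eta$, a shortcut you correctly flag as dangerous but do not resolve.

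The paper's argument for (1) is entirely different and avoids any section computation: it exploits the $\R$-constructibility of $F$ to rewrite
\[
\imin{p_1}F\otimes\imin{p_2}G \;\simeq\; \rh(\imin{p_1}D'F,\imin{p_2}G)
\]
(Proposition~3.4.4 of \cite{KS1}), then invokes the commutation of $R\rho_*$ with $\rh(\rho_*(\text{$\R$-const.}),-)$ from \cite{P}, together with the compatibility of $p_2^!$ with $R\rho_*$, and finally converts $\rh$ back to $\otimes$ on the subanalytic side. This is a purely categorical four-line chain. For (2) the paper proceeds in the direction opposite to your sketch: it applies $R\eta_*$ to the already-established isomorphism (1), after first verifying (steps (2a)--(2b)) that $R\eta_*(\rho_*F\otimes\imin{p_2}G)$ is concentrated in degree zero and that $\eta_*$ distributes over this particular tensor product---the key point being that $\imin{p_2}R\rho_*G$ is of the form $\imin\eta G'$ for some $G'\in\mod(\CC_{Y_{\T'}})$, so Corollary~\ref{cor:2} applies. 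Your generator-based computation would be adequate for (2) in isolation, since products do form a basis for $Y_{\T'}$, but as it stands the plan does not deliver (1).
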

\begin{proof} Let us recall that the restriction of $\rho_*$ (resp. $\rho'_*$) to $\R$-constructible sheaves (resp. $\T'$-coherent sheaves) is fully faithful, exact and commutes with $\rh$, $\otimes$ and inverse image. We will often use these facts during the proof of (1) and (2).

(1) We have the chain of isomorphisms
\begin{eqnarray*}
R\rho_*(\imin {p_1}F \otimes \imin {p_2}G) & \simeq & R\rho_*\rh(\imin {p_1}D'F,\imin {p_2}G) \\
& \simeq & \rh(\rho_*\imin {p_1}D'F,R\rho_*\imin {p_2}G) \\
& \simeq & \rh(\imin {p_1}D'\rho_*F,\imin {p_2}R\rho_*G) \\
& \simeq & \imin {p_1}\rho_*F \otimes \imin {p_2} R\rho_*G.
\end{eqnarray*}
The first isomorphism follows from Proposition 3.4.4 of \cite{KS1}, the second one from Proposition 2.2.1 of \cite{P}, the third isomorphism follows from the fact that $\imin {p_2}(\cdot) \otimes p_2^!\CC_{X \times S} \simeq p_2^!(\cdot)$ (Proposition 2.4.9 of \cite{P}) and that $p_2^!$ commutes with $R\rho_*$ (Proposition 2.4.5 of \cite{P}) and the fourth one follows from Lemma 5.3.9 of \cite{PM}.

(2) We prove the assertion in several steps. Recall that $\rho'=\eta \circ \rho$, where $\eta:X_{sa} \to X_{\T'}$ is the natural functor of sites.
\begin{itemize}
\item[(2a)] Let $F \in \coh(\T')$ and $G \in \mod(\CC_{S_{sa}})$.  Then $G=\lind i \rho_*G_i$ with $G_i \in \mod_{\rc}(\CC_S)$. We have
$$
\lind i \imin {p_2}\rho_*G \simeq \lind i \rho_*\imin {p_2}G_i \simeq \lind i \imin\eta\rho'_*\imin {p_2}G_i \simeq \imin\eta \lind i \rho'_*\imin {p_2}G_i.
$$
The first isomorphism follows from Proposition 1.3.3 of \cite{P}, the second one from Corollary \ref{C:25}, the third one since inverse images commute with $\Lind$. Therefore $\imin {p_2}G \simeq \imin\eta G'$ with $G' \simeq \lind i \rho'_*\imin {p_2}G_i \in \mod(\CC_{Y_{\T'}})$. We have
$$
R\eta_*(\rho_*F \otimes \imin\eta G') \simeq R\eta_*\imin\eta(\rho'_*F \otimes G') \simeq \rho'_*F \otimes G'
$$
The first isomorphism follows since inverse images commute with $\otimes$ and $\rho_* \simeq \imin\eta \circ \rho'_*$ on $\coh(\T')$. The second isomorphism follows from Proposition \ref{L:2}. Hence $R\eta_*(\rho_*F \otimes \imin {p_2}G)$ is concentrated in degree 0.

\item[(2b)] Let $F \in D^b(\coh(\T'))$ and $G \in D^b(\CC_{S_{sa}})$. We shall prove that $$\eta_*(\rho_*F \otimes \imin {p_2}G) \simeq \eta_*\rho_*F \otimes \eta_*\imin {p_2}G$$ (here we use the last asssertion in (2a) to replaced $R\eta_*$ by $\eta_*$). By d\'evissage we may reduce to $F,G$ concentrated in degree zero. By (2a), we have $\imin {p_2}G\simeq\imin\eta G'\simeq\imin\eta\eta_*\imin\eta G'\simeq\imin\eta\eta_*\imin {p_2}G$ with $G' \in \mod(\CC_{Y_{\T'}})$ (the second isomorphism follows from Corollary \ref{cor:2}).
   In view of the preceding arguments, we have the chain of isomorphisms
    \begin{eqnarray*}
    \eta_*(\rho_*F \otimes \imin {p_2}G) & \simeq & \eta_*(\imin\eta\rho'_*F \otimes \imin\eta\eta_*\imin {p_2} G) \\
    & \simeq & \eta_*\imin\eta(\rho'_*F \otimes \eta_*\imin {p_2} G) \\
    & \simeq & \rho'_*F \otimes \eta_*\imin {p_2} G \\
    & \simeq & \eta_*\rho_*F \otimes \eta_*\imin{p_2}G.
    \end{eqnarray*}
\item[(2c)] Let $F \in D^b_{\rc}(\CC_X)$ and $G \in D^b(\CC_S)$. Then
    \begin{eqnarray*}
    R\eta_*R\rho_*(\imin {p_1}F \otimes \imin {p_2}G) & \simeq & R\eta_*(\imin {p_1}\rho_*F \otimes \imin {p_2}R\rho_*G) \\
    & \simeq & \eta_*\rho_*\imin {p_1}F \otimes R\eta_*\imin {p_2}R\rho_*G \\
    & \simeq & \eta_*\rho_*\imin {p_1}F \otimes R\eta_*R\rho_*\imin {p_2}G.
    \end{eqnarray*}
    The first isomorphism follows from (1), the second one from (2b) and the third one from the fact that $\imin {p_2}$ commutes with $R\rho_*$ (see the proof of (1)).
\end{itemize}
\end{proof}

\section{Construction of relative subanalytic sheaves}\label{S:2}


Let $X$ and $S$ be two real analytic manifolds. Let  be given a subanalytic sheaf $F$ on $(X\times S)_{sa}$. 
We shall denote by  $F^{S,\sharp}$  the sheaf on $X_{sa} \times S_{sa}$ associated to the presheaf
\begin{eqnarray*}
\op(X_{sa} \times S_{sa}) & \to & \mod(\CC) \\
U \times V & \mapsto & \Gamma(X \times V;\imin\rho\Gamma_{U \times S}F) \\
& \simeq & \Ho(\CC_U \boxtimes \rho_!\CC_V,F) \\
& \simeq & \lpro {W \subset\subset V, W\in\op^c(S_{sa})}\Gamma(U \times W;F).
\end{eqnarray*}

We set \begin{equation}\label{E:10}
F^{S}:=\imin\eta F^{S,\sharp}
\end{equation}
 and call it the relative sheaf associated to $F$. It is a sheaf on $(X \times S)_{sa}$.
 It is easy to check that $(\cdot)^S$ defines a left exact functor on $\mod(\CC_{(X \times S)_{sa}})$.

 We will denote by $(\cdot)^{RS,\sharp}$ and $(\cdot)^{RS} \simeq \imin\eta \circ (\cdot)^{RS,\sharp}$ the associated right derived functors.

According to Lemma \ref{L:28} we get:
\begin{prop}\label{P:26}
  For each $G\in D^b_{\rc}(\CC_X)$, $H\in D^b_{\rc}(\CC_S)$ and $F\in D^b(\CC_{(X \times S)_{sa}})$, we have $$\eta^{-1}\rh(\rho_*'(G\boxtimes H), F^{RS,\sharp})\simeq \rh(\rho_*(G\boxtimes H), F^{RS}).$$
\end{prop}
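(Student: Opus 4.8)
The statement to prove is Proposition \ref{P:26}:
$$
\eta^{-1}\rh(\rho_*'(G\boxtimes H), F^{RS,\sharp})\simeq \rh(\rho_*(G\boxtimes H), F^{RS})
$$
for $G\in D^b_{\rc}(\CC_X)$, $H\in D^b_{\rc}(\CC_S)$ and $F\in D^b(\CC_{(X \times S)_{sa}})$. The plan is to invoke Lemma \ref{L:28} directly, checking that its two hypotheses are met by the data at hand. Recall Lemma \ref{L:28} asserts $\eta^{-1}\rh(\rho_*'\mathcal{G}, \mathcal{F})\simeq \rh(\rho_*\mathcal{G},\eta^{-1}\mathcal{F})$ whenever $\mathcal{F} \in D^b(\CC_{Y_{\T'}})$ and $\mathcal{G} \in D^b(\coh(\T'))$, where here $Y = X \times S$.

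First I would set $\mathcal{F} = F^{RS,\sharp}$. By construction $(\cdot)^{RS,\sharp}$ is the right derived functor of the left exact endofunctor $(\cdot)^{S,\sharp}$ taking values in $\mod(\CC_{Y_{\T'}}) = \mod(\CC_{X_{sa}\times S_{sa}})$, so $F^{RS,\sharp} \in D^b(\CC_{Y_{\T'}})$ — the boundedness being inherited since $F$ is bounded and the derived functor has finite cohomological dimension in this setting. Then, by definition \eqref{E:10} of the derived functor, $\eta^{-1}F^{RS,\sharp} = F^{RS}$, which is exactly the right-hand side factor we want. Second I would set $\mathcal{G} = G\boxtimes H$. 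The point is that $G \boxtimes H = \imin{p_1}G \otimes \imin{p_2}H$ lies in $D^b(\coh(\T'))$: indeed $G$ and $H$ are $\R$-constructible on $X$ and $S$, so replacing each by an almost free resolution (in the sense of \cite{KS1}) built from sheaves $\CC_{U_i}$, $\CC_{V_j}$ with $U_i \in \op(X_{sa})$, $V_j \in \op(S_{sa})$ relatively compact (one may truncate to the relatively compact case by an exhaustion argument, or use that $\T'$-coherence is a local-on-covers condition), their exterior box product is computed by the box products $\CC_{U_i}\boxtimes\CC_{V_j} = \CC_{U_i\times V_j}$, and each $U_i\times V_j \in \T'$. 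This is essentially the same argument already used in the proof of Lemma \ref{L:26}. Hence $\rho_*'(G\boxtimes H)$ and $\rho_*(G\boxtimes H)$ are the images under $\rho'_*$, $\rho_*$ of an object of $D^b(\coh(\T'))$, as required by Lemma \ref{L:28} (and consistent with the notational convention identifying $D^b(\coh(\T'))$ with its image).

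With both hypotheses verified, Lemma \ref{L:28} applied to $\mathcal{F} = F^{RS,\sharp}$ and $\mathcal{G} = G\boxtimes H$ gives
$$
\eta^{-1}\rh(\rho_*'(G\boxtimes H), F^{RS,\sharp}) \simeq \rh(\rho_*(G\boxtimes H), \eta^{-1}F^{RS,\sharp}) = \rh(\rho_*(G\boxtimes H), F^{RS}),
$$
which is the claim. The only genuine point requiring care — and thus the main obstacle — is the verification that $G \boxtimes H$ belongs to $D^b(\coh(\T'))$ rather than merely to $D^b_{\rc}$ of the product: one must exhibit a bounded resolution by finite sums of $\CC_W$ with $W\in\T'$, and handle the relatively-compact constraint on the $W$'s. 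Since $\T'$-coherence can be checked locally on a finite cover by elements of $\T'$, and $X\times S$ is covered by such, restricting the almost free resolution of $G\boxtimes H$ to each relatively compact $U\times V \in \T'$ produces a resolution of the required shape there; this suffices, and it is exactly the mechanism underlying Lemma \ref{L:26}, so no new difficulty arises.
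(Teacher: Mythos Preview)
Your approach is correct and is exactly the paper's: the proposition is stated immediately after the phrase ``According to Lemma \ref{L:28} we get'', so the intended proof is precisely the direct application of Lemma \ref{L:28} with $\mathcal{F}=F^{RS,\sharp}$ and $\mathcal{G}=G\boxtimes H$, together with the identity $\imin\eta F^{RS,\sharp}=F^{RS}$. You have in fact been more careful than the paper in flagging the one genuine point---that Lemma \ref{L:28} requires $\mathcal{G}\in D^b(\coh(\T'))$, whereas $G\boxtimes H$ is a priori only $\R$-constructible---and your resolution (localize, use almost free resolutions as in Lemma \ref{L:26}, and note the isomorphism is between sheaves so may be checked on a basis of $\T'$-opens) is the right way to close that gap.
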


 The following Lemmas are steps to prove Proposition \ref{lem 9} below:

\begin{lem} \label{lem 5}Let $U \in \op(X_{sa})$, $V \in \op(S_{sa})$. Then
\begin{eqnarray*}
\Gamma(U \times V; F^{S}) & \simeq & \Gamma(X \times V;\imin\rho\Gamma_{U \times S}F) \\
& \simeq & \Ho(\CC_U \boxtimes \rho_!\CC_V,F).
\end{eqnarray*}
\end{lem}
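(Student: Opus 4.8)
The statement to prove is Lemma \ref{lem 5}: for $U\in\op(X_{sa})$ and $V\in\op(S_{sa})$,
$$
\Gamma(U\times V;F^S)\simeq\Gamma(X\times V;\imin\rho\Gamma_{U\times S}F)\simeq\Ho(\CC_U\boxtimes\rho_!\CC_V,F).
$$
The second isomorphism is already recorded in the very definition of $F^{S,\sharp}$ (the chain of isomorphisms displayed just before \eqref{E:10}), so the real content is the first one, namely that taking sections of $F^S=\imin\eta F^{S,\sharp}$ over the ``rectangle'' $U\times V$ recovers the value of the defining presheaf of $F^{S,\sharp}$ on that rectangle. First I would observe that $U\times V$, being relatively compact subanalytic of product type, lies in $\op(X_{sa}\times S_{sa})=\op(Y_{\T'})$; in fact after intersecting with a relatively compact subanalytic open we may even assume $U\times V\in\T'$, or argue directly with $U\times V\in\op(Y_{\T'})$.

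The key step is to apply Lemma \ref{lem: section on product0} with the $\T'$-sheaf $F^{S,\sharp}$ and the open set $W=U\times V\in\op(Y_{\T'})$: that lemma gives
$$
\Gamma(U\times V;\eta^{-1}F^{S,\sharp})\simeq\Gamma(U\times V;F^{S,\sharp}).
$$
The left-hand side is exactly $\Gamma(U\times V;F^S)$ by the definition \eqref{E:10}. For the right-hand side I must identify $\Gamma(U\times V;F^{S,\sharp})$ with the value on $U\times V$ of the presheaf defining $F^{S,\sharp}$. This is the standard point that sheafification does not change sections over a member of the base when that presheaf is already separated and satisfies a suitable local condition — but here one can be more economical: since $F^{S,\sharp}$ is defined as the sheaf associated to the presheaf $U\times V\mapsto \Ho(\CC_U\boxtimes\rho_!\CC_V,F)$, and the sections of a $\T'$-sheaf over $W\in\op(Y_{\T'})$ are computed (as in the proof of Lemma \ref{lem: section on product0}) by first covering $W$ by a countable increasing family of relatively compact $W_n\in\cov(Y_{\T'})$ and then taking $\varprojlim_n$, it suffices to check the presheaf is already a sheaf on relatively compact rectangles. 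For those, the agreement of the presheaf with its sheafification amounts to the gluing property: given a covering of a relatively compact $U\times V$ by finitely many $U_i\times V_i\in\T'$, sections of $\imin\rho\Gamma_{U\times S}F$ over $X\times V$ glue — which follows since $\imin\rho\Gamma_{U\times S}F$ is an honest sheaf on $X\times S$ and $\{X\times V_i\}$ covers $X\times V$, so $\Ho(\CC_U\boxtimes\rho_!\CC_V,F)=\Gamma(X\times V;\imin\rho\Gamma_{U\times S}F)$ already has the sheaf property in the $V$-variable.

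So the argument runs: (1) reduce to $U\times V$ relatively compact subanalytic, in $\op(Y_{\T'})$; (2) invoke Lemma \ref{lem: section on product0} to pass from $\eta^{-1}F^{S,\sharp}$ to $F^{S,\sharp}$ on such $W$; (3) check that on relatively compact product opens the defining presheaf of $F^{S,\sharp}$ is separated and satisfies gluing for finite product-type covers, using that $\imin\rho\Gamma_{U\times S}F$ is a genuine sheaf on $X\times S$, hence its sheafification has the claimed sections; (4) read off $\Gamma(U\times V;F^{S,\sharp})\simeq\Gamma(X\times V;\imin\rho\Gamma_{U\times S}F)\simeq\Ho(\CC_U\boxtimes\rho_!\CC_V,F)$ from the displayed chain preceding \eqref{E:10}. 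The main obstacle I anticipate is bookkeeping in step (3): one must be careful that ``covering'' in $Y_{\T'}$ means \emph{finite} subcovers and that the relevant sections are the filtrant colimit $\varprojlim_{W\subset\subset V}\Gamma(U\times W;F)$, so that checking the sheaf axiom reduces cleanly to the ordinary sheaf property of $F$ on the quasi-compact pieces — but no genuinely hard estimate is involved, everything is a matter of unwinding the $\T'$-topology and the definition of $\rho_!$.
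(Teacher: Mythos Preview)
Your overall framework matches the paper's: reduce via Lemma~\ref{lem: section on product0} to showing that the sheafification $F^{S,\sharp}$ agrees with the defining presheaf on rectangles $U\times V$, then treat the relatively compact case first and pass to the general case by exhaustion. The second isomorphism is indeed immediate by adjunction.

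The gap is in your step (3). You claim that the presheaf $P(U\times V)=\Gamma(X\times V;\imin\rho\Gamma_{U\times S}F)$ satisfies gluing for finite $\T'$-covers because ``$\imin\rho\Gamma_{U\times S}F$ is an honest sheaf and $\{X\times V_i\}$ covers $X\times V$''. But a $\T'$-cover of $U\times V$ is by rectangles $U_i\times V_i$ with \emph{both} factors varying, and the datum $s_i$ lives in $\Gamma(X\times V_i;\imin\rho\Gamma_{U_i\times S}F)$, i.e.\ a section of a \emph{different} sheaf for each $i$. Your argument only treats covers of the form $\{U\times V_i\}$ with $U$ fixed; for those it is correct, but it does not suffice. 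More concretely, to produce an element of $\varprojlim_{W\subset\subset V}\Gamma(U\times W;F)$ one must, for each $W\subset\subset V$, glue sections of $F$ on a cover of $U\times W$. The natural candidates $U_i\times(W\cap V_i)$ fail because $W\cap V_i$ need not be relatively compact in $V_i$, so $s_i$ does not a priori yield a section there.

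This is precisely the difficulty the paper's proof addresses: one invokes a subanalytic shrinking lemma (Lemma~3.6 of \cite{vdD}) to refine $\{V_i\}$ to $\{V'_i\}$ with $\overline{V'_i}\cap V\subset V_i$. Then $W\subset\subset V$ forces $W\cap V'_i\subset\subset V_i$, so the restriction $P(U_i\times V_i)\to P(U_i\times V'_i)$ factors through $\varprojlim_{W\subset\subset V}\Gamma(U_i\times(W\cap V'_i);F)\simeq\Gamma(X\times V;\imin\rho\Gamma_{U_i\times V'_i}F)$, and one can glue via the exact sequence coming from $\oplus\,\CC_{U_{ij}\times V'_{ij}}\to\oplus\,\CC_{U_k\times V'_k}\to\CC_{U\times V}\to 0$. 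Without this shrinking step (or something equivalent), your step (3) does not go through.
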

\begin{proof} The second isomorphism follows by adjunction. Let us prove the first one. By $(2)$ of Lemma \ref{lem: coh acyclic} it is enough to check that $\Gamma(U \times V;F^{S,\sharp})=\Gamma(X \times V;\imin\rho\Gamma_{U \times S}F).$

1) We first suppose that $U \times V$ is relatively compact. Let $s \in \Gamma(U \times V; F^{S,\sharp})$. Then $s$ is defined by a finite family
$s_i \in
\lpro {W_i \subset\subset V_i} \Gamma(U_i \times W_i;F)$, $i \in I$ where $\{U_i\}$ (resp. $\{V_i\}$) is a covering of $U$ (resp. $V$) in $X_{sa}$ (resp. $S_{sa}$), such that $s_i = s_j$ on $(U_i \times V_i) \cap (U_j \times V_j)$.

By Lemma 3.6 of \cite{vdD}, there exists a
refinement $\{V'_i\}$ of $\{V_i\}$ in $S_{sa}$ such that
$\overline{V'_i} \cap V \subset V_i$. Now we have the following obvious facts:
\begin{itemize}
\item[(i)] If, for a given $W'_i \in \op(S_{sa})$,  $W'_i \subset\subset V'_i$, then $W'_i \subset\subset V$,
\item[(ii)] If, for a given $W\in\op(S_{sa})$, $W \subset\subset V $, then $V'_i \cap W \subset\subset V_i$.
\end{itemize}
This implies that the restriction $\lpro {W_i \subset\subset V_i} \Gamma(U_i \times W_i;F) \to \lpro {W'_i \subset\subset V'_i} \Gamma(U_i \times W'_i;F)$ factors through $\lpro {W \subset\subset V}\Gamma(U_i \times (W \cap V'_i);F)$.
Therefore $s|_{{U_i}\times V'_i}$  extends to a section  of
$\Gamma(X\times V;\imin\rho\Gamma_{U_i \times V'_i}F) \simeq \lpro {W \subset\subset V}\Gamma(U_i \times (W \cap V'_i);F)$. Set $U_{ij}=U_i \cap U_j$ and $V'_{ij}=V'_i \cap V'_j$.
The exact sequence
$$
\oplus_{i \neq j \in I}\CC_{U_{ij} \times V'_{ij}} \to \oplus_{k \in I}\CC_{U_k \times V'_k} \to \CC_{U \times V} \to 0
$$
defines an exact sequence
$$
\lexs{\Gamma(X \times V;\imin \rho\Gamma_{U \times V}F)}{\oplus_k\Gamma(X \times V;\imin\rho\Gamma_{U_k \times V'_k}F)}{\oplus_{i \neq j}\Gamma(X \times V;\imin\rho\Gamma_{U_{ij} \times V'_{ij}F}}).
$$
Then the $s_i$'s glue to a section of
$\Gamma(X \times V;\imin \rho\Gamma_{U \times V}F) \simeq \Gamma(X \times V;\imin\rho\Gamma_{U \times S}F)$ as required.


2) Suppose that $U \in \op(X_{sa})$ and $V \in \op^c(S_{sa})$. Then $U=\bigcup_{n \in \N}(U \cap U_n)$ where $\{U_n\}_{n \in \N}$ belongs to $\cov(X_{sa})$ and satisfies $U_n\subset\subset U_{n+1}$. Then
\begin{eqnarray*}
\Gamma(U \times V;F^{S,\sharp}) & \simeq & \lpro n \Gamma(U_n \times V;F^{S,\sharp})
\\
& \simeq & \lpro n \Gamma(X \times V;\imin\rho\Gamma_{U_n \times S}F) \\
&  \simeq & \Gamma(X \times V;\imin\rho\lpro n \Gamma_{U_n \times S}F)\\
&  \simeq & \Gamma(X \times V;\imin\rho\Gamma_{U \times S}F).
\end{eqnarray*}

3) Now consider the general case. Let $s \in \Gamma(U \times V; F^{S,\sharp})$. It is defined by a countable family $s_n \in \Gamma(U \times V_n; F^{S,\sharp})=\Gamma(X \times V_n;\imin\rho\Gamma_{U \times S}F)$ where $\{V_n\}_{n \in \N}$ is a covering of $V$ in $S_{sa}$ such that $\overline{V}_n \cap V \subset V_{n+1}$. Then there exists a refinement $\{V'_n\}$ of $\{V_n\}$ in $S_{sa}$ with $V_{n-1} \subset \overline{V'}_n \cap V \subset V_n$. Arguing as in 1), the restriction $s_n|_{U \times V'_n}$ belongs to $\Gamma(X \times V; \imin\rho\Gamma_{U \times V'_n}F)$ and $s_n=s_{n+1}$ on $U \times V'_n$. Hence they glue to $s \in \Gamma(X \times V;\imin\rho\Gamma_{U \times V}F) \simeq \Gamma(X \times V;\imin\rho\Gamma_{U \times S}F)$ as required.
\end{proof}

With Proposition 6.5.1 of \cite{KS3} (applied on $X$ and $S$ separately) as a tool we now prove the following result:

\begin{lem} \label{lem 6} Let $G \in \mod_{\rc}(\CC_X)$, $H \in \mod_{\rc}(\CC_S)$. Let $F \in \mod(\CC_{(X \times S)_{sa}})$.
Then
$$
\Ho(G \boxtimes H,F^S) \simeq \Ho(G \boxtimes \rho_!H,F)\simeq \Ho(\CC_X \boxtimes H,\imin\rho\ho(G \boxtimes \CC_S,F)) .
$$
\end{lem}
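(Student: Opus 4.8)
The plan is to reduce the statement to the case of elementary generators and then invoke Lemma~\ref{lem 5}. Since $G \in \mod_{\rc}(\CC_X)$ and $H \in \mod_{\rc}(\CC_S)$, Proposition 6.5.1 of \cite{KS3} (applied separately on $X$ and on $S$) allows us to write $G$ as a quotient of a finite sum of sheaves $\CC_{U_k}$ with $U_k \in \op^c(X_{sa})$, and similarly $H$ as a quotient of a finite sum of sheaves $\CC_{V_l}$ with $V_l \in \op^c(S_{sa})$. Iterating, one obtains a (possibly infinite to the left, but that is harmless for computing $\Ho$) resolution of $G \boxtimes H$ by finite sums of sheaves of the form $\CC_{U \times V} = \CC_U \boxtimes \CC_V$ with $U \in \op^c(X_{sa})$, $V \in \op^c(S_{sa})$. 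Applying the left exact functors $\Ho(\cdot, F^S)$, $\Ho(\cdot, \rho_! H')$... — more precisely, applying $\Ho(\cdot,-)$ termwise and using that $\Ho$ turns the colimit presentation into a limit — it suffices to establish all three isomorphisms for $G \boxtimes H = \CC_U \boxtimes \CC_V$ and to check that they are compatible with the morphisms of the resolution (i.e.\ functorial in $G \boxtimes H$), which is clear from the constructions.

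For the base case $G \boxtimes H = \CC_U \boxtimes \CC_V$, the first isomorphism $\Ho(\CC_U \boxtimes \CC_V, F^S) \simeq \Ho(\CC_U \boxtimes \rho_! \CC_V, F)$ is exactly the content of Lemma~\ref{lem 5}, since $\Ho(\CC_U \boxtimes \CC_V, F^S) \simeq \Gamma(U \times V; F^S)$ and Lemma~\ref{lem 5} identifies this with $\Ho(\CC_U \boxtimes \rho_! \CC_V, F)$. For the second isomorphism, I would use adjunction together with the identity $\rho_!\CC_V \simeq$ (the sheaf associated to $U' \mapsto \CC_V(\overline{U'})$) recalled in Section~\ref{S:1}: on the one hand $\Ho(\CC_U \boxtimes \rho_!\CC_V, F) \simeq \Ho(\CC_U, \ho(\rho_!\CC_V, F))$ by the $\boxtimes$--$\ho$ adjunction, but it is cleaner to go through inverse images, writing $\Ho(\CC_U \boxtimes \rho_!\CC_V, F) \simeq \Gamma(X \times S; \ho(\CC_U \boxtimes \CC_S, F) \otimes (\CC_X \boxtimes \rho_!\CC_V))$ and then peeling off the $S$-factor. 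Concretely, since $\rho_!\CC_V$ only involves the $S$-direction, $\ho(\CC_X \boxtimes \rho_!\CC_V, \imin\rho\ho(\CC_U \boxtimes \CC_S, F))$ computed over $X \times V$ recovers $\Ho(\CC_X \boxtimes \CC_V, \imin\rho\ho(\CC_U \boxtimes \CC_S, F))$, and Lemma~\ref{lem 5} (first isomorphism) relates the right-hand side $\Gamma(X \times V; \imin\rho\Gamma_{U \times S}F)$ to the middle term; identifying $\imin\rho\Gamma_{U \times S}F$ with $\imin\rho\ho(\CC_U \boxtimes \CC_S, F)$ (which holds since $\Gamma_{Z} \simeq \ho(\CC_Z, -)$ and $\imin\rho$ commutes with $\ho(\CC_W,-)$ for $W$ subanalytic) gives the third term.

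The main obstacle is the bookkeeping of the second and third isomorphisms for general $G, H$, namely making sure the functorial identification survives passage through the resolution. Two points require care. First, the resolution of $G \boxtimes H$ by sheaves $\CC_{U\times V}$ may be infinite, so one must know that $\Ho(\cdot, F^S)$, $\Ho(\cdot, F)$ and $\Ho(\cdot, \imin\rho\ho(G\boxtimes\CC_S,F))$ all behave well enough — but since we only assert isomorphisms of the groups $\Ho$ (degree zero), and each of these functors is left exact and sends the relevant finite sums to products, a truncated two-term piece $A^1 \to A^0 \to G \boxtimes H \to 0$ of the resolution suffices, and left exactness does the rest. Second, one must check that $\rho_!(\cdot)$ and the formation of $\imin\rho\ho(G \boxtimes \CC_S, -)$ are compatible with the maps $\CC_{U_1 \times V_1} \to \CC_{U_0 \times V_0}$ appearing in the resolution; this is automatic since both are functors. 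So the proof is essentially: resolve, reduce to generators, apply Lemma~\ref{lem 5} three times with the appropriate identifications $\Gamma_{U \times S} \simeq \ho(\CC_U \boxtimes \CC_S, -)$ and $\rho_!\CC_V \simeq$ its presheaf description, and conclude by left exactness.
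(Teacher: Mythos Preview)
Your reduction to generators via Lemma~\ref{lem 5} is exactly what the paper does in its first step, and the second isomorphism is indeed a straightforward adjunction (the paper dispatches it in one line). However, there is a genuine gap in your argument.

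You claim that Proposition~6.5.1 of \cite{KS3} lets you write an arbitrary $G \in \mod_{\rc}(\CC_X)$ as a quotient of a \emph{finite} sum $\bigoplus_k \CC_{U_k}$ with $U_k \in \op^c(X_{sa})$. This is false unless $G$ has compact support: any such quotient has support contained in the compact set $\bigcup_k \overline{U_k}$, so for instance $\CC_X$ itself admits no such presentation when $X$ is non-compact. Proposition~6.5.1 of \cite{KS3} is a statement about $\mod_{\rc}^c(\CC_X)$, the category of $\R$-constructible sheaves with \emph{compact} support, and your resolution argument only establishes the lemma in that case.

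The paper's proof handles this in a separate step: once the compactly supported case is known, one takes exhaustions $U_n \subset\subset U_{n+1}$ of $X$ and $V_n \subset\subset V_{n+1}$ of $S$ and writes $\Ho(G \boxtimes H, F^S) \simeq \varprojlim_n \Ho(G_{U_n} \boxtimes H_{V_n}, F^S)$. Applying the compact-support case termwise gives $\varprojlim_n \Ho(G_{U_n} \boxtimes \rho_!(H_{V_n}), F)$. The subtle point, which your argument does not touch, is that $\rho_!(H_{V_n})$ is \emph{not} the same as $(\rho_! H)_{V_n}$, so one cannot immediately rewrite this limit as $\Ho(G \boxtimes \rho_! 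H, F)$. The paper observes that the transition map $(\rho_! H)_{V_n} \to (\rho_! H)_{V_{n+1}}$ factors through $\rho_!(H_{V_{n+1}}) \simeq \varinjlim_{W \subset\subset V_{n+1}} (\rho_! H)_W$, and this interleaving of the two inverse systems is what makes the limits agree. Without this step your argument is incomplete.
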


\begin{proof} The right hand isomorphism follows by adjunction. Let us prove the left hand isomorphism.

1) Suppose at first that $G$ and $H$ have compact support. By Proposition 6.5.1 of \cite{KS3} the functor $U \mapsto \Ho(\CC_U \boxtimes \rho_!\CC_V,F)\simeq\Ho(\CC_U \boxtimes \CC_V,F^S)$ extends uniquely to a functor $\mod^c_{\rc}(\CC_X) \to \mod(\CC)$. This implies $$\Ho(G \boxtimes \rho_!\CC_V,F) \simeq \Ho(G \boxtimes \CC_V,F^S).$$
Similarly, the functor $V \mapsto \Ho(G \boxtimes \rho_!\CC_V,F)\simeq\Ho(G \boxtimes \CC_V,F^S)$  extends uniquely to a functor $\mod^c_{\rc}(\CC_S) \to \mod(\CC)$. This implies $$\Ho(G \boxtimes \rho_!H,F) \simeq \Ho(G \boxtimes H,F^S).$$

2) Let us consider the general case. Let $\{U_n\}_{n \in \N}$ (resp. $\{V_n\}_{n \in \N}$ be a covering of $X_{sa}$ (resp. $S_{sa}$) such that $U_n \subset\subset U_{n+1}$ (resp. $V_n \subset\subset V_{n+1}$) for each $n$. We have
\begin{eqnarray*}
\Ho(G \boxtimes H,F^S) & \simeq & \lpro n\Ho(G_{U_n} \boxtimes H_{V_n},F^S) \\
& \simeq & \lpro n\Ho(G_{U_n} \boxtimes \rho_!(H_{V_n}),F) \\
& \simeq & \lpro n\Ho(G_{U_n} \boxtimes (\rho_!H)_{V_n},F) \\
& \simeq & \lpro n\Gamma(U_n \times V_n;\ho(G \boxtimes \rho_!H,F))
\\
& \simeq & \Gamma(X;\ho(G \boxtimes \rho_!H,F)) \\
& \simeq & \Ho(G \boxtimes \rho_!H,F).
\end{eqnarray*}
The second isomorphism follows from 1).To prove the third one we remark that the morphism $(\rho_!H)_{V_n} \to (\rho_!H)_{V_{n+1}}$ factors through $\rho_!(H_{V_{n+1}}) \simeq \lind {W \subset\subset V_{n+1}} (\rho_!H)_W$. The desired isomorphism then follows passing to the limit on $n \in \N$.
\end{proof}


We shall now prepare the steps to the main result of this note, Proposition \ref{lem 9} below. Recall (cf \cite{EP}) that $F \in \mod(\CC_{(X_{sa} \times S_{sa})})$ is $\T'$-flabby if the restriction morphism $\Gamma(X \times S;F) \to \Gamma(W;F)$ is surjective for each $W \in \T'$. $\T'$-flabby objects are $\Ho(G,\cdot)$-acyclic for each $G \in \coh(\T')$.

\begin{lem} \label{lem 6 1/2} Let $F \in \mod(\CC_{(X \times S)_{sa}})$ be injective. Then $F^{S,\sharp}$ is $\T'$-flabby.
\end{lem}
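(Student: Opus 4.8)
The plan is to show that for every $W=U\times V\in\T'$ the restriction map $\Gamma(X\times S;F^{S,\sharp})\to\Gamma(W;F^{S,\sharp})$ is surjective, and then deduce the general case of $W\in\T'$ a finite union of such products by a standard Mayer--Vietoris / induction argument. By Lemma~\ref{lem 5} we have the identification
$$
\Gamma(U\times V;F^{S,\sharp})\simeq\Gamma(X\times V;\imin\rho\,\Gamma_{U\times S}F)\simeq\Ho(\CC_U\boxtimes\rho_!\CC_V,F),
$$
and similarly $\Gamma(X\times S;F^{S,\sharp})\simeq\Ho(\CC_X\boxtimes\rho_!\CC_S,F)\simeq\Ho(\CC_X\boxtimes\CC_{S_\T},F)$ (using that $\rho_!\CC_S\simeq\CC_{S_\T}$ since $S$ itself, if relatively compact, is in $\op^c(S_{sa})$; in general one works with an exhaustion). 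So the statement to be proved becomes: the map
$$
\Ho(\CC_X\boxtimes\rho_!\CC_S,F)\longrightarrow\Ho(\CC_U\boxtimes\rho_!\CC_V,F)
$$
is surjective whenever $F$ is injective in $\mod(\CC_{(X\times S)_{sa}})$.

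The key point is that this restriction map is induced by a monomorphism of subanalytic sheaves $\CC_U\boxtimes\rho_!\CC_V\hookrightarrow\CC_X\boxtimes\rho_!\CC_S$ on $(X\times S)_{sa}$: indeed $\rho_!$ is exact and fully faithful, $\imin{p_1}\CC_U\hookrightarrow\imin{p_1}\CC_X$ and $\imin{p_2}\rho_!\CC_V\hookrightarrow\imin{p_2}\rho_!\CC_S$ are monomorphisms, and the external tensor product of monomorphisms of sheaves of $\CC$-modules stays a monomorphism. Since $F$ is injective, the contravariant functor $\Ho(\,\cdot\,,F)$ on $\mod(\CC_{(X\times S)_{sa}})$ is exact, hence it sends this monomorphism to an epimorphism. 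That gives the desired surjectivity for $W=U\times V$, and then $\Ho(G,\cdot)$-acyclicity for $G\in\coh(\T')$ follows from the general machinery recalled before the statement (a $\T'$-flabby sheaf is $\Ho(G,\cdot)$-acyclic).

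For a general $W\in\T'$, write $W=\bigcup_{k=1}^m(U_k\times V_k)$. Using that $F^{S,\sharp}$ is a sheaf on $X_{sa}\times S_{sa}$, sections over $W$ are computed by the equalizer of $\prod_k\Gamma(U_k\times V_k;F^{S,\sharp})\rightrightarrows\prod_{k,l}\Gamma((U_k\times V_k)\cap(U_l\times V_l);F^{S,\sharp})$; since each intersection $(U_k\times V_k)\cap(U_l\times V_l)=(U_k\cap U_l)\times(V_k\cap V_l)$ is again a product in $\T'$, one proves surjectivity of $\Gamma(X\times S;F^{S,\sharp})\to\Gamma(W;F^{S,\sharp})$ by induction on $m$: the inductive step is the exactness of
$$
\exs{\CC_{(U_m\times V_m)\cap W'}}{\CC_{U_m\times V_m}\oplus\CC_{W'}}{\CC_W}
$$
with $W'=\bigcup_{k<m}U_k\times V_k$, to which one applies the exact functor $\Ho(\,\cdot\,,F)$ together with Lemma~\ref{lem 5} to identify all the terms. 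The main obstacle, and the step deserving care, is precisely the bookkeeping that identifies $\Gamma$ of $F^{S,\sharp}$ over these subanalytic opens with $\Ho(\,\cdot\,,F)$-groups compatibly with restriction — so that the sheaf-theoretic monomorphisms on $(X\times S)_{sa}$ really do induce the restriction maps of $F^{S,\sharp}$ — but all the needed identifications are supplied by Lemma~\ref{lem 5} and by $(2)$ of Lemma~\ref{lem: coh acyclic}; once they are in place the argument is just exactness of $\Ho(\,\cdot\,,F)$ for injective $F$.
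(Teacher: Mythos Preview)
Your base case is fine and matches the paper: for a single product $U\times V$, Lemma~\ref{lem 5} gives $\Gamma(U\times V;F^{S,\sharp})\simeq\Ho(\CC_U\boxtimes\rho_!\CC_V,F)$, and the monomorphism $\CC_U\boxtimes\rho_!\CC_V\hookrightarrow\CC_X\boxtimes\rho_!\CC_S$ together with injectivity of $F$ yields surjectivity of the restriction.

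The inductive step, however, has a genuine gap. The short exact sequence you write,
$$
\exs{\CC_{(U_m\times V_m)\cap W'}}{\CC_{U_m\times V_m}\oplus\CC_{W'}}{\CC_W},
$$
lives in $\mod(\CC_{(X\times S)_{sa}})$, and applying $\Ho(\,\cdot\,,F)$ to it produces $\Gamma(W;F)$, $\Gamma(W';F)$, etc.---sections of $F$, not of $F^{S,\sharp}$. Lemma~\ref{lem 5} does \emph{not} identify $\Gamma(W';F^{S,\sharp})$ with $\Ho(\CC_{W'},F)$; it only treats products, and there the representing object is $\CC_U\boxtimes\rho_!\CC_V$, not $\CC_{U\times V}$. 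The objects $\CC_{U_i}\boxtimes\rho_!\CC_{V_i}$ do not satisfy a naive Mayer--Vietoris sequence for unions in $\T'$, so you cannot simply ``identify all the terms'' via Lemma~\ref{lem 5}.

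What is actually needed---and what the paper does---is to construct for each $W=\bigcup_{i=1}^n(U_i\times V_i)$ an explicit object
$$
K_n=\lind{W_1\subset\subset V_1}\cdots\lind{W_n\subset\subset V_n}\rho_*\CC_{(U_1\times W_1)\cup\cdots\cup(U_n\times W_n)}
$$
in $\mod(\CC_{(X\times S)_{sa}})$ and prove, by induction on $n$, that $\Gamma(W;F^S)\simeq\Ho(K_n,F)$. The inductive step uses a short exact sequence $\exs{K'_{n-1}}{K_{n-1}\oplus(\CC_{U_n}\boxtimes\rho_!\CC_{V_n})}{K_n}$ with $K'_{n-1}=K_{n-1}\otimes(\CC_{U_n}\boxtimes\rho_!\CC_{V_n})$, together with the induction hypothesis applied to both $K_{n-1}$ and $K'_{n-1}$. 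Only once this identification is in hand does the monomorphism $K_n\hookrightarrow\CC_X\boxtimes\CC_S$ (visible from the explicit formula) give the desired surjectivity via injectivity of $F$. Your sketch points in the right direction but substitutes the wrong representing objects; carrying it through requires exactly the $K_n$ construction.
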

\begin{proof}

Let us consider $W=\bigcup_{i=1}^n (U_i \times V_i)$, with $U_i \in \op^c(X_{sa})$ and $V_i \in \op^c(S_{sa})$. For $i \in \{1,\dots,n\}$, set
\begin{eqnarray*}
K_{i} & = & \lind {W_1 \subset\subset V_1} \cdots \lind {W_i \subset\subset V_i}\rho_*\CC_{(U_1 \times W_1) \cup \cdots \cup (U_i \times W_i)}.
\end{eqnarray*}

1) We first prove that
$$
\Gamma\left(W;F^S\right) \simeq \Ho(K_n,F).$$

We argue by induction on $n$.

For $n=1$ the result follows from Lemma \ref{lem 5}.

$n-1 \Rightarrow n$: Set $K'_{n-1} = K_{n-1} \otimes (\CC_{U_n} \boxtimes \rho_!\CC_{V_n})$. We have
\begin{eqnarray*}
K'_{n-1} & \simeq & \lind {\substack{W_1 \subset\subset V_1 \\ W_n \subset\subset V_n}} \cdots \lind {\substack{W_{n-1} \subset\subset V_{n-1} \\ W_n \subset\subset V_n}}\rho_*\CC_{((U_1 \cap U_n) \times (W_1 \cap W_n)) \cup \cdots \cup ((U_{n-1} \cap U_n) \times (W_{n-1} \cap W_n))} \\
& \simeq & \lind {W'_1 \subset\subset V_1 \cap V_n} \cdots \lind {W'_{n-1} \subset\subset V_{n-1} \cap V_n}\rho_*\CC_{((U_1 \cap U_n) \times W'_1) \cup \cdots \cup ((U_{n-1} \cap U_n) \times W'_{n-1})}.
\end{eqnarray*}
We have an exact sequence
$$
\exs{K'_{n-1}}{K_{n-1} \oplus (\CC_{U_n} \boxtimes \rho_!\CC_{V_n})}{K_n}.
$$
Applying the functor $\Ho(\cdot,F)$ and using the induction hypothesis on $K'_{n-1}$ and $K_{n-1}$ we obtain
\begin{eqnarray*}
\Gamma\left(\bigcup_{i=1}^{n-1} ((U_i \cap U_n) \times (V_i \cap V_n));F^S\right) & \simeq & \Ho(K'_{n-1},F) \\
\Gamma\left(\bigcup_{i=1}^{n-1} (U_i \times V_i);F^S\right) & \simeq & \Ho(K_{n-1},F).
\end{eqnarray*}
Hence $\Gamma\left(\bigcup_{i=1}^n (U_i \times V_i);F^S\right) \simeq \Ho(K_n,F)$, as required.

2) Consider the monomorphism $0 \to K_n \to \CC_X \boxtimes \CC_S$. Since $F$ is injective we obtain a surjection
$$
\Ho(\CC_X \boxtimes \CC_S,F) \to \Ho(K_n,F) \to 0
$$
and the result follows.
\end{proof}

\begin{cor} \label{cor 6 1/2}  Let $G \in D^b_{\rc}(\CC_X)$, $H \in D^b_{\rc}(\CC_S)$. Let $F \in \mod(\CC_{(X \times S)_{sa}})$ be injective. Then $F^S$ is $\Ho(G \boxtimes H, \cdot)$-acyclic.
\end{cor}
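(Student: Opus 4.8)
The plan is to reduce the claim to the combination of Lemma \ref{lem 6 1/2}, which gives $\T'$-flabbiness of $F^{S,\sharp}$ for $F$ injective, with the standard fact recalled just above Lemma \ref{lem 6 1/2} that $\T'$-flabby objects are $\Ho(K,\cdot)$-acyclic for every $K \in \coh(\T')$. The bridge between the two sides is the identity $(\cdot)^S = \imin\eta \circ (\cdot)^{S,\sharp}$ together with Lemma \ref{lem 6}: for $G \in \mod_{\rc}(\CC_X)$ and $H \in \mod_{\rc}(\CC_S)$ one has $\Ho(G \boxtimes H, F^S) \simeq \Ho(\rho'_*(G \boxtimes H), F^{S,\sharp})$ (the right-hand side being computed on $X_{sa}\times S_{sa}$, using that $\eta_*$ on $\coh(\T')$ identifies $\rho_*$ with $\rho'_*$), and $\rho'_*(G \boxtimes H) \in \coh(\T')$ since $G \boxtimes H$ is $\R$-constructible and $\rho'_*$ is exact and fully faithful on $\coh(\T')$ by Corollary \ref{C:T}.

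First I would take an injective resolution $F \to I^\bullet$ in $\mod(\CC_{(X \times S)_{sa}})$ and observe, via the natural isomorphism $\Ho(G \boxtimes H, (\cdot)^S) \simeq \Ho(\rho'_*(G\boxtimes H), (\cdot)^{S,\sharp})$, that it suffices to show each $(I^j)^{S,\sharp}$ is $\Ho(\rho'_*(G \boxtimes H),\cdot)$-acyclic. By Lemma \ref{lem 6 1/2} each $(I^j)^{S,\sharp}$ is $\T'$-flabby, and $\T'$-flabby objects are acyclic for $\Ho(K,\cdot)$ with $K \in \coh(\T')$; this handles the case where $G,H$ are single $\R$-constructible sheaves.

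To pass from sheaves to bounded complexes $G \in D^b_{\rc}(\CC_X)$, $H \in D^b_{\rc}(\CC_S)$, I would argue by dévissage: $G \boxtimes H$ has a finite filtration whose graded pieces are shifts of $G^i \boxtimes H^j$ with $G^i, H^j \in \mod_{\rc}$, and acyclicity is stable under the corresponding distinguished triangles and shifts. Concretely, one checks that $\Rh(G \boxtimes H, (F^S))$ is computed by the non-derived $\Ho(G \boxtimes H, (I^\bullet)^S)$ and that the spectral sequence (or the finite dévissage on $G$ and then on $H$) collapses because the building blocks are acyclic by the sheaf case. The main obstacle I anticipate is bookkeeping the compatibility of the isomorphism of Lemma \ref{lem 6} with the resolution $I^\bullet$ — i.e. checking that the identification $\Ho(G \boxtimes H, (\cdot)^S) \simeq \Ho(\rho'_*(G \boxtimes H), (\cdot)^{S,\sharp})$ is functorial in the first variable as a complex, so that it genuinely transports the acyclicity of $(I^j)^{S,\sharp}$ over $X_{sa}\times S_{sa}$ to acyclicity of $(I^j)^S$ over $(X\times S)_{sa}$; everything else is the routine dévissage recalled above.
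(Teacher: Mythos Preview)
Your overall strategy --- reduce to the $\T'$-flabbiness of $F^{S,\sharp}$ supplied by Lemma~\ref{lem 6 1/2} --- matches the paper's, but the step you flag as mere ``bookkeeping'' is where the actual content lies, and the non-derived identification you invoke cannot carry it.

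Knowing that $F^{S,\sharp}$ is $\Ho(\rho'_*(G\boxtimes H),\cdot)$-acyclic in $\mod(\CC_{X_{sa}\times S_{sa}})$ does \emph{not} formally imply that $F^S=\imin\eta F^{S,\sharp}$ is $\Ho(\rho_*(G\boxtimes H),\cdot)$-acyclic in $\mod(\CC_{(X\times S)_{sa}})$. The isomorphism of functors $\Ho(\rho_*(G\boxtimes H),\imin\eta(\cdot))\simeq \Ho(\rho'_*(G\boxtimes H),\cdot)$ (which, incidentally, comes from the full faithfulness of $\imin\eta$ rather than from Lemma~\ref{lem 6}) only says that the two \emph{composite} functors agree, hence have the same total derived functor; applied to an injective $F$ this just reproduces the degree-zero isomorphism and tells you nothing about $R^k\Ho(\rho_*(G\boxtimes H),\cdot)$ evaluated at the single object $\imin\eta F^{S,\sharp}$. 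For that you would need $\imin\eta$ to send $\T'$-flabby (or injective) objects to $\Ho(\rho_*(G\boxtimes H),\cdot)$-acyclic objects --- which is precisely the assertion to be proved.

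The paper closes this gap by establishing the \emph{derived} comparison
\[
\Rh(G\boxtimes H,F^S)\ \simeq\ \Rh(G\boxtimes H,F^{S,\sharp})
\]
directly: Proposition~\ref{P:26} (built on Lemma~\ref{L:28}, the derived compatibility $\imin\eta\rh(\rho'_*K,\cdot)\simeq\rh(\rho_*K,\imin\eta(\cdot))$ for $K\in D^b(\coh(\T'))$) gives $\rh(G\boxtimes H,F^S)\simeq\imin\eta\rh(G\boxtimes H,F^{S,\sharp})$, and Corollary~\ref{cor:2} then identifies $R\Gamma(X\times S;\imin\eta(\cdot))$ with $R\Gamma(X\times S;\cdot)$. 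An equivalent packaging is derived adjunction together with the derived full faithfulness $\id\iso R\eta_*\imin\eta$. Either way, this derived input is what your argument is missing; once it is in place, the $\T'$-flabbiness of $F^{S,\sharp}$ finishes the proof in one line, and your d\'evissage to complexes in $G,H$ is then fine. (Aside: since $F$ is assumed injective, introducing a resolution $F\to I^\bullet$ is superfluous.)
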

\begin{proof}
First remark that, $F$ being injective, we have $F^{RS,\sharp} \simeq F^{S,\sharp}$ and $F^{RS} \simeq F^S$.
By Proposition \ref{P:26} and Proposition \ref{L:2} we have
\begin{eqnarray*}
\Rh(G \boxtimes H,F^S) & \simeq & R\Gamma(X \times S;\rh(G \boxtimes H,F^S)) \\
& \simeq & R\Gamma(X \times S;\imin\eta\rh(G \boxtimes H,F^{S,\sharp})) \\
& \simeq & R\Gamma(X \times S;\rh(G \boxtimes H,F^{S,\sharp})) \\
& \simeq & \Rh(G \boxtimes H,F^{S,\sharp}).
\end{eqnarray*}
Lemma \ref{lem 6 1/2} implies that $F^{S,\sharp}$ is $\Ho(G \boxtimes H, \cdot)$-acyclic and the result follows.
\end{proof}

\begin{lem} \label{lem 7} Let $G \in D^b_{\rc}(\CC_X)$, $H \in D^b_{\rc}(\CC_S)$. Let $F \in D^b(\CC_{(X \times S)_{sa}})$. Then
$$
\Rh(G \boxtimes H,F^{RS}) \simeq \Rh(G \boxtimes \rho_!H,F)\simeq \Rh(\CC_X \boxtimes H,\imin\rho\rh(G \boxtimes \CC_S,F)) .
$$
\end{lem}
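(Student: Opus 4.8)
The right-hand isomorphism is formal: it follows by adjunction, exactly as the corresponding isomorphism in the non-derived Lemma \ref{lem 6}, since $\imin\rho$ and $\rho_!$ are exact so that the adjunction relating them, together with the tensor--hom adjunction, passes to the derived level unchanged. Thus the whole content lies in the left-hand isomorphism $\Rh(G\boxtimes H,F^{RS})\simeq\Rh(G\boxtimes\rho_!H,F)$, which I would establish by deriving the left-hand isomorphism of Lemma \ref{lem 6} degreewise.

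First I would fix a bounded-below injective resolution $F\to I^{\bullet}$ in $\mod(\CC_{(X\times S)_{sa}})$; since $(\cdot)^{S}$ is left exact one has $F^{RS}\simeq(I^{\bullet})^{S}$, and $I^{RS}\simeq I^{S}$ for each injective $I$. Then I would represent $G$ and $H$ by bounded complexes of $\R$-constructible sheaves on $X$ and on $S$ respectively, so that $G\boxtimes H$ — and, $\rho_!$ being exact, also $G\boxtimes\rho_!H$ — is the total complex of a bounded double complex whose $(i,j)$ term is $G^{i}\boxtimes H^{j}$, resp. $G^{i}\boxtimes\rho_!H^{j}$.

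The key input is Corollary \ref{cor 6 1/2}: it shows that each $(I^{k})^{S}$ is $\Ho(G^{i}\boxtimes H^{j},\cdot)$-acyclic for all $i,j$, hence, taking finite direct sums, $\Ho(P,\cdot)$-acyclic for every component $P$ of the complex $G\boxtimes H$. A d\'evissage on the bounded complex $G\boxtimes H$ (along its b\^ete filtration) then gives
$$
\Rh(G\boxtimes H,F^{RS})\simeq\Rh(G\boxtimes H,(I^{\bullet})^{S})\simeq\Ho^{\bullet}(G\boxtimes H,(I^{\bullet})^{S}),
$$
the last term being the naive total Hom complex. Applying the natural isomorphism of Lemma \ref{lem 6} in each bidegree, namely $\Ho(G^{i}\boxtimes H^{j},(I^{k})^{S})\simeq\Ho(G^{i}\boxtimes\rho_!H^{j},I^{k})$, and assembling, we obtain $\Ho^{\bullet}(G\boxtimes H,(I^{\bullet})^{S})\simeq\Ho^{\bullet}(G\boxtimes\rho_!H,I^{\bullet})$; since $I^{\bullet}$ is an injective resolution of $F$, the right-hand side computes $\Rh(G\boxtimes\rho_!H,F)$, which completes the argument.

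The main obstacle is this middle d\'evissage. Corollary \ref{cor 6 1/2} and Lemma \ref{lem 6} are stated only for a single injective $F$ and single $\R$-constructible sheaves, so one must pass carefully to bounded representatives of $G$, $H$ and $F$ and check that a complex of $\Ho(G\boxtimes H,\cdot)$-acyclic objects really does compute $\Rh(G\boxtimes H,\cdot)$. A variant avoiding the injective resolution of $F$ would route through $F^{RS,\sharp}$: by Proposition \ref{P:26} together with $R\Gamma(X\times S;\imin\eta(\cdot))\simeq R\Gamma(X\times S;\cdot)$ (Corollary \ref{cor:2}) one first obtains $\Rh(G\boxtimes H,F^{RS})\simeq\Rh(G\boxtimes H,F^{RS,\sharp})$, and then concludes from the $\Ho(G\boxtimes H,\cdot)$-acyclicity of $\T'$-flabby sheaves (Lemma \ref{lem 6 1/2}) — but this still requires the same $\R$-constructible d\'evissage.
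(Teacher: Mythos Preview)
Your proposal is correct and follows essentially the same route as the paper: the right-hand isomorphism is dispatched by adjunction, and the left-hand one is obtained by combining Corollary \ref{cor 6 1/2} (injectives go to $\Ho(G\boxtimes H,\cdot)$-acyclics under $(\cdot)^S$) with Lemma \ref{lem 6}, after reducing to $F$ injective and $G,H$ concentrated in degree zero. The paper compresses your d\'evissage into a single sentence (``we may reduce to $F$ injective and $G,H$ concentrated in degree $0$''), but the underlying argument is the one you have spelled out.
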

\begin{proof} The right hand isomorphism follows by adjunction.
 Let us prove the left hand isomorphism.




By Corollary \ref{cor 6 1/2} we see that that $(\cdot)^S$ sends injective objects of $\mod(\CC_{(X \times S)_{sa}})$ to $\Ho(G \boxtimes H,\cdot)$-acyclic objects, for $G \in \mod_{\rc}(\CC_X)$, $H \in \mod_{\rc}(\CC_S)$.

Therefore, we may reduce to $F$ injective and $G,H$ concentrated in degree 0. Then the result follows from Lemma \ref{lem 6}. \end{proof}

Remark that if $K \in \mod(\CC_{(X \times S)_{sa}})$ then $\imin\rho K \osi \imin\rho \imin\eta \eta_*K$. Indeed, for each $y \in X \times S$,
$$
(\imin\rho K)_y\simeq \lind {U \times V \ni y}K(U \times V) \simeq (\imin\rho \imin\eta \eta_*K)_y
$$
with $U \in \op(X_{sa})$, $V \in \op(S_{sa})$.

\begin{prop} \label{lem 9} Let $F \in D^b(\CC_{(X \times S)_{sa}})$. Let $G \in D^b_{\rc}(\CC_X)$ and $H \in D^b_{\rc}(\CC_S)$. Then
\begin{eqnarray*}
\imin \rho \rh(G \boxtimes H,F^{RS}) & \simeq  & \imin \rho \rh(G \boxtimes \rho_!H,F) \\
& \simeq & \rh(\CC_X \boxtimes H,\imin\rho\rh(G \boxtimes \CC_S,F)).
\end{eqnarray*}
In particular, when $G=\CC_X$ and $H=\CC_S$ we have $\imin\rho F \simeq \imin\rho F^{RS}\simeq \rho'^{-1} F^{RS,\sharp}$.
\end{prop}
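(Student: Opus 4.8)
The plan is to derive the first two isomorphisms directly from the machinery already set up, sheafifying the statement of Lemma \ref{lem 7}. Recall that $\Rh(G\boxtimes H,F^{RS})\simeq R\Gamma(X\times S;\rh(G\boxtimes H,F^{RS}))$, and similarly on the right-hand side; Lemma \ref{lem 7} gives the equality of these global sections. To upgrade to an isomorphism of subanalytic sheaves after applying $\imin\rho$, the natural move is to compute stalks: for $y\in X\times S$, $(\imin\rho\rh(G\boxtimes H,F^{RS}))_y\simeq\lind{U\times V\ni y}R\Gamma(U\times V;\rh(G\boxtimes H,F^{RS}))$, running over $U\in\op(X_{sa})$, $V\in\op(S_{sa})$, since such products form a basis of neighbourhoods of $y$. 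On each $U\times V$ one has $R\Gamma(U\times V;\rh(G\boxtimes H,F^{RS}))\simeq\Rh((G\boxtimes H)_{U\times V},F^{RS})\simeq\Rh(G_U\boxtimes H_V,F^{RS})$, and now Lemma \ref{lem 7} applied with $G_U,H_V$ (which are still $\R$-constructible) identifies this with $\Rh(G_U\boxtimes\rho_!H_V,F)$. Passing to the inductive limit over $U\times V\ni y$ yields the first isomorphism; the second is then immediate by adjunction between $\imin\rho$ and $\rho_!$, exactly as in the last sentence of Lemma \ref{lem 7}.

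For the final assertion, specialize to $G=\CC_X$, $H=\CC_S$. The first isomorphism gives $\imin\rho\rh(\CC_X\boxtimes\CC_S,F^{RS})\simeq\imin\rho\rh(\CC_X\boxtimes\rho_!\CC_S,F)$, i.e. $\imin\rho F^{RS}\simeq\imin\rho\rh(\CC_{X\times S},F)\simeq\imin\rho F$, using $\rho_!\CC_S\simeq\CC_{S_{sa}}$ and that $\rh(\CC,-)$ is the identity. For the identification with $\rho'^{-1}F^{RS,\sharp}$, recall $\rho'=\eta\circ\rho$, so $\rho'^{-1}F^{RS,\sharp}\simeq\imin\rho\imin\eta F^{RS,\sharp}\simeq\imin\rho F^{RS}$ by the definition $F^{RS}\simeq\imin\eta F^{RS,\sharp}$ in \eqref{E:10} (in its derived form). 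This closes the circle.

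The main obstacle I expect is the stalk computation: one must make sure that Lemma \ref{lem 7}, which is stated for $\R$-constructible $G,H$ on all of $X$ and $S$, applies cleanly to the restricted sheaves $G_U$ and $H_V$ for $U,V$ merely subanalytic open (not relatively compact), and that the resulting isomorphisms are compatible with the restriction maps defining the inductive limit over $U\times V\ni y$. The naturality is built into the proof of Lemma \ref{lem 7} (it is a chain of canonical isomorphisms), so this should go through, but it is the point requiring care. A secondary subtlety is the remark preceding the proposition, $\imin\rho K\osi\imin\rho\imin\eta\eta_*K$, which guarantees that working with $F^{RS,\sharp}$ versus $F^{RS}$ makes no difference at the level of $\imin\rho$; invoking it for the last assertion keeps the argument consistent even though $\imin\eta R\eta_*\to\id$ fails in general (Remark \ref{oss:T}).
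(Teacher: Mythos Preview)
Your strategy—localise Lemma \ref{lem 7} by taking stalks over a basis of products $U\times V$—is exactly the spine of the paper's argument, and by invoking Lemma \ref{lem 7} directly in the derived setting you bypass the paper's steps 2) and 3), which reduce to degree zero via an injective resolution and the acyclicity statement of Corollary \ref{cor 6 1/2}. That shortcut is legitimate.

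There is, however, a real gap in your stalk identification. Applying Lemma \ref{lem 7} to $G_U$ and $H_V$ yields
\[
\Rh(G_U\boxtimes H_V,F^{RS})\ \simeq\ \Rh\bigl(G_U\boxtimes \rho_!(H_V),F\bigr),
\]
so the inductive limit over $U\times V\ni y$ of the right-hand side is $\lind{U\times V\ni y}\Rh(G_U\boxtimes\rho_!(H_V),F)$. But the stalk of $\imin\rho\rh(G\boxtimes\rho_!H,F)$ at $y$ is
\[
\lind{U\times V\ni y}\Rh\bigl(G_U\boxtimes(\rho_!H)_V,F\bigr),
\]
and $\rho_!(H_V)$ is \emph{not} the same as $(\rho_!H)_V$. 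One must argue, as the paper does in its step 1), that the natural morphism $\rho_!(H_V)\to(\rho_!H)_V$ and the identification $\rho_!(H_V)\simeq\lind{W\subset\subset V}(\rho_!H)_W$ make the two inductive systems cofinal as $V$ shrinks to the second coordinate of $y$, so that the two limits coincide. This same morphism is what produces an actual map of sheaves (not merely a bijection on stalks); your remark that ``naturality is built into Lemma \ref{lem 7}'' does not by itself furnish a morphism $\imin\rho\rh(G\boxtimes\rho_!H,F)\to\imin\rho\rh(G\boxtimes H,F^{RS})$ without first passing through $\rho_!(H_V)\to(\rho_!H)_V$. Once you insert this step your argument is complete and indeed somewhat shorter than the paper's.
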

\begin{proof} The second isomorphism follows by adjunction. Let us prove the first one.

1) Let us first suppose that $F,G,H$ are concentrated in degree zero. Hence,  by the remark above, to  any morphism $$ \eta_* \ho(G \boxtimes \rho_!H,F) \to \eta_* \ho(G \boxtimes H,F^S)$$ one associates a morphism $$\imin\rho \ho(G \boxtimes \rho_!H,F) \to \imin\rho \ho(G \boxtimes H,F^S).$$

Note that the natural morphism of functors $\rho_!(H_V) \to (\rho_!H)_V$ induces a morphism $\Ho(G_U \boxtimes (\rho_!H)_V,F) \to \Ho(G_U \boxtimes \rho_!(H_V),F)$  hence a morphism $\psi: \eta_* \ho(G \boxtimes \rho_!H,F) \to \eta_* \ho(G \boxtimes H,F^S)$, which defines a morphism  $\imin\rho \ho(G \boxtimes \rho_!H,F) \to \imin\rho \ho(G \boxtimes H,F^S)$.

 Let us check on the  fibers that it is an isomophism.
 Let $y \in X \times S$, then
\begin{eqnarray*}
(\imin\rho \ho(G \boxtimes \rho_!H,F))_y & \simeq & \lind {U \times V \ni y}\Ho(G_U \boxtimes (\rho_!H)_V,F) \\
& \simeq & \lind {U \times V \ni y}\lpro {W \subset\subset V}\Ho(G_U \boxtimes (\rho_!H)_W,F) \\
& \simeq & \lind {U \times V \ni y}\Ho(G_U \boxtimes \rho_!(H_V),F) \\
& \simeq & (\imin\rho \ho(G \boxtimes H,F^S))_y
\end{eqnarray*}
with $U \in \op(X_{sa})$, $V,W \in \op(S_{sa})$.

2) Suppose now that $F$ is injective and that $G,H$ are concentrated in degree 0. Let $U \in \op(X_{sa})$, $V \in \op(S_{sa})$. The complex
\begin{eqnarray*}
R\Gamma(U \times V;\rh(G \boxtimes H,F^S)) & \simeq & \Rh(G_U \boxtimes H_V,F^S) \end{eqnarray*}
is concentrated in degree 0 by Corollary \ref{cor 6 1/2}. Then $F^S$ is $\rh(G \boxtimes H,\cdot)$-acyclic.

3) Let $G \in D^b_{\rc}(\CC_X)$ and $H \in D^b_{\rc}(\CC_S)$. Let $F \in D^b(\CC_{(X \times S)_{sa}})$ and let $I^\bullet$ be a complex of injective objects quasi-isomorphic to $F$. Then
\begin{eqnarray*}
\imin\rho \rh(G \boxtimes \rho_!H,F) & \simeq & \imin\rho \ho(G \boxtimes \rho_!H,I^\bullet) \\
& \simeq & \imin \rho \ho(G \boxtimes H,(I^\bullet)^S) \\
& \simeq & \imin \rho \rh(G \boxtimes H,F^{RS}),
\end{eqnarray*}
where the second isomorphism follows from 1) and the third one from 2). \end{proof}


We end this section with the following result on the acyclicity for the functor $(\cdot)^S$ which will be needed in the sequel:
\begin{prop} \label{lem 8} Suppose that $F \in \mod(\rho_!\C^\infty_{X \times S})$ is $\Gamma(W;\cdot)$-acyclic for each $W \in \op((X \times S)_{sa})$. Then for each $U \in \op(X_{sa})$, $V \in \op(S_{sa})$ we have $R^k\Gamma(U \times V;F^{RS,\sharp}) = R^k\Gamma(U \times V;F^{RS}) = 0$ if $k \neq 0$.
\end{prop}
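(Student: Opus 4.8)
The plan is to compute $R\Gamma(U\times V;F^{RS,\sharp})$ from an injective resolution of $F$, to rewrite it as an inverse limit over the relatively compact $W\subset\subset V$ of the complexes $\Gamma(U\times W;I^\bullet)$, and then to control that limit by a Mittag--Leffler argument, which is exactly the place where the hypothesis $F\in\mod(\rho_!\C^\infty_{X\times S})$ is needed.

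First I would reduce to $F^{RS,\sharp}$: since $U\times V\in\op(X_{sa}\times S_{sa})$, Corollary \ref{cor:2} gives $R\Gamma(U\times V;F^{RS})\simeq R\Gamma(U\times V;F^{RS,\sharp})$. Choose an injective resolution $F\to I^\bullet$ in $\mod(\CC_{(X\times S)_{sa}})$, so that $F^{RS,\sharp}\simeq(I^\bullet)^{S,\sharp}$. By Lemma \ref{lem 6 1/2} each $(I^j)^{S,\sharp}$ is $\T'$-flabby, hence $\Gamma(W;\cdot)$-acyclic for every $W\in\T'$; writing an arbitrary $U\times V$ as an increasing union of relatively compact $\T'$-opens and using that $\T'$-flabby sheaves have surjective restriction maps between nested relatively compact $\T'$-opens (so the relevant $\varprojlim^1$ vanishes), one gets $R\Gamma(U\times V;(I^j)^{S,\sharp})$ concentrated in degree $0$ and hence $R\Gamma(U\times V;F^{RS,\sharp})\simeq\Gamma(U\times V;(I^\bullet)^{S,\sharp})$. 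Finally, by the very definition of $(\cdot)^{S,\sharp}$ and the computations in the proof of Lemma \ref{lem 5}, $\Gamma(U\times V;(I^j)^{S,\sharp})\simeq\Gamma(X\times V;\imin\rho\Gamma_{U\times S}I^j)\simeq\lpro {W\subset\subset V}\Gamma(U\times W;I^j)$, the limit running over $W\in\op^c(S_{sa})$ (with, for $V$ not relatively compact, a further inverse limit over an exhausting family $V_n$ of $V$, which one folds into the $W$'s).

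Now comes the use of the two hypotheses. Since $U\times W\in\op((X\times S)_{sa})$, the assumption that $F$ is $\Gamma(W';\cdot)$-acyclic for every $W'$ says exactly that, for each relatively compact $W\subset\subset V$, the complex $\Gamma(U\times W;I^\bullet)$ is a resolution of $\Gamma(U\times W;F)$, i.e.\ has cohomology concentrated in degree $0$. It remains to show $H^k\bigl(\lpro {W\subset\subset V}\Gamma(U\times W;I^\bullet)\bigr)=0$ for $k\neq0$, and here the hypothesis $F\in\mod(\rho_!\C^\infty_{X\times S})$ enters: since $F$ is a module over the soft subanalytic sheaf $\rho_!\C^\infty_{X\times S}$ (partitions of unity, Whitney extension), the restriction maps $\Gamma(U\times W';F)\to\Gamma(U\times W;F)$ are surjective along a cofinal family of pairs $W\subset\subset W'$, and the injectives $I^j$ enjoy the same surjectivity (being moreover $\T$-flabby). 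Therefore the projective systems $\{\Gamma(U\times W;I^j)\}_W$ have surjective transition maps cofinally, so applying $\lpro {W}$ to the resolution $\Gamma(U\times W;I^\bullet)$ degreewise keeps it a resolution; thus $\lpro {W\subset\subset V}\Gamma(U\times W;I^\bullet)$ has cohomology concentrated in degree $0$, equal to $\lpro {W\subset\subset V}\Gamma(U\times W;F)\simeq\Gamma(U\times V;F^{S,\sharp})$. This gives $R^k\Gamma(U\times V;F^{RS,\sharp})=R^k\Gamma(U\times V;F^{RS})=0$ for $k\neq0$, as claimed.

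The main obstacle is precisely the control of these derived inverse limits — checking that the $\varprojlim^1$-terms vanish, over the $W\subset\subset V$ and, in the case where $U$ or $V$ is not relatively compact, also over the exhausting families $U_n\subset\subset U_{n+1}$, $V_n\subset\subset V_{n+1}$. Acyclicity alone is not enough, since a general $\Gamma(W;\cdot)$-acyclic subanalytic sheaf need not have surjective restriction maps between nested opens; it is the $\rho_!\C^\infty$-module structure that forces the Mittag--Leffler condition and thereby ensures that the inverse limit of resolutions is again a resolution. Everything else is bookkeeping with the identifications already established in Lemmas \ref{lem 5}, \ref{lem 6 1/2} and \ref{lem 7}, Proposition \ref{P:26} and Corollary \ref{cor:2}.
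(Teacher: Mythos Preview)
Your route via explicit injective resolutions and a Mittag--Leffler argument is genuinely different from the paper's proof, which is much shorter. The paper simply invokes Lemma~\ref{lem 7} (with $G=\CC_U$, $H=\CC_V$) to obtain
\[
R\Gamma(U\times V;F^{RS,\sharp})\ \simeq\ R\Gamma(U\times V;F^{RS})\ \simeq\ R\Gamma\bigl(X\times V;\imin\rho R\Gamma_{U\times S}F\bigr),
\]
and then argues on the right-hand side: the acyclicity hypothesis forces $R\Gamma_{U\times S}F$ to be concentrated in degree~$0$, and $\imin\rho\Gamma_{U\times S}F$ is a $\C^\infty_{X\times S}$-module (since $\imin\rho$ applied to a $\rho_!\C^\infty_{X\times S}$-module is a $\C^\infty_{X\times S}$-module), hence c-soft, hence $\Gamma(X\times V;\cdot)$-acyclic. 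No inverse limits appear.

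Your argument, by contrast, has a real gap precisely at the point you flag as ``the main obstacle''. The surjectivity of the transition maps $\Gamma(U\times W';I^j)\to\Gamma(U\times W;I^j)$ follows from injectivity of $I^j$ and does dispose of $H^k$ for $k\geq 2$. But the vanishing of $H^1$ needs $\varprojlim^1_{W\subset\subset V}\Gamma(U\times W;F)=0$, and your justification for this --- that the $\rho_!\C^\infty_{X\times S}$-module structure gives surjective restrictions $\Gamma(U\times W';F)\to\Gamma(U\times W;F)$ via partitions of unity and Whitney extension --- is not established. These are sections on the \emph{subanalytic} site, not the ordinary one; softness arguments live on $X\times S$, not on $(X\times S)_{sa}$, and there is no mechanism provided to transport them. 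A kernel of two Mittag--Leffler systems need not be Mittag--Leffler, so the surjectivity for the $I^j$ does not by itself control $\varprojlim^1$ of $\Gamma(U\times W;F)$. The paper's insight is exactly to pass to the ordinary site via $\imin\rho$ \emph{before} invoking the $\C^\infty$-module structure, so that c-softness can be used directly; any attempt to justify your Mittag--Leffler claim seems to lead back to that same move.
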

\begin{proof} By Lemma \ref{lem 7}  we have $R \Gamma(U \times V; F^{RS,\sharp}) \simeq R\Gamma(U \times V;F^{RS}) \simeq R\Gamma(X \times V; \imin\rho R\Gamma_{U \times S}F)$. Since $F$ is $\Gamma(W;\cdot)$-acyclic for each $W \in \op((X \times S)_{sa})$, the complex $R\Gamma_{U \times S}F$ is concentrated in degree zero. Since $F$ is a $\rho_!\C^\infty_{X \times S}$-module,  $\imin\rho\Gamma_{U \times S}F$ is a $\C^\infty_{X \times S}$-module, hence c-soft and $\Gamma(X \times V;\cdot)$-acyclic. This shows the result. \end{proof}

\begin{cor} \label{cor 8} Suppose that $F \in \mod(\rho_!\C^\infty_{X \times S})$ is $\Gamma(W;\cdot)$-acyclic for each $W \in \op((X \times S)_{sa})$. Then 
$F$ is $(\cdot)^{S,\sharp}$-acyclic and $(\cdot)^S$-acyclic.
\end{cor}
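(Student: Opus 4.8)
The statement follows quickly from Proposition \ref{lem 8}, so the plan is essentially bookkeeping. First I would establish that $F$ is $(\cdot)^{S,\sharp}$-acyclic. Since $(\cdot)^{S,\sharp}$ is left exact, one has $H^0(F^{RS,\sharp})\simeq F^{S,\sharp}$ and $H^k(F^{RS,\sharp})=0$ for $k<0$, so only $H^k(F^{RS,\sharp})=0$ for $k>0$ needs to be checked. Fix such a $k$. Computing $F^{RS,\sharp}$ with a complex of injective sheaves on $X_{sa}\times S_{sa}$ and using that the sheaf image of a differential is the sheafification of the presheaf image, one sees that $H^k(F^{RS,\sharp})$ is the sheaf associated to the presheaf $W\mapsto R^k\Gamma(W;F^{RS,\sharp})$. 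By Proposition \ref{lem 8} this presheaf vanishes on every $W=U\times V$ with $U\in\op^{c}(X_{sa})$ and $V\in\op^{c}(S_{sa})$; such $W$ form a generating family for the Grothendieck topology of $X_{sa}\times S_{sa}=Y_{\T'}$ (every element of $\T'$ is a finite union of them, and every object of the site a union of them). As a presheaf vanishing on a generating family has zero sheafification, $H^k(F^{RS,\sharp})=0$; hence $F^{RS,\sharp}\simeq F^{S,\sharp}$, which is precisely the asserted $(\cdot)^{S,\sharp}$-acyclicity.

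The $(\cdot)^S$-acyclicity is then immediate: by construction $(\cdot)^{RS}\simeq\imin\eta\circ(\cdot)^{RS,\sharp}$ and the inverse image functor $\imin\eta$ is exact, so $F^{RS}\simeq\imin\eta F^{RS,\sharp}\simeq\imin\eta F^{S,\sharp}=F^{S}$.

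I do not expect a genuine obstacle here: all the analytic content — the concentration of $R\Gamma_{U\times S}F$ in degree zero and the $c$-softness of $\imin\rho\Gamma_{U\times S}F$ — has already been absorbed into Proposition \ref{lem 8}, and what remains are the two purely formal inputs used above, namely that the rectangles $U\times V$ (with $U\in\op^{c}(X_{sa})$, $V\in\op^{c}(S_{sa})$) generate the topology of $X_{sa}\times S_{sa}$, and that the $k$-th cohomology sheaf of a complex is the sheafification of $W\mapsto R^k\Gamma(W;\cdot)$. Should one prefer to avoid invoking the latter, one can argue degreewise from the triangle $\dt{F^{S,\sharp}}{F^{RS,\sharp}}{\tau^{\ge1}F^{RS,\sharp}}$, after noting — again via Proposition \ref{lem 8} and Lemma \ref{lem 5} — that $F^{S,\sharp}$ is itself $\Gamma(U\times V;\cdot)$-acyclic, so that $R\Gamma(U\times V;\tau^{\ge1}F^{RS,\sharp})=0$ for all such $U\times V$, and then $\tau^{\ge1}F^{RS,\sharp}=0$ upon inspecting its lowest nonzero cohomology sheaf.
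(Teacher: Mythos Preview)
Your argument is correct and follows essentially the same route as the paper's proof: reduce $(\cdot)^{S,\sharp}$-acyclicity to the vanishing of $H^k(F^{RS,\sharp})$ for $k>0$, observe that this cohomology sheaf is the sheafification of $W\mapsto R^k\Gamma(W;F^{RS,\sharp})$, invoke Proposition~\ref{lem 8} on the basis of rectangles $U\times V$, and then deduce $(\cdot)^S$-acyclicity from $(\cdot)^{RS}\simeq\imin\eta\circ(\cdot)^{RS,\sharp}$ and the exactness of $\imin\eta$. The paper's proof is the same argument stated more tersely; your version simply spells out the sheafification step and adds an optional alternative via the truncation triangle.
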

\begin{proof} Being $(\cdot)^{RS} \simeq \imin\eta \circ (\cdot)^{RS,\sharp}$, it is enough to show that $H^kF^{RS,\sharp}=0$ if $k \neq 0$. It is enough to prove that $R^k\Gamma(W;F^{RS,\sharp})=0$ if $k \neq 0$ on a basis for the topology of $(X \times S)_{\T'}$. Since the products $U \times V$ with $U \in \op(X_{sa})$, $V \in \op(S_{sa})$ form a basis, the result follows from Proposition \ref{lem 8}.
\end{proof}

\section{The  sheaves  $\C_{X\times S}^{\infty, t, S}$, $\db_{X\times S}^{t,S}$, $\C_{X\times S}^{\infty,\mathrm{w},S}$, $\OO_{X\times S}^{t,S}$ and $\OO_{X\times S}^{\mathrm{w},S}$}\label{S:3}
Let $X$  and $S$ be real analytic manifolds.
The construction given by (\ref{E:10}) allows us to introduce the following sheaves:
\begin{enumerate}
\item{$\C^{\infty,t,S}_{X\times S}:=(\C^{\infty,t}_{X\times S})^S$ as the relative sheaf associated to $\C^{\infty,t}_{X\times S}$,}
\item{$\db_{X\times S}^{t,S}:=(\dbt_{X \times S})^S$ as the relative sheaf associated  to $\db^{t}_{X\times S}$,}
\item{$\C_{X\times S}^{\infty,\mathrm{w},S}:=(\CW_{X \times S})^S$ as the relative sheaf associated to $\C_{X\times S}^{\infty,\mathrm{w}}$.}

\end{enumerate}

We then derive from Lemma \ref{lem 5}:
\begin{prop} \label{prop 6}Let $U \in \op(X_{sa})$, $V \in \op(S_{sa})$. Then
\begin{enumerate}
\item{$\Gamma(U \times V;\C^{\infty, t,S}_{X\times S})\simeq\Gamma(X \times V;\imin\rho\Gamma_{U \times S}\C^{\infty,t}_{X \times S}) \\ \ \ \ \ \simeq \Gamma(X \times V;\tho(\CC_{U \times S},\C^\infty_{X \times S})),$} \\
\item{$\Gamma(U \times V;\db^{t,S}_{X \times S})\simeq\Gamma(X \times V;\imin\rho\Gamma_{U \times S}\dbt_{X \times S}) \\ \ \ \ \ \simeq \Gamma(X \times V;\tho(\CC_{U \times S},\db_{X \times S})),$} \\
\item{$\Gamma(U \times V;\C^{\infty,\mathrm{w},S}_{X \times S})\simeq\Gamma(X \times V;\imin\rho\Gamma_{U \times S}\C^{\infty,\mathrm{w}}_{X \times S}) \\ \ \ \ \ \simeq \Gamma(X \times V;H^0D'\CC_U \boxtimes \CC_S \wtens \C^\infty_{X \times S}).$}
\end{enumerate}
\end{prop}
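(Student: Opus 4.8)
The plan is to obtain the three formulas as an essentially immediate consequence of Lemma \ref{lem 5}, once the subanalytic sheaves $\C^{\infty,t}_{X\times S}$, $\dbt_{X\times S}$ and $\CW_{X\times S}$ are recalled.

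First I would apply Lemma \ref{lem 5} with $F$ equal successively to each of these three sheaves. Since, by the definitions of the previous section, $\C^{\infty,t,S}_{X\times S}=(\C^{\infty,t}_{X\times S})^S$, $\db^{t,S}_{X\times S}=(\dbt_{X\times S})^S$ and $\C^{\infty,\mathrm{w},S}_{X\times S}=(\CW_{X\times S})^S$, Lemma \ref{lem 5} yields at once the first isomorphism in each of (1)--(3), namely $\Gamma(U\times V;\C^{\infty,t,S}_{X\times S})\simeq\Gamma(X\times V;\imin\rho\Gamma_{U\times S}\C^{\infty,t}_{X\times S})$ and its two analogues (Lemma \ref{lem 5} also provides the $\Ho(\CC_U\boxtimes\rho_!\CC_V,-)$ form of each term, which is not needed here).

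For the remaining isomorphism in each item I would invoke the characterizing identities of these sheaves recalled from \cite{KS3}: for a real analytic manifold $M$ and a subanalytic open subset $Z\subset M$ one has $\imin\rho\Gamma_Z\dbt_M\simeq\tho(\CC_Z,\db_M)$, $\imin\rho\Gamma_Z\C^{\infty,t}_M\simeq\tho(\CC_Z,\C^\infty_M)$, and an analogous identity for $\CW_M$ in terms of the Whitney tensor functor, $\imin\rho\Gamma_Z\CW_M\simeq H^0(D'\CC_Z\wtens\C^\infty_M)$. Taking $M=X\times S$ and $Z=U\times S$, which is subanalytic and open in $X\times S$, and passing to sections over $X\times V$ gives the right-hand sides of (1) and (2); for (3) one moreover uses $D'\CC_{U\times S}\simeq D'\CC_U\boxtimes D'\CC_S\simeq(D'\CC_U)\boxtimes\CC_S$, since $D'\CC_S\simeq\CC_S$, together with the exactness of $(\,\cdot\,)\wtens\C^\infty_{X\times S}$ on $\R$-constructible sheaves, which allows one to bring $H^0$ inside and recover $H^0(D'\CC_U)\boxtimes\CC_S\wtens\C^\infty_{X\times S}$.

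The only point that is not pure bookkeeping is that $\Gamma_{U\times S}$ enters here as a non-derived functor, whereas $\tho$ and the Whitney functor are defined through right derived functors: one has to check that no higher cohomology intervenes. This is ensured by the acyclicity and concentration properties recorded in \cite{KS3} — concretely, by the fact that for $Z$ open the objects $\tho(\CC_Z,\db_M)$, $\tho(\CC_Z,\C^\infty_M)$ and $(D'\CC_U\boxtimes\CC_S)\wtens\C^\infty_{X\times S}$ are concentrated in the range of degrees in which the formulas above are asserted. With these acyclicities at hand the chain of isomorphisms closes, and this routine verification — rather than any genuinely new difficulty — is the (mild) obstacle one should expect.
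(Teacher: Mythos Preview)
Your proposal is correct and matches the paper's approach exactly: the paper simply writes ``We then derive from Lemma \ref{lem 5}'' and gives no further argument. You have in fact supplied more detail than the paper does, correctly identifying the second isomorphism in each item via the defining properties of $\dbt$, $\C^{\infty,t}$ and $\CW$ from \cite{KS3} and correctly flagging (and resolving) the derived versus non-derived issue.
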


We can now state:

\begin{prop} \label{prop 8}

i) Suppose that $\mathcal{F} =\db^t_{X \times S},\C^{\infty,t}_{X \times S}, \C^{\infty,\mathrm{w}}_{X \times S}$. Then $\mathcal{F}$ is $(\cdot)^{\sharp,S}$-acyclic and hence $(\cdot)^S$-acyclic. Moreover $\db^t_{X \times S},\C^{\infty,t}_{X \times S}$ are $\Gamma(U \times V;\cdot)$-acyclic for each $U \in \op(X_{sa})$, $V \in \op(S_{sa})$.

ii)  $\C^{\infty,\mathrm{w},S}_{X \times S}$ is $\Gamma(U \times V;\cdot)$-acyclic for each $U \in \op(X_{sa})$ locally cohomologically trivial and $V \in \op(S_{sa})$.
\end{prop}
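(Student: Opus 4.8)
The statement collects two acyclicity assertions about the relative sheaves built from the classical sheaves $\db^t_{X\times S}$, $\C^{\infty,t}_{X\times S}$ and $\C^{\infty,\mathrm{w}}_{X\times S}$; both parts should follow by combining the general machinery of Section~3 (especially Corollary~\ref{cor 8} and Lemma~\ref{lem 7}) with the classical acyclicity properties of these sheaves established in \cite{KS3}. First I would treat (i). The key point is that each of $\db^t_{X\times S}$, $\C^{\infty,t}_{X\times S}$, $\C^{\infty,\mathrm{w}}_{X\times S}$ is a module over $\rho_!\C^\infty_{X\times S}$, and each is $\Gamma(W;\cdot)$-acyclic for every $W\in\op((X\times S)_{sa})$: this is exactly the content of the classical results (the acyclicity for subanalytic open subsets is proved in \cite{KS3}, using that $\Gamma(W;\db^t)$ and $\Gamma(W;\C^{\infty,t})$ compute $\tho(\CC_W,\cdot)$, which is concentrated in degree zero on open subsets, while for the Whitney sheaf one uses the dual statement for $\wtens$). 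Hence the hypothesis of Corollary~\ref{cor 8} is satisfied, and that corollary immediately gives that $\mathcal{F}$ is $(\cdot)^{S,\sharp}$-acyclic and $(\cdot)^S$-acyclic.

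For the second assertion of (i) — that $\db^t_{X\times S}$ and $\C^{\infty,t}_{X\times S}$ are $\Gamma(U\times V;\cdot)$-acyclic for $U\in\op(X_{sa})$, $V\in\op(S_{sa})$ — I would invoke Proposition~\ref{lem 8} (or rather its proof): since $\mathcal{F}$ is $\Gamma(W;\cdot)$-acyclic for all subanalytic $W$, the complex $R\Gamma_{U\times S}\mathcal{F}$ is concentrated in degree zero, and $\imin\rho\Gamma_{U\times S}\mathcal{F}$ is a genuine $\C^\infty_{X\times S}$-module on the ordinary topological space $X\times S$, hence $c$-soft, hence $\Gamma(X\times V;\cdot)$-acyclic. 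By Lemma~\ref{lem 7} we have $R\Gamma(U\times V;\mathcal{F}^{RS})\simeq R\Gamma(X\times V;\imin\rho R\Gamma_{U\times S}\mathcal{F})$, which is therefore concentrated in degree zero; combined with the $(\cdot)^S$-acyclicity just proved, $\Gamma(U\times V;\mathcal{F}^S)$ is concentrated in degree zero, which is the claim.

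For (ii) the argument is the same in structure but the Whitney sheaf behaves differently: $\Gamma(W;\C^{\infty,\mathrm{w}}_{X\times S})$ is computed by $H^0D'\CC_W\wtens\C^\infty_{X\times S}$ (Proposition~\ref{prop 6}(3)), and the relevant vanishing $R^k\Gamma_{U\times S}$ or the concentration in degree zero of $D'\CC_U\boxtimes\CC_S\wtens\C^\infty$ requires that $D'\CC_U\simeq\CC_{\overline U}$, i.e. that $U$ be \emph{locally cohomologically trivial} in the sense used for Whitney functions. Granting that, $\imin\rho(\text{the relevant object})$ is again a $\C^\infty$-module on $X\times S$, hence $c$-soft, and the same chain via Lemma~\ref{lem 7} gives $\Gamma(U\times V;\C^{\infty,\mathrm{w},S}_{X\times S})$ concentrated in degree zero.

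**Main obstacle.** The only genuinely delicate point is bookkeeping about which classical acyclicity statements over subanalytic open subsets are available off the shelf from \cite{KS3} and in what form — in particular matching the $\rho_!\C^\infty$-module structure and the $c$-softness argument to the precise hypotheses of Corollary~\ref{cor 8} and Proposition~\ref{lem 8} — and, in (ii), pinning down the exact meaning of ``locally cohomologically trivial'' so that $D'\CC_U$ reduces to the constant sheaf on the closure and the Whitney tensor product stays in degree zero. Once those classical inputs are cited correctly, both parts are formal consequences of the Section~3 results.
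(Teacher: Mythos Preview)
Your proposal is correct and follows essentially the same route as the paper. The paper's own proof is a one-line reference---``i) is a consequence of Proposition~\ref{lem 8} and ii) follows from Lemma~\ref{lem 7} and Proposition~\ref{lem 8}''---and your write-up simply unpacks those references: you verify the hypotheses of Corollary~\ref{cor 8}/Proposition~\ref{lem 8} (the $\rho_!\C^\infty_{X\times S}$-module structure and the $\Gamma(W;\cdot)$-acyclicity from \cite{KS3}), then invoke the $c$-softness argument that is already contained in the proof of Proposition~\ref{lem 8}, and for (ii) you correctly isolate the extra input that $D'\CC_U$ must be concentrated in degree zero, which is exactly what ``locally cohomologically trivial'' provides.
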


Indeed i) is a consequence of  Proposition \ref{lem 8} and ii) follows from Lemma \ref{lem 7} and  Proposition \ref{lem 8}.

Applying   Proposition \ref{lem 9} and Proposition 7.2.6 of \cite{KS3} we conclude:
\begin{prop} \label{prop 7}Let $G \in D^b_{\rc}(\CC_X)$, $H \in D^b_{\rc}(\CC_S)$. Then
\begin{enumerate}
\item{$\imin\rho\rh(G \boxtimes H,\C^{\infty,t,S}_{X \times S}) \simeq \imin\rho\rh(G \boxtimes \rho_!H,\C^{\infty,t}_{X \times S}) \\ \ \ \ \ \simeq \rh(\CC_X \boxtimes H,\tho(G \boxtimes \CC_S,\C^\infty_{X \times S})),$} \\
\item{$\imin\rho\rh(G \boxtimes H,\db^{t,S}_{X \times S}) \simeq \imin\rho\rh(G \boxtimes \rho_!H,\dbt_{X \times S}) \\ \ \ \ \ \simeq \rh(\CC_X \boxtimes H,\tho(G \boxtimes \CC_S,\db_{X \times S})),$} \\
\item{$\imin\rho\rh(G \boxtimes H,\C^{\infty,\mathrm{w},S}_{X \times S}) \simeq \imin\rho\rh(G \boxtimes \rho_!H,\CW_{X \times S}) \\ \ \ \ \ \simeq \rh(\CC_X \boxtimes H,D'G \boxtimes \CC_S \wtens \C^\infty_{X \times S}).$}
\end{enumerate}
In particular, when $G=\CC_X$ and $H=\CC_S$ we have $\imin\rho\C^{\infty,t,S}_{X \times S} \simeq \C^\infty_{X \times S}$, $\imin\rho\db^{t,S}_{X \times S} \simeq \db_{X \times S}$, $\imin\rho\C^{\infty,\mathrm{w},S}_{X \times S} \simeq \C^\infty_{X \times S}$.
\end{prop}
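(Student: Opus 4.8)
The plan is to obtain Proposition~\ref{prop 7} as a direct corollary of Proposition~\ref{lem 9}, using the acyclicity from Proposition~\ref{prop 8} to pass between $(\cdot)^S$ and $(\cdot)^{RS}$, and the classical ``de-sheafification'' formulas of \cite{KS3} on a single manifold. First I would record that, by Proposition~\ref{prop 8}~i), each of $\C^{\infty,t}_{X\times S}$, $\dbt_{X\times S}$ and $\CW_{X\times S}$ is $(\cdot)^S$-acyclic, so that on these objects $(\cdot)^{RS}$ agrees with $(\cdot)^S$; in particular $(\dbt_{X\times S})^{RS}\simeq\db^{t,S}_{X\times S}$, $(\C^{\infty,t}_{X\times S})^{RS}\simeq\C^{\infty,t,S}_{X\times S}$ and $(\CW_{X\times S})^{RS}\simeq\C^{\infty,\mathrm{w},S}_{X\times S}$. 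Feeding each of these three sheaves, together with the given $G\in D^b_{\rc}(\CC_X)$ and $H\in D^b_{\rc}(\CC_S)$, into Proposition~\ref{lem 9} then yields at once the left-hand isomorphism in each of (1)--(3), as well as the intermediate expression $\rh(\CC_X\boxtimes H,\imin\rho\rh(G\boxtimes\CC_S,F))$.

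The second step is to compute $\imin\rho\rh(G\boxtimes\CC_S,F)$ for $F$ each of the three classical subanalytic sheaves. Here I would note that $G\boxtimes\CC_S=\imin{p_1}G$ is $\R$-constructible on $X\times S$ (being the pullback of an $\R$-constructible sheaf by the projection $p_1$), so Proposition~7.2.6 of \cite{KS3}, applied on the manifold $X\times S$, gives $\imin\rho\rh(G\boxtimes\CC_S,\dbt_{X\times S})\simeq\tho(G\boxtimes\CC_S,\db_{X\times S})$ and, in the same way, $\imin\rho\rh(G\boxtimes\CC_S,\C^{\infty,t}_{X\times S})\simeq\tho(G\boxtimes\CC_S,\C^\infty_{X\times S})$; the Whitney analogue gives $\imin\rho\rh(G\boxtimes\CC_S,\CW_{X\times S})\simeq D'(G\boxtimes\CC_S)\wtens\C^\infty_{X\times S}$, and since $D'(G\boxtimes\CC_S)\simeq D'G\boxtimes D'\CC_S\simeq D'G\boxtimes\CC_S$ this becomes $(D'G\boxtimes\CC_S)\wtens\C^\infty_{X\times S}$. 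Substituting these identifications into the intermediate term produced in the first step yields the right-hand isomorphism in (1)--(3).

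Finally, the ``in particular'' assertion is the special case $G=\CC_X$, $H=\CC_S$ of what has just been proved (or, more directly, of the last line of Proposition~\ref{lem 9}, which gives $\imin\rho F\simeq\imin\rho F^{RS}$ for any $F$): one gets $\imin\rho\db^{t,S}_{X\times S}\simeq\imin\rho\dbt_{X\times S}$, $\imin\rho\C^{\infty,t,S}_{X\times S}\simeq\imin\rho\C^{\infty,t}_{X\times S}$ and $\imin\rho\C^{\infty,\mathrm{w},S}_{X\times S}\simeq\imin\rho\CW_{X\times S}$, and it remains only to quote the classical identifications $\imin\rho\dbt_{X\times S}\simeq\db_{X\times S}$, $\imin\rho\C^{\infty,t}_{X\times S}\simeq\C^\infty_{X\times S}$ and $\imin\rho\CW_{X\times S}\simeq\C^\infty_{X\times S}$ from \cite{KS3}. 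Since Propositions~\ref{lem 9} and~\ref{prop 8} already carry the real weight, I do not anticipate a genuine obstacle here; the only points that need care are the purely formal manipulations of external products --- that $\imin{p_1}G$ is $\R$-constructible and that $D'$ commutes with $\boxtimes$ --- and invoking Proposition~7.2.6 of \cite{KS3} in its correct form for each of $\dbt$, $\C^{\infty,t}$ and $\CW$.
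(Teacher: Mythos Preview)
Your proposal is correct and follows essentially the same approach as the paper, which simply records that the result follows by applying Proposition~\ref{lem 9} together with Proposition~7.2.6 of \cite{KS3}. Your write-up is more explicit in that you spell out the role of Proposition~\ref{prop 8} (to replace $(\cdot)^{RS}$ by $(\cdot)^S$) and the formal identities $D'(G\boxtimes\CC_S)\simeq D'G\boxtimes\CC_S$, but these are exactly the ingredients the paper is tacitly invoking.
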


\begin{lem}\label{L:3}
There is a natural action of $\rho_!\D_{X\times S}$ on $\db^{t,S}_{X\times S}$, on $\C^{\infty,t,S}_{X\times S}$ and on $\C_{X\times S}^{\infty,\mathrm{w},S}$.
\end{lem}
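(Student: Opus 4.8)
The plan is to show that the $\rho_! \D_{X \times S}$-module structure on each of the subanalytic sheaves $\dbt_{X \times S}$, $\C^{\infty,t}_{X \times S}$ and $\CW_{X \times S}$ (which is standard, see \cite{KS3}) is preserved by the endofunctor $(\cdot)^S$. The key observation is that $(\cdot)^S$, being built from $\imin\rho$, $\Gamma_{U \times S}(\cdot)$ and $\imin\eta$, is compatible with the action of differential operators because these operators act ``along $X$ and $S$ separately'' and, more to the point, the presheaf defining $F^{S,\sharp}$ at $U \times V$ is $\lpro{W \subset\subset V} \Gamma(U \times W; F)$, a limit of sections of $F$ over genuine open subsets of $X \times S$; any sheaf-theoretic action of $\rho_! \D_{X \times S}$ on $F$ induces an action on each $\Gamma(U \times W; F)$, compatibly with the transition maps, hence on the limit, hence on $F^{S,\sharp}$ and then on $F^S = \imin\eta F^{S,\sharp}$.

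The steps I would carry out are as follows. First, recall that $\rho_! \D_{X \times S}$ is a sheaf of rings on $(X \times S)_{sa}$ and that, since $\D_{X \times S}$ is generated over $\OO_{X \times S}$ by vector fields, it suffices to check that multiplication by a (local) section of $\OO_{X \times S}$ and the action of a (local) vector field both pass through the construction $(\cdot)^S$. Second, use Lemma \ref{lem 5}: for $U \in \op(X_{sa})$ and $V \in \op(S_{sa})$ one has $\Gamma(U \times V; F^S) \simeq \Gamma(X \times V; \imin\rho\Gamma_{U \times S} F)$. For $F = \db^t_{X \times S}$ this is $\Gamma(X \times V; \tho(\CC_{U \times S}, \db_{X \times S}))$ by Proposition \ref{prop 6}, and similarly for the other two sheaves; each of these right-hand sides is manifestly a module over $\Gamma(X \times V; \D_{X \times S})$, i.e. over the sections of $\rho_! \D_{X \times S}$, because $\tho(\CC_{U \times S}, \db_{X \times S})$ (resp. the Whitney tensor product, resp. $\tho(\CC_{U\times S},\C^\infty_{X\times S})$) carries its usual $\D_{X \times S}$-action. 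Third, check that these module structures are compatible with the restriction morphisms $\Gamma(U' \times V'; F^S) \to \Gamma(U \times V; F^S)$ for $U \subset U'$, $V \subset V'$; this is immediate since the restrictions are induced by the functorial restrictions on $\tho$ / the Whitney tensor product, which are $\D$-linear. This produces an action on the presheaf $U \times V \mapsto \Gamma(U \times V; F^{S,\sharp})$, hence — after sheafifying and applying $\imin\eta$ — on $F^S$ itself. Finally, since $\rho_! \D_{X \times S}$ is the sheafification of the presheaf $W \mapsto \Gamma(\overline W; \D_{X \times S}) = \Gamma(\overline W; \D_{X\times S})$, one needs the action to be continuous in the appropriate sense; but this is automatic because both $\rho_!\D_{X\times S}$ and $F^S$ are honest sheaves on $(X \times S)_{sa}$ and we have defined the action on a basis of the topology compatibly with restrictions.

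A cleaner and more conceptual route, which I would prefer to write up, is to express the action directly at the level of the defining formula. The $\D$-module structure on $\dbt_{X\times S}$ is a morphism of subanalytic sheaves $\rho_!\D_{X\times S} \otimes_{\rho_!\OO_{X\times S}} \dbt_{X\times S} \to \dbt_{X\times S}$; more simply, for each $W \in \op^c((X\times S)_{sa})$ a map $\Gamma(\overline W;\D_{X\times S}) \times \Gamma(W;\dbt_{X\times S}) \to \Gamma(W;\dbt_{X\times S})$. Applying $\Gamma(U \times W;\cdot)$ and taking the inverse limit over $W \subset\subset V$ gives, via Lemma \ref{lem 5}, a pairing $\Gamma(X\times V;\D_{X\times S}) \times \Gamma(U\times V;\F^S) \to \Gamma(U\times V;\F^S)$; the point is that a differential operator defined on all of $X\times V$ certainly restricts to each $U\times W$, so the limit over $W$ of the target is again computed correctly. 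One then observes $\Gamma(X\times V;\D_{X\times S})$ surjects onto the stalk-wise / section-wise data of $\rho_!\D_{X\times S}$ on $U\times V$ — actually it is cleaner to just say $\rho_!\D_{X\times S}(U\times V) = \D_{X\times S}(\overline{U\times V})$ receives a map from $\D_{X\times S}(X\times V) \supset \D_{X\times S}(\overline U \times \overline V)$ — so associativity and the module axioms are inherited from those on $\db_{X\times S}$, $\C^\infty_{X\times S}$.

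The main obstacle I anticipate is purely bookkeeping rather than conceptual: one must be careful that the action ``over $X\times V$'' descends to an action over the $(X\times S)_{sa}$-open set $U\times V$ compatibly with the identification in Lemma \ref{lem 5}, since $F^{S}$ records only the germ of $F$ along $U\times S$ in the $X$-direction while being genuinely a sheaf in the $S$-direction; concretely, one needs that $\D_{X\times S}$ preserves the subsheaf $\Gamma_{U\times S}$ (equivalently $\tho(\CC_{U\times S},-)$) — which it does, because $\CC_{U\times S} = \CC_U \boxtimes \CC_S$ is $\rho$-locally constant along $S$ and $\D_{X\times S}$ preserves supports — and that it is compatible with the $\imin\eta$ in \eqref{E:10}, which follows since $\imin\eta$ is just a change-of-sites inverse image and commutes with tensoring by the constant object $\rho_!\D_{X\times S}$. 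Once these compatibilities are recorded the three cases are handled uniformly, the holomorphic cases $\OO^{t,S}_{X\times S}$ and $\OO^{\mathrm{w},S}_{X\times S}$ then inheriting the $\rho_!\D_{X\times S}$-action (for $X,S$ complex) through the Dolbeault complex exactly as in the absolute case.
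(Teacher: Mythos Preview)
Your proposal is correct and follows essentially the same strategy as the paper: use Lemma~\ref{lem 5} to identify $\Gamma(U\times V;F^S)$ with an object on which $\D_{X\times S}$ visibly acts, check compatibility with restrictions, and pass to the sheaf. The paper's write-up differs only in organization---it invokes Proposition~3.2.1 of \cite{P} to reduce to the presheaf $\eta^\dagger F^{S,\sharp}$, and for a general relatively compact subanalytic $W$ (not a product) it explicitly covers $\overline{W}$ by finitely many products $U_i\times V_i$ and glues the actions, which is precisely the step you summarize as ``products form a basis, so this is automatic.''
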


\begin{proof}
The proof being similar in the three cases, we just give it for the first one.
By Proposition 3.2.1 of \cite{P}, it is enough to check that the presheaf $$
\eta^\dagger \db^{t,S,\sharp}_{X\times S}(W)= \lind {W \subset W'} \db^{t,S,\sharp}_{X\times S}(W')
$$
with $W' \in \op(X_{sa} \times S_{sa})$, is a presheaf over the presheaf of rings $W\mapsto \Gamma(\overline {W};\D_{X\times S})$. Setting $W'=U\times V$,  we have by Lemma 2.1 $$\Gamma(U \times V;\db^{t,S,\sharp}_{X\times S})=\Gamma(U \times V;\dbt_{X \times V}).$$

We may assume that  $W\in\op^c((X\times S)_{sa})$. Thus we can cover $\overline{W}$ by finite many open subsets $\{U_{i}\times V_i\}$, $\{U'_i\times V'_i\}$ with $U_i\times V_i, U'_i\times V'_i\in \op^c(X_{sa}\times S_{sa})$ sufficiently small and such that $U'_i\times V'_i\subset\subset U_i\times V_i$. Given $P\in\Gamma(\overline{W};\D_{X\times S})$, for a convenient covering  $\{U_{i}\times V_i\}$  as above, $P$ is defined on $\bigcup_iU_i\times V_i$. We then deduce the action of $P$ on $\lind {W \subset W'} \Gamma(W'; \db^{t,S,\sharp}_{X\times S})$   as the image of the gluing of the actions  on each $\Gamma(U'_i\times V'_i; \db^t_{X \times V'_i})$.
\end{proof}

Let us now assume that $X$ and $S$  are complex manifolds and consider the projection $f:X\times S\to S$. Let us denote as usual by $\overline{X}\times\overline{S}$ the complex conjugate manifold. Identifying the underlying real analytic manifold $X_{\R} \times S_{\R}$ to the diagonal of $(X \times S) \times (\overline{X} \times \overline{S})$, we have:

\begin{lem}\label{O_S}
 $\rho_*\imin f\OO_S$ (resp. $\rho'_*f^{-1}\OO_S$) acts on $\db^{t,S}_{X \times S}$, on $\C^{\infty,t}_{X \times S}$ and on $\C_{X \times S}^{\mathrm{w},S}$ (resp. on $\db^{t,S,\sharp}_{X \times S}$, on $\C^{\infty,t,S.\sharp}_{X \times S}$ and on $\C_{X \times S}^{\mathrm{w},S,\sharp}$).
\end{lem}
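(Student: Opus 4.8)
The plan is to construct the action first for the $\sharp$-sheaves, which live on $X_{sa}\times S_{sa}$, and then to transport it to $(X\times S)_{sa}$ by means of $\imin\eta$. The three sheaves can be handled uniformly: for each of them, and for $U\times W$ a relatively compact product, the sections over $U\times W$ form a module over $\Gamma(\overline{U\times W};\D_{X\times S})$ (germs of differential operators along the closure) — this is the starting point of Lemma \ref{L:3} and is classical (cf.\ \cite{KS3}) — so the argument written below for $\db^{t,S}_{X\times S}$ applies verbatim to $\C^{\infty,t,S}_{X\times S}$ and $\C^{\infty,\mathrm{w},S}_{X\times S}$.

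First I would check that $\rho'_*\imin f\OO_S$ acts on $\db^{t,S,\sharp}_{X\times S}$. Both are sheaves on $X_{sa}\times S_{sa}$, so it suffices to define the action on sections over the basis of relatively compact products $U\times V$, compatibly with restrictions. By Lemma \ref{lem 5} and the construction of $\db^{t,S,\sharp}_{X\times S}$ one has $\Gamma(U\times V;\db^{t,S,\sharp}_{X\times S})\simeq\lpro{W\subset\subset V}\Gamma(U\times W;\dbt_{X\times S})$, while $\Gamma(U\times V;\rho'_*\imin f\OO_S)=\Gamma(U\times V;\imin f\OO_S)$. A section of $\imin f\OO_S$ over $U\times V$ is, over each connected component of $U$, the pull-back by $f$ of a holomorphic function on $V$; for $W\subset\subset V$ it is holomorphic on a neighbourhood of the compact $\overline{U\times W}$ (namely on all of $X\times V$), hence defines a germ of differential operator of order $0$ along $\overline{U\times W}$. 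Such a germ acts on $\Gamma(U\times W;\dbt_{X\times S})$ and preserves temperedness, being bounded together with all its derivatives on $\overline{U\times W}$; the actions obtained for varying $W\subset\subset V$ are compatible with the transition maps, hence pass to the projective limit, and then globalize over $U\times V$. This runs parallel to the proof of Lemma \ref{L:3}, the simplification being that a section of $\imin f\OO_S$, being already holomorphic on $V$, extends automatically to a neighbourhood of any $\overline W\subset V$, so no passage to closures is needed to produce an operator.

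Next I would apply $\imin\eta$, which respects rings and modules, to get on $\db^{t,S}_{X\times S}=\imin\eta\db^{t,S,\sharp}_{X\times S}$ the structure of a module over $\imin\eta\rho'_*\imin f\OO_S$, and then identify this sheaf of rings with $\rho_*\imin f\OO_S$. Since $\rho'=\eta\circ\rho$ one has $\rho'_*\imin f\OO_S=\eta_*\rho_*\imin f\OO_S$, and the counit of the adjunction $\imin\eta\dashv\eta_*$ provides a morphism of sheaves of rings $\imin\eta\rho'_*\imin f\OO_S\to\rho_*\imin f\OO_S$. I would check it is an isomorphism on stalks: the subanalytic open products $U\times V$ containing a given $y\in X\times S$ form a basis of neighbourhoods of $y$ and belong both to $\op((X\times S)_{sa})$ and to $\op(X_{sa}\times S_{sa})$, so, unwinding the definition of $\imin\eta$, both stalks at $y$ equal $\lind{U\times V\ni y}\Gamma(U\times V;\imin f\OO_S)\simeq(\imin f\OO_S)_y$ and the morphism is the identity there. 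This transfers the module structure to $\db^{t,S}_{X\times S}$ over $\rho_*\imin f\OO_S$.

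The step I expect to be the main obstacle is exactly this last identification $\imin\eta\rho'_*\imin f\OO_S\simeq\rho_*\imin f\OO_S$: Remark \ref{oss:T} shows that the counit $\imin\eta\eta_*\to\id$ is not an isomorphism in general, so the argument must use in an essential way that $\imin f\OO_S$ is the inverse image of a sheaf on the topological space $X\times S$, and that the products $U\times V$ form a basis of the site $(X\times S)_{sa}$ and not merely of $X_{sa}\times S_{sa}$. A secondary point requiring some care is to make precise the description of sections of $\imin f\OO_S$ over $U\times V$ as componentwise pull-backs of holomorphic functions on $S$, and to invoke correctly the classical $\D_{X\times S}$-module structure on $\dbt_{X\times S}$, $\C^{\infty,t}_{X\times S}$ and $\C^{\infty,\mathrm{w}}_{X\times S}$, which guarantees that multiplication by such a section preserves the tempered, respectively Whitney, growth conditions.
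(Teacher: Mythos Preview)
Your treatment of the $\sharp$-part is correct and coincides with the paper's: on products $U\times V$ (with $U$ contractible), sections of $\rho'_*\imin f\OO_S$ reduce to $\OO_S(V)$, and these act on $\lpro{W\subset\subset V}\Gamma(U\times W;\dbt_{X\times S})$ as bounded smooth multipliers on each term, compatibly with the transition maps.

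The gap is precisely where you flagged it, and it is a real one. Checking the counit $\imin\eta\rho'_*\imin f\OO_S\to\rho_*\imin f\OO_S$ on ``stalks at $y\in X\times S$'' only computes $\imin\rho$ of each side; but $\imin\rho$ is not conservative on $\mod(\CC_{(X\times S)_{sa}})$, so an isomorphism after $\imin\rho$ does not yield an isomorphism of subanalytic sheaves. This is the same phenomenon behind Remark~\ref{oss:T}: $\imin\eta\eta_*\CC_{\overline{B}_1}\not\simeq\CC_{\overline{B}_1}$, yet the two agree after $\imin\rho$. Without the isomorphism the counit goes the wrong way for restriction of scalars, and the $\rho_*\imin f\OO_S$-module structure on $\db^{t,S}_{X\times S}$ cannot be deduced from the $\imin\eta\rho'_*\imin f\OO_S$-module structure you obtain.

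The paper does not attempt this transfer. For the $\rho_*$-action it works directly on $(X\times S)_{sa}$: by Wilkie~\cite{W} every relatively compact subanalytic open is covered by finitely many open cells $W$ whose fibers for $f$ are contractible or empty; on such a cell $\Gamma(W;\rho_*\imin f\OO_S)=\OO_S(f(W))$, and a holomorphic function on $f(W)$ multiplies sections of $\db^{t,S}_{X\times S}$ over $W$ because, by construction of the relative sheaf, there is no growth condition along the boundary of $\imin f(f(W))$. This bypasses the identification $\imin\eta\rho'_*\simeq\rho_*$ altogether.
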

\begin{proof}
 To prove the action of $\rho_*\imin f\OO_S$ it is sufficient to check that $\rho_*\imin f\OO_S(W)$ acts on $\db^{t,S}_{X \times S}(W)$ on a basis for the topology of $(X \times S)_{sa}$. Since every relatively compact subanalytic open subset of $X \times S$ can be covered by open cells (cf \cite{W}), we may suppose that $W$ is an open cell such that $f|_W:W \to f(W)$ is the restriction of a composition of projections $f_j:\R^j \times f(W) \to \R^{j-1} \times f(W)$ and the fibers of $f$ intersected with $W$ are contractible or empty. In this case we have $\rho_*\imin f\OO_S(W)=\OO_S(f(W))$ and $\OO_S(f(W))$ acts on $\db^{t,S}_{X \times S}(W)$, since $\db^{t,S}_{X \times S}(W)$ has no growth conditions on the boundary of $\imin f(f(W))$.
 The proof is similar for $\C^{\infty,t}_{X \times S}$ and for  $\C_{X \times S}^{\mathrm{w},S}$.

 Similarly, to prove the action of $\rho'_*f^{-1}\OO_S$ it is sufficient to check that $\rho'_*\imin f\OO_S(U\times V)\simeq \OO_S(V)$ acts on $\db^{t,S}_{X \times S}(U\times V)$ where $U\in\op(Y_{sa})$ is assumed to be contractible and $V\in\op(S_{sa}).$ Since $\db^{t,S}_{X \times S}(U\times V)\simeq \lpro {W \subset\subset V, W\in\op^c(S_{sa})}\Gamma(U \times W; \db_{X \times S}^t)$ the statement is clear. \end{proof}

The construction given by (\ref{E:10}) allows us to introduce the following objects of $D^b(\CC_{(X \times S)_{sa}})$:
\begin{enumerate}
\item{$\OO_{X \times S}^{t,S}:=(\ot_{X \times S})^{RS}$, the relative sheaf associated  to $\ot_{X \times S}$, that is $$\OO_{X \times S}^{t,S}  \simeq  (\rh_{\rho_!\D_{\overline{X} \times \overline{S}}}(\rho_!\OO_{\overline{X} \times \overline{S}},\dbt_{X \times S}))^{RS} \simeq  (\rh_{\rho_!\D_{\overline{X} \times \overline{S}}}(\rho_!\OO_{\overline{X} \times \overline{S}},\C^{\infty,t}_{X \times S}))^{RS}$$}
\item{$\OO_{X \times S}^{\mathrm{w},S}:=(\OW_{X \times S})^{RS}$, the relative sheaf associated to $\OW_{X \times S}$, that is $$\OO_{X \times S}^{\mathrm{w},S} \simeq  (\rh_{\rho_!\D_{\overline{X} \times \overline{S}}}(\rho_!\OO_{\overline{X} \times \overline{S}},\CW_{X \times S}))^{RS}.$$}

\end{enumerate}

The  exactness of $\rho_!$   together with Proposition
\ref{prop 8}
 allow to conclude:
\begin{prop} We have the following isomorphisms in $D^b(\CC_{(X \times S)_{sa}})$.
\begin{eqnarray*}
\OO_{X \times S}^{t,S} \simeq \rh_{\rho_!\D_{\overline{X} \times \overline{S}}}(\rho_!\OO_{\overline{X} \times \overline{S}},\db^{t,S}_{X \times S})
 \simeq \rh_{\rho_!\D_{\overline{X} \times \overline{S}}}(\rho_!\OO_{\overline{X} \times \overline{S}},\C^{\infty,t,S}_{X \times S}) \\
\OO_{X \times S}^{\mathrm{w},S} \simeq \rh_{\rho_!\D_{\overline{X} \times \overline{S}}}(\rho_!\OO_{\overline{X} \times \overline{S}},\C^{\infty,\mathrm{w},S}_{X \times S}). \end{eqnarray*}
\end{prop}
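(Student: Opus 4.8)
The plan is to reduce the statement to the $(\cdot)^S$-acyclicity of $\dbt_{X\times S}$, $\C^{\infty,t}_{X\times S}$ and $\CW_{X\times S}$ established in Proposition \ref{prop 8} i), together with the exactness of $\rho_!$. Recall that, by the very definition of $\ot_{X\times S}$ and using the Spencer resolution of $\OO_{\overline{X}\times\overline{S}}$ as a left $\D_{\overline{X}\times\overline{S}}$-module by finite locally free $\D_{\overline{X}\times\overline{S}}$-modules, the object $\rh_{\rho_!\D_{\overline{X}\times\overline{S}}}(\rho_!\OO_{\overline{X}\times\overline{S}},\F)$ is represented, for any $\rho_!\D_{\overline{X}\times\overline{S}}$-module $\F$, by the Dolbeault complex $K^\bullet(\F)$, a bounded complex whose terms are, locally on $X\times S$, finite direct sums of copies of $\F$ and whose differentials are induced by the $\overline\partial$-action of $\rho_!\D_{\overline{X}\times\overline{S}}$; explicitly,
\[
K^\bullet(\F)\;\simeq\;\rho_!\Omega^{\bullet}_{\overline{X}\times\overline{S}}\otimes_{\rho_!\OO_{\overline{X}\times\overline{S}}}\F.
\]
Here $\F$ will be $\dbt_{X\times S}$ or $\C^{\infty,t}_{X\times S}$ (for which $K^\bullet(\dbt_{X\times S})\simeq K^\bullet(\C^{\infty,t}_{X\times S})$ already in $D^b(\CC_{(X\times S)_{sa}})$, which is why the two descriptions of $\OO^{t,S}_{X\times S}$ will coincide), or $\CW_{X\times S}$ in the Whitney case.

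First I would note that, $(\cdot)^S$ being additive and $\rho_!$ exact, the acyclicity of Proposition \ref{prop 8} i) propagates to finite direct sums and, locally, to the twists $\rho_!\Omega^{p}_{\overline{X}\times\overline{S}}\otimes_{\rho_!\OO_{\overline{X}\times\overline{S}}}\F$ of a $(\cdot)^S$-acyclic sheaf. Hence, for $\F$ one of the three sheaves above, $K^\bullet(\F)$ is a bounded complex of $(\cdot)^S$-acyclic objects, so that $\bigl(K^\bullet(\F)\bigr)^{RS}\simeq\bigl(K^\bullet(\F)\bigr)^{S}$ can be computed by applying the left exact functor $(\cdot)^S$ termwise. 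I would then identify $\bigl(K^\bullet(\F)\bigr)^{S}$ with $K^\bullet(\F^S)$: the terms agree by additivity, and the differentials agree because the $\rho_!\D_{\overline{X}\times\overline{S}}$-module structures used to write them are compatible with $(\cdot)^S$ — this compatibility is exactly what Lemma \ref{L:3} and Lemma \ref{O_S} supply, the action on $\db^{t,S}_{X\times S}$ (and likewise on $\C^{\infty,t,S}_{X\times S}$ and $\C^{\infty,\mathrm{w},S}_{X\times S}$) being built there from the action on $\dbt_{X\times V}$ through the identification $\Gamma(U\times V;\db^{t,S,\sharp}_{X\times S})\simeq\Gamma(U\times V;\dbt_{X\times V})$. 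Finally, since $\F$ is itself $(\cdot)^S$-acyclic we have $\F^S\simeq\F^{RS}$, so $K^\bullet(\F^S)$ represents $\rh_{\rho_!\D_{\overline{X}\times\overline{S}}}(\rho_!\OO_{\overline{X}\times\overline{S}},\F^{RS})$; chaining these isomorphisms yields $(\ot_{X\times S})^{RS}\simeq\rh_{\rho_!\D_{\overline{X}\times\overline{S}}}(\rho_!\OO_{\overline{X}\times\overline{S}},\db^{t,S}_{X\times S})\simeq\rh_{\rho_!\D_{\overline{X}\times\overline{S}}}(\rho_!\OO_{\overline{X}\times\overline{S}},\C^{\infty,t,S}_{X\times S})$, and similarly $(\OW_{X\times S})^{RS}\simeq\rh_{\rho_!\D_{\overline{X}\times\overline{S}}}(\rho_!\OO_{\overline{X}\times\overline{S}},\C^{\infty,\mathrm{w},S}_{X\times S})$.

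The step I expect to be the main obstacle is the one just glossed as ``the differentials agree'': one must verify that applying $(\cdot)^S$ to the $\overline\partial$-morphisms of $K^\bullet(\dbt_{X\times S})$ reproduces precisely the morphisms defining the Dolbeault complex of $\db^{t,S}_{X\times S}$ via Lemmas \ref{L:3} and \ref{O_S}, and that this is coherent with the only locally defined vector-bundle twists $\rho_!\Omega^{p}_{\overline{X}\times\overline{S}}$. This is where the exactness of $\rho_!$ is genuinely used, together with the fact that $(\cdot)^S$ is a functor on the whole abelian category of subanalytic sheaves, hence commutes with finite direct sums and with $\rho_!\OO_{\overline{X}\times\overline{S}}$-linear morphisms. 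A minor but real point is that the Whitney case requires the $(\cdot)^S$-acyclicity of $\CW_{X\times S}$, available from Proposition \ref{prop 8} i), and not the more restrictive $\Gamma(U\times V;\cdot)$-acyclicity of part ii), which holds only for cohomologically trivial $U$.
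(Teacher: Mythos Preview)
Your proposal is correct and follows the same route the paper sketches in one line (``The exactness of $\rho_!$ together with Proposition~\ref{prop 8} allow to conclude''): represent $\rh_{\rho_!\D_{\overline{X}\times\overline{S}}}(\rho_!\OO_{\overline{X}\times\overline{S}},\F)$ by the Dolbeault complex, use the $(\cdot)^S$-acyclicity of $\F$ and of its twists by the locally free $\rho_!\Omega^p_{\overline{X}\times\overline{S}}$ to compute $(\cdot)^{RS}$ termwise, and identify the result with the Dolbeault complex of $\F^S$. One small remark: the reference to Lemma~\ref{O_S} is not needed here, since the differentials of the Dolbeault complex involve only the $\rho_!\D_{\overline{X}\times\overline{S}}$-action already supplied by Lemma~\ref{L:3}; Lemma~\ref{O_S} concerns the separate $\rho_*\imin f\OO_S$-action.
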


 Proposition \ref{lem 9} together with Proposition 7.3.2 of \cite{KS3} entail:
\begin{prop} \label{prop 7}Let $G \in D^b_{\rc}(\CC_X)$, $H \in D^b_{\rc}(\CC_S)$. Then
\begin{enumerate}
\item{$\imin\rho\rh(G \boxtimes H,\OO^{t,S}_{X \times S}) \simeq \imin\rho\rh(G \boxtimes \rho_!H,\ot_{X \times S}) \\ \ \ \ \ \simeq \rh(\CC_X \boxtimes H,\tho(G \boxtimes \CC_S,\OO_{X \times S})),$} \\
\item{$\imin\rho\rh(G \boxtimes H,\OO^{\mathrm{w},S}_{X \times S}) \simeq \imin\rho\rh(G \boxtimes \rho_!H,\OW_{X \times S}) \\ \ \ \ \ \simeq \rh(\CC_X \boxtimes H,D'G \boxtimes \CC_S \wtens \OO_{X \times S}).$}
\end{enumerate}
In particular, when $G=\CC_X$ and $H=\CC_S$ we have $\imin\rho\OO^{t,S}_{X \times S} \simeq \OO_{X \times S}$, $\imin\rho\OO^{\mathrm{w},S}_{X \times S} \simeq \OO_{X \times S}$.
\end{prop}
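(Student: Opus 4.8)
The plan is to deduce this proposition by combining Proposition \ref{lem 9} with Proposition 7.3.2 of \cite{KS3}, exactly as the analogous statement for $\db^{t,S}_{X\times S}$, $\C^{\infty,t,S}_{X\times S}$ and $\C^{\infty,\mathrm{w},S}_{X\times S}$ was obtained above from Proposition \ref{lem 9} and Proposition 7.2.6 of \cite{KS3}. First I would recall that, by the definitions just introduced, $\OO^{t,S}_{X\times S}=(\ot_{X\times S})^{RS}$ and $\OO^{\mathrm{w},S}_{X\times S}=(\OW_{X\times S})^{RS}$, with $\ot_{X\times S},\OW_{X\times S}\in D^b(\CC_{(X\times S)_{sa}})$. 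Hence Proposition \ref{lem 9} applies verbatim with $F=\ot_{X\times S}$ (resp. $F=\OW_{X\times S}$): it yields at once the first isomorphism of (1), namely $\imin\rho\rh(G\boxtimes H,\OO^{t,S}_{X\times S})\simeq\imin\rho\rh(G\boxtimes\rho_!H,\ot_{X\times S})$ (resp. of (2)), and simultaneously rewrites this object as $\rh(\CC_X\boxtimes H,\imin\rho\rh(G\boxtimes\CC_S,\ot_{X\times S}))$ (resp. $\rh(\CC_X\boxtimes H,\imin\rho\rh(G\boxtimes\CC_S,\OW_{X\times S}))$).

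It then remains to identify the inner terms $\imin\rho\rh(G\boxtimes\CC_S,\ot_{X\times S})$ and $\imin\rho\rh(G\boxtimes\CC_S,\OW_{X\times S})$. For this I would invoke Proposition 7.3.2 of \cite{KS3}, which gives, for an $\R$-constructible sheaf $K$ on $X\times S$, the identifications $\imin\rho\rh(K,\ot_{X\times S})\simeq\tho(K,\OO_{X\times S})$ and $\imin\rho\rh(K,\OW_{X\times S})\simeq D'K\wtens\OO_{X\times S}$. Applying these to $K=G\boxtimes\CC_S$ --- which is $\R$-constructible on $X\times S$ because $G\in D^b_{\rc}(\CC_X)$ --- and using $D'(G\boxtimes\CC_S)\simeq D'G\boxtimes\CC_S$, one obtains the second isomorphism in both (1) and (2), completing the two displayed chains. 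For the last assertion I would specialise to $G=\CC_X$, $H=\CC_S$: then $G\boxtimes H=\CC_{X\times S}$, so $\rh(G\boxtimes H,\cdot)\simeq\id$, while $\tho(\CC_{X\times S},\OO_{X\times S})\simeq\OO_{X\times S}$ and $D'\CC_X\boxtimes\CC_S\wtens\OO_{X\times S}\simeq\CC_{X\times S}\wtens\OO_{X\times S}\simeq\OO_{X\times S}$, whence $\imin\rho\OO^{t,S}_{X\times S}\simeq\OO_{X\times S}$ and $\imin\rho\OO^{\mathrm{w},S}_{X\times S}\simeq\OO_{X\times S}$.

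Apart from this bookkeeping, the only genuinely delicate point --- and thus the main obstacle --- is to make sure that Proposition 7.3.2 of \cite{KS3} is available in precisely the form $\imin\rho\rh(\cdot,\ot)\simeq\tho(\cdot,\OO)$ together with its Whitney analogue in terms of $D'$ and $\wtens$, and that these identifications (stated there on a single manifold) may legitimately be applied on the product $X\times S$ to the external-product sheaf $G\boxtimes\CC_S$; in particular one must use that $D'$ commutes with $\boxtimes$ for $\R$-constructible sheaves, so that $D'(G\boxtimes\CC_S)\simeq D'G\boxtimes\CC_S$. Everything else is a formal computation in $D^b(\CC_{(X\times S)_{sa}})$ resting on Proposition \ref{lem 9} and on the definitions $\OO^{t,S}_{X\times S}=(\ot_{X\times S})^{RS}$, $\OO^{\mathrm{w},S}_{X\times S}=(\OW_{X\times S})^{RS}$.
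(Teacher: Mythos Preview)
Your proposal is correct and follows exactly the paper's approach: the paper simply states that the proposition is entailed by Proposition \ref{lem 9} together with Proposition 7.3.2 of \cite{KS3}, and your write-up spells out precisely this two-step argument (apply Proposition \ref{lem 9} with $F=\ot_{X\times S}$ or $F=\OW_{X\times S}$, then identify the inner term via Proposition 7.3.2 of \cite{KS3} applied on $X\times S$ to $G\boxtimes\CC_S$). The additional remarks you make about $D'(G\boxtimes\CC_S)\simeq D'G\boxtimes\CC_S$ and the specialisation $G=\CC_X$, $H=\CC_S$ are routine and consistent with the paper's intent.
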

As a consequence of Lemma \ref{lem 7} together with the results in \cite{B} we obtain the following characterization of the sections of $\OO_{X\times S}^{t,S}$:

\begin{prop}
Assume that $U$ (resp. $V$) is a subanalytic Stein open subset of the Stein manifold $X$ (resp. of $S$). Then $R\Gamma(U\times V; \OO^{t,S}_{X\times S})$ is concentrated in degree zero and  $\Gamma(U\times V;\OO^{t;S}_{X\times S})$ is the set of holomorphic functions on $U\times V$ which are tempered on $X\times V$.
\end{prop}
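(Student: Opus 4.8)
The plan is to combine the computation of $R\Gamma$ of $(\cdot)^{RS}$ provided by Lemma~\ref{lem 7} with a known acyclicity/Stein statement for the non-relative tempered holomorphic sheaf $\OO^t_{X \times S}$. First I would apply Lemma~\ref{lem 7} with $G = \CC_U$ and $H = \CC_V$ to the complex $F = \ot_{X \times S}$ (or rather to $\C^{\infty,t}_{X \times S}$, via the identification $\OO^{t,S}_{X \times S} \simeq (\rh_{\rho_!\D_{\overline X \times \overline S}}(\rho_!\OO_{\overline X \times \overline S}, \C^{\infty,t}_{X \times S}))^{RS}$ from the Proposition above), obtaining
$$
R\Gamma(U \times V; \OO^{t,S}_{X \times S}) \simeq R\Gamma(X \times V; \imin\rho R\Gamma_{U \times S}\, \ot_{X \times S}).
$$
Here I use that $R\Gamma(U\times V;\rh(\CC_U\boxtimes\CC_V,\cdot))\simeq \Rh(\CC_U\boxtimes\rho_!\CC_V,\cdot)\simeq R\Gamma(X\times V;\imin\rho R\Gamma_{U\times S}(\cdot))$, the last step by adjunction and the defining formula for $F^{S,\sharp}$. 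Concretely this says: a section over $U\times V$ of the relative tempered holomorphic sheaf is a holomorphic function on $U\times V$ which, restricted to each $U\times W$ with $W\subset\subset V$, extends as a tempered holomorphic function to $X\times W$ — i.e.\ is tempered on $X\times V$.

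The remaining work is to show this complex is concentrated in degree zero under the Stein hypotheses, and to identify $H^0$. For the concentration, I would argue in two stages. First, the inner object $\imin\rho R\Gamma_{U\times S}\,\ot_{X\times S}$ is concentrated in degree zero: this is exactly the statement that $\ot_{X\times S}$ (equivalently, its Dolbeault-type resolution built from $\db^t$ or $\C^{\infty,t}$) has vanishing $R\Gamma_{U\times S}$-cohomology in nonzero degree, which follows from the acyclicity results of \cite{B} on tempered holomorphic functions together with the subanalytic-sheaf formalism already invoked in the paper (the same mechanism used in Proposition~\ref{lem 8} and Proposition~\ref{prop 8}). Second, $R\Gamma(X\times V;\cdot)$ of that degree-zero sheaf vanishes in positive degree: since $V$ is Stein in $S$ and the fibers direction $X$ carries no growth condition, one reduces to a statement of the type ``tempered Dolbeault cohomology on $X\times V$ vanishes in positive degree for $V$ Stein,'' which is again the content of \cite{B} (tempered analogue of Cartan's Theorem~B), applied on $X\times V$ — here the relevant open set on the $X$ side is all of $X$, so no Stein condition on $U$ itself is needed, only on $V$, but the problem also has to be localized compatibly with $U$ subanalytic Stein so that the extension-to-$X\times W$ data glues; I would handle the $U$-dependence through the $\Gamma_{U\times S}$ already present.

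Finally, for the identification of $H^0$: unwinding, $H^0 R\Gamma(X\times V;\imin\rho\Gamma_{U\times S}\ot_{X\times S}) = \lpro{W\subset\subset V}\Gamma(U\times W;\ot_{X\times S})$ by Lemma~\ref{lem 5}, and each $\Gamma(U\times W;\ot_{X\times S})$ is the space of holomorphic functions on $U\times W$ tempered on $X\times W$ (by the defining property of $\ot$ via $\tho$); passing to the projective limit over $W\subset\subset V$ gives precisely the holomorphic functions on $U\times V$ that are tempered on $X\times V$, since temperedness on $X\times V$ is by definition a condition tested on the exhaustion $\{X\times W\}_{W\subset\subset V}$. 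The main obstacle I anticipate is the positive-degree vanishing of the tempered Dolbeault cohomology on $X\times V$: this is where the Stein hypothesis on $V$ (and, for the gluing of the extension data, subanalytic Steinness of $U$) genuinely enters, and it must be imported carefully from \cite{B} in a form compatible with the subanalytic site and with the $\varprojlim$ over $W\subset\subset V$ — in particular one needs the $\varprojlim^1$ terms to vanish, which should follow from a Mittag-Leffler argument since the transition maps are restrictions of spaces of tempered holomorphic functions with dense-image / surjectivity properties on Stein sets.
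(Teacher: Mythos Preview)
Your proposal is correct and follows the same route as the paper, which derives the proposition in one line as a consequence of Lemma~\ref{lem 7} together with the results of~\cite{B}. The only superfluous step is your final concern about $\varprojlim^1$: Lemma~\ref{lem 7} already yields the \emph{derived} isomorphism $R\Gamma(U\times V;\OO^{t,S}_{X\times S})\simeq \Rh(\CC_X\boxtimes\CC_V,\tho(\CC_{U\times S},\OO_{X\times S}))$ with no projective limit appearing on the derived level, so the Stein vanishing from~\cite{B} applies directly and no Mittag-Leffler argument is needed.
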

\begin{es}
Let $U=\{z\in\CC, \Im z>0\}$, let $V$ be open subanalytic in $\CC$ and let $g(s)$ be a holomorphic function on $V$. Then, after a choice of a determination of $log\,z$ on $U$, $z^{g(s)}$ defines a section of $\Gamma(U\times V; \OO^{t,S}_{\CC\times \CC})$.
\end{es}

Recall that any distribution on $\R^n$ is, as an hyperfunction, the boundary value of some holomorphic function  on  $\Omega\cap\{(z_1,..., z_n)\in\CC^n, \Im z_i\neq 0\}$, with  moderate growth with respect to $\R^n$, for some  Stein open neighborhood $\Omega$ of $\R^n$ in $\CC^n$. For a precise notion of boundary value and classical hyperfunction theory we refer to the foundational work \cite{SKK}. By $(2)$ of Proposition \ref{prop 6} we deduce the following example:

\begin{es}Let $U=\R_{>0}$ with a coordinate $x$, let $V$ be a subanalytic open set in $\R$ and let $a(s)$ be any continuous function on $V$. Let $f\in\Gamma(\Omega\setminus V;\OO_{\CC})$, where $\Omega$ is an open neighborhood of $V$ in $\CC$, be such that $a$ is the boundary value $vb(f)$ of $f$ as an hyperfunction. Then $x^{a}_{+}:=vb(z^f)$, with $arg \,z\in ]0,2\pi[$,  is a section of $\Gamma(U\times V; \db^{t,S}_{\R\times \R})$.
\end{es}

\begin{oss}\label{controesempio}
As the reader can naturally ask, our method applies only for products of analytic manifolds, i.e, for a projection, since the crucial trick  we used here is that the allowed coverings are formed by products of open subanalytic sets and products are not kept by change of coordinates. So, if we want to treat the case of a general smooth $f:X\to S$,  we have to consider on $X$ a topology with adapted coverings which are less than that of the subanalytic topology. This can be illustrated with  $X=\R^2=\R \times \R$ with coordinates $(x,y)$, $S=\R$ and $f:X\to S$ the second projection. Consider $U  = ]0,1[ \times ]-1,1[$ and the open covering

\begin{eqnarray*}
U_1 & = & U \cap \{y<x\} \\
U_2 & = & ]0,1[ \times ]0,1[
\end{eqnarray*}
(so $U_1$ is not a product of intervals).
Consider the relative tempered distributions $s_1=0$ on $U_1$ and $s_2=\chi_{\{y=2x\}}\exp(1/y)$ on $U_2$ ($\chi_{\{y=2x\}}$ denotes the characteristic function). Then $s_1=s_2=0$ on $U_1 \cap U_2$, hence they glue to a distribution on $U$ which is not relative tempered.

Hence, if we want to realize relative tempered distributions with  respect to a smooth function as a sheaf on a site, we must avoid such kind of coverings. We conjecture, however, that with  a weaker notion of subanalytic site as in \cite{GS} a notion of relative sheaf can be given for a general smooth function.
\end{oss}


\begin{thebibliography}{99}
\bibitem{B}
O. Berni,
{\em A vanishing theorem for formal cohomology of perverse sheaves,}
\/J. Funct. Anal. {\bf 158} 1998, pp. 267-288.

\bibitem{vdD}
L. van den Dries,
{\em Tame Topology and O-minimal Structures,}
\/London Math. Society Lecture Notes Series {\bf 248}, Cambridge University Press (1998).

\bibitem{EP}
M. Edmundo and L. Prelli
{\em Sheaves on $\T$-topologies},
\/arXiv:1002.0690v2.

\bibitem{GS}
S. Guillermou and P. Schapira, {\em Subanalytic topologies I. Construction of sheaves,}
\/arXiv:1212.4326

\bibitem{Ka1}
M. Kashiwara,
{\em The Riemann-Hilbert problem for holonomic systems,}
\/Publ. RIMS, Kyoto Univ. $\bf{20}$ 1984, pp. 319-365.

\bibitem{Ka2}
M. Kashiwara,
{\em $\mathcal{D}$-modules and microlocal calculus,}
\/Translations of Mathematical Monographs {\bf 217}, American Math. Soc. (2003).

\bibitem{TS}
T. Monteiro Fernandes and C. Sabbah,
{\em On the de Rham Complex of Mixed Twistor $\mathcal{D}$-modules,}
\/Int. Math. Res. Notices, in press.



\bibitem{KS2}
M. Kashiwara and P. Schapira,
{\em Moderate and formal cohomology associated with constructible sheaves,}
\/M\'emoires Soc. Math. France $\bf{64}$ (1996).

\bibitem{KS4}
M. Kashiwara and P. Schapira
{\em Categories and sheaves,}
\/Grundlehren der Math. Wiss. {\bf 332}, Springer-Verlag (2006).

\bibitem{KS1}
M. Kashiwara and P. Schapira,
{\em Sheaves on Manifolds,}
\/Grundlehren der Math. Wiss. {\bf 292}, Springer-Verlag (1990).

\bibitem{KS3}
M. Kashiwara and P. Schapira,
{\em Ind-sheaves,}
\/Ast{\'e}risque $\bf{271}$ (2001).

\bibitem{P} L. Prelli,
{\em Sheaves on subanalytic sites,}
\/Rend. Sem. Mat. Univ. Padova. {\bf 120} 2008, pp. 167-216.

\bibitem{PM} L. Prelli,
{\em Microlocalization of subanalytic sheaves,}
\/M\'emoires Soc. Math. France $\bf{135}$ (2013).
\bibitem{SKK}
M. Sato, T. Kawai and M. Kashiwara,
{\em Microfunctions and pseudo-differential equations} (Proc. Conf., Katata, 1971),
\/Lecture Notes in Math. {\bf 287}, Springer 1973, pp. 265-529.

\bibitem{W} A. J. Wilkie,
{\em Covering definable open sets by open cells} (Proc. RAAG Summer School Lisbon, 2003),
Lecture Notes in Real Algebraic and Analytic Geometry, Cuvillier Verlag 2005.

\end{thebibliography}
\end{document}